\newcommand{\GF}{\mathop{\mathrm{GF}}}
\newcommand{\GL}{\mathop{\mathrm{GL}}}
\newcommand{\SL}{\mathop{\mathrm{SL}}}
\newcommand{\PGL}{\mathop{\mathrm{PGL}}}
\newcommand{\PSL}{\mathop{\mathrm{PSL}}}
\newcommand{\tr}{\mathop{\mathrm{tr}}}
\newtheorem{theorem}{Theorem}[section]
\newtheorem{lemma}[theorem]{Lemma}
\newtheorem{fact}[theorem]{\relax}
\newtheorem{unnumber}{}
\newenvironment{proof}{\prepf\normalfont}{\endprepf}
\newenvironment{remark}{\prem\normalfont}{\endprem}
\renewcommand{\wr}{\mathbin{\mathrm{wr}}}
\begin{document}

\title{Primitive permutation groups of degree $3p$}
\author{by Peter M. Neumann}
\date{}
\maketitle

This paper presents an analysis of primitive permutation groups of degree~$3p$,
where $p$ is a prime number, analogous to H.~Wielandt's treatment~\cite{r19}
of groups of degree~$2p$. It is also intended as an example of the systematic
use of combinatorial methods as surveyed in \S6 for distilling information
about a permutation group from knowledge of the decomposition of its
character. The work is organised into three parts. Part~I contains the
lesser half of the calculation, the determination of the decomposition of the
permutation character. Part~II contains a survey of the combinatorial methods
and, based on these methods, the major part of the calculation. Part~III
ties up loose ends left earlier in the paper and gives a tabulation of detailed
numerical results.

\begin{center}\Large\textbf{Part I}\end{center}

\section{Introduction}

The results can be summarised as follows: if $p$ is prime and $p\ge5$, then 
every primitive permutation group of degree~$3p$ is $2$-fold transitive
unless, for some integer $a$,
\begin{center}
\begin{tabular}{lrlll}
   & (i)   & $p=3a^2+3a+1$    & ($a\ge1$) \\
or & (ii)  & $p=48a^2+30a+5$  & ($a\ge0$) \\
or & (iii) & $p=48a^2+66a+23$ & ($a\ge0$) \\
or & (iv)  & $p=7$, $19$ or $31$. & \\
\end{tabular}
\label{p2}
\end{center}

It will be seen later why cases (i) and (iv) are allowed to overlap.

In cases (ii) and (iii) simply transitive primitive groups must have rank~$3$
(in the sense of D.~G.~Higman~\cite{r9}), and in all cases the rank of our
group is at most~$4$. The calculation gives the lengths of the suborbits, and
other numerical information of a similar kind. Details are tabulated at the
end of the paper.

Simply transitive, primitive groups of degree~$3p$ do exist for $p=5$,
$7$ and $19$:

\paragraph{Example 1.} $A_6$, $S_6$ operating on unordered pairs are 
primitive groups of degree~$15$ and rank~$3$ (case~(ii) with $a=0$).

\paragraph{Example 2.} $A_7$, $S_7$ operating on unordered pairs are
primitive groups of degree~$21$ and rank~$3$ (case~(i) with $a=1$).

\paragraph{Example 3.} $\PGL(2,7)$ operating by conjugation on its
set of Sylow $2$-subgroups is a primitive group of degree~$21$ and rank~$4$,
exemplifying case~(iv).\label{e:ex3}

\paragraph{Example 4.} $\PSL(2,19)$ operating by right multiplication
on the cosets of one of its subgroups $A_5$ is primitive of degree~$57$ and
rank~$4$, again an instance of case~(iv).

\medskip

How these four examples fit into the more detailed scheme of Table~\ref{t52} is
shown in Table 14.1. I have no idea whether there are more. The possibility
$p=31$ in case (iv) is of particular interest, and I hope to show in a later
paper how the combinatorial data collected in this work can be used to decide
whether or not it does actually correspond to a group.

The context of the results is the problem of classifying as far as is possible
the primitive permutation groups of degree~$kp$ where $l$ is small (compared
with~$p$). I had hoped that the methods I use would extend to higher values
of $k$, but even for $k=4$ the complexity of the system of simultaneous
diophantine equations which arises seems to become prohibitive. Nevertheless,
if one is concerned to test only relatively weak hypotheses, then the methods
may just be sufficient, for example,
\begin{quote}
(A) for each value of $k$ ($k\ge2$) there is a finite set $p_{k,1}(a)$, \dots,
$p_{k,f(k)}(a)$ of quadratic polynomials with integer coefficients such that,
if $p\ne p_{k,i}(a)$ for some integers $i,a$, then all primitive groups of
degree $kp$ are $2$-fold transitive;
\end{quote}
or the weaker
\begin{quote}
(B) for each value of $k$ ($j\ge2$) the set $P_k$ defined by
\begin{eqnarray*}
P_k &=& \{p\mid p \hbox{ prime and there exists a simply transitive, primitive}\\
&& \qquad\qquad \hbox{ permutation group of degree $kp$\}}
\end{eqnarray*}
is no more ``dense'' than the set of all perfect squares.
\end{quote}

That (A) is true for $k=2$ is proved by Wielandt~\cite{r19,r20}; and for
$k=3$ it is still quite easy to show: much of the detailed calculation of
\S\S7--10 goes to prove the refined version stated in the first paragraph of
this paper. And even before I had been able to complete the proof of
Theorem 9.2, B.~J.~Birch had shown me how to prove (B) (see \S13).

Of course, this is still a very far cry from the classification of the groups
themselves. Partial results along these lines have been obtained by
Ito~\cite{r12,r13} and Rowlinson~\cite{r15} only for $k=2,3,4$ under very
restrivtive conditions on the normalisers of Sylow $p$-subgroups.

The main part of the calculation is based on the language and notations
introduced in \S6. Little of this is new, except perhaps the point of view,
but it unifies and simplifies various pieves of information scattered
throughout the literature. Apart from this the work contains no new ideas
and the proof is similar to Wielandt's proof for groups of
degree~$2p$~\cite{r19,r20}. The difference lies in the fact that, for groups
of degree~$3p$, even once the decomposition of the permutation character is
known, there is still a great deal of calculation to be done, far heavier
than the corresponding calculation for groups of degree $2p$
(cf.~\cite{r20}, pp.102, 103). I hope that in spite of the weight of the
calculations, the proof will support my thesis that the point of view
proposed in \S6 is a sensible one.

In the unpublished part of his thesis, which he kindly made available to me,
Tamaschke has done a great deal of work on groups of degree~$3p$, but apart
from the initial reductions of the problem his calculations proceed along
quite different lines from mine. L.~L.~Scott~\cite{r16} has also found
partial results -- but I do not have details yet.

It is a pleasure to acknowledge with thanks the help of Mr~J.~E.~Stoy, whose
calculations on the Oxford University computer showed me how to complete the
proof of Theorem~9.2; and of Mr~P.~Rowlinson who read the first draft of this
paper and helped with discussion and informed comment.

\section{Notation}

Throughout the paper $p$ is prime, $G$ is a primitive but not $2$-fold 
transitive permutation group of degree $3p$, $H$ denotes a stabiliser in $G$,
and $P$ a Sylow $p$-subgroup of $G$. The \emph{suborbits} of $G$ are the 
orbits of $H$, the \emph{subdegrees} are the lengths of the suborbits, and
the \emph{rank} $r$ of $G$ is the number of suborbits.

We will identify $G$ with the linear group given by the appropriate permutation
matrices. The field over which this linear group is defined will depend on the
context -- usually it will be implicitly assumed to be a suitable cyclotomic
extension of the rational field $\mathbb{Q}$. The principal character of a
group $X$ will be denoted $1_X$. We use $F$ to denote the cyclotomic field
$\mathbb{Q}(\theta)$, where $\theta$ is a primitive $|G|$-root of $1$. It
will be proved later, and assumed now, that $|G|=p.m$ where $p\nmid m$. Thus
$F$ is generated by a primitive $p$-th root $\theta^m$ of $1$ and a primitive
$m$-th root $\theta^p$. Let $\Gamma$ be the automorphism group of $F$ (over
$\mathbb{Q}$) and $\Gamma^*\le\Gamma$ the subgroup fixing $\theta^p$. Thus
$\Gamma^*$ is cyclic of order $p-1$, and permutes the primitive $p$-th
roots of unity in $F$ regularly. Every complex character of $G$ has its
values in $F$, and so $\Gamma$ operates on this set: if $\chi$ is a
character of $G$ and $\gamma\in\Gamma$ we define $\chi^\gamma$ by
\[\chi^\gamma(g)=\chi(g)\gamma\]
for all $g\in G$. It is well known that $\chi^\gamma$ is again a character.
The character $\chi$ is said to be \emph{$p$-rational} if $\chi^\gamma=\chi$
for all $\gamma\in\Gamma^*$.

The names used for certain special groups are more or less standard:
$S_n$, $A_n$ for the symmetric and alternating groups on the set
$\{1,2,\ldots,n\}$; $\GL(m,q)$, $\SL(m,q)$ for the general and special
(unimodular) linear groups respectively over the field $\GF(q)$ with $q$ 
elements; $\PGL(m,q)$, $\PSL(m,q)$ for the corresponding projective groups.
Other notation and terminology is explained in \S6.

\section{Preliminary lemmas}

Primitive permutation groups of degrees $6$ and $9$ are known. In fact those
of degree~$6$ are all $2$-fold transitive: $S_6$, $A_6$, $\PGL(2,5)$,
$\PSL(2,5)$, the last two operating on the points of the projective line over
$\GF(5)$. If $G$ is primitive of degree~$9$, then by a theorem of 
Jordan~\cite[p.32]{r20}, the only primes dividing $|G|$ are $2$ and $3$.
Consequently $G$ is soluble, and contains a regular elementary abelian normal
subgroup of order~$9$. The stabiliser $H$ is a complement for $T$ in $G$, and,
as it is represented faithfully as a group of automorphisms of $G$, it can be
identified as a subgroup of $\GL(2,3)$. There are just two such primitive
groups which are not $2$-fold transitive: a Frobenius group in which $H$,
the Frobenius complement, is cyclic of order~$4$; and a group in which $H$
is dihedral of order~$8$. Both these groups are of rank $3$ and have
subdegrees $1,4,4$. \textit{From now on we will suppose that $p\ge5$.}

\begin{lemma}
$p^2\nmid |G|$.
\label{l31}
\end{lemma}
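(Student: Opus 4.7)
Let $P$ be a Sylow $p$-subgroup of $G$, and suppose $p^2 \mid |G|$. Since $G$ is transitive on $\Omega$ and $p \mid |\Omega| = 3p$, the $p$-part of a point stabiliser is $|G|_p/p < |P|$, so $P$ fixes no point of $\Omega$. Every $P$-orbit therefore has size a positive power of $p$; as $3p < p^2$ for $p \ge 5$, each such orbit has size exactly $p$, so $P$ has three orbits $\Delta_1, \Delta_2, \Delta_3$. Because $p > 3$, $P$ acts trivially on the three-element set $\{\Delta_1, \Delta_2, \Delta_3\}$, and the restriction of $P$ to each $\Delta_i$ is a transitive $p$-subgroup of $S_p$, hence cyclic of order $p$. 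Thus $P$ embeds in $C_p \times C_p \times C_p$, and $|P| \in \{p^2, p^3\}$.

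Jordan's theorem, applied to the primitive group $G$ of degree $3p \ge p+3$ (which is not $2$-transitive by hypothesis), shows that $G$ contains no $p$-cycle. Viewing $P$ inside $C_p^3$, an element acts as a $p$-cycle precisely when exactly one of its three coordinates is non-zero, so $P$ meets none of the three coordinate axes. This immediately rules out $|P| = p^3$, since $C_p^3$ itself contains such axis elements.

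The case $|P| = p^2$ is the main obstacle. Here the three kernels $K_i = \ker(P \to \mathrm{Sym}(\Delta_i))$ are distinct subgroups of order $p$ in $P$, each fixing $\Delta_i$ pointwise and acting regularly on every other $\Delta_j$; with $\alpha \in \Delta_1$, one has $P \cap H = K_1$, a Sylow $p$-subgroup of $H$. I would then count suborbits modulo $p$: their non-trivial sizes sum to $3p - 1 \equiv -1 \pmod p$, so the suborbits of size prime to $p$ have total size $\equiv -1 \pmod p$, hence at least $p - 1$. Each such ``$p'$-suborbit'' consists of fixed points of some Sylow $p$-subgroup of $H$. The crux is to show that every such fixed-point set coincides with $\Delta_1$; granted this, the $p'$-suborbits partition $\Delta_1 \setminus \{\alpha\}$, so $H$ stabilises $\Delta_1$ setwise. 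Combined with $P \le G_{\Delta_1}$ (since $\Delta_1$ is a $P$-orbit) and $\langle P, H \rangle = G$ (using the maximality of $H$ and $P \not\le H$), this forces $G_{\Delta_1} = G$; transitivity of $G$ then gives $\Delta_1 = \Omega$, contradicting $|\Delta_1| = p$.

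The hardest piece is the fixed-point claim just invoked. Any Sylow $Q'$ of $H$ has order $p$ and lies in some conjugate $gPg^{-1}$; since $Q'$ fixes $\alpha$, it must be an ``axis kernel'' of that conjugate rather than a fixed-point-free diagonal subgroup, so its fixed-point set is the orbit through $\alpha$ of that conjugate Sylow. Identifying all these orbits with $\Delta_1$ amounts to ruling out ``rogue'' Sylow $p$-subgroups of $G$ whose $\alpha$-containing orbit is a $p$-subset of $\Omega$ distinct from $\Delta_1$; this is where the specific embedding $P \le C_p^3$ avoiding coordinate axes, together with primitivity of $G$, must be exploited.
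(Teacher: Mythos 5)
There is a genuine gap, and it sits exactly where you say it does. Your reduction is fine as far as it goes: $p^2\mid|G|$ forces $P$ to be fixed-point-free with three orbits of size $p$, hence $P\hookrightarrow C_p\times C_p\times C_p$, and Jordan's theorem (no $p$-cycles in a primitive, non-$2$-transitive group of degree $3p$) kills $|P|=p^3$. But the case $|P|=p^2$ is not closed, and the ``crux'' you defer is circular as stated: the Sylow $p$-subgroups of $H$ are precisely the $H$-conjugates $K_1^h$, and $\mathrm{Fix}(K_1^h)=\Delta_1^h$, so the claim that every such fixed-point set equals $\Delta_1$ is \emph{literally equivalent} to the conclusion you want to draw from it, namely that $H$ stabilises $\Delta_1$ setwise. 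Nothing in the counting of $p'$-suborbits or in the embedding $P\le C_p^3$ breaks this circle; indeed a $2$-dimensional subspace such as $x+y+z=0$ avoids all three coordinate axes, so no contradiction is available at that level.

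The paper's proof is both shorter and points to the missing ingredient. From $p^2\mid|G|$ one gets $p\mid|H|$, hence an element $g$ of order $p$ with at least one fixed point; $g$ then moves exactly $p$ or exactly $2p$ points, and the theorems of Jordan \emph{and Manning} (Wielandt, p.~39) force $G\supseteq A_{3p}$, contradicting the hypothesis that $G$ is not $2$-fold transitive. You have reconstructed the Jordan half (a single $p$-cycle), but the hard configuration in your $|P|=p^2$ case is exactly an element of order $p$ with two $p$-cycles and $p$ fixed points --- any non-identity element of your kernel $K_1$ is of this form --- and that is precisely what Manning's theorem handles. Without citing Manning (or supplying an equivalent argument), the proof does not go through; with it, the entire Sylow analysis becomes unnecessary.
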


\begin{proof}
If $p^2\mid|G|$ then certainly $p\mid|H|$ and $G$ would contain an element
$g$ of order $p$ having at least one fixed point. Then $g$ has degree $p$
(and $2p$ fixed point) or degree $2p$ (and $p$ fixed points) and, by
theorems of Jordan and Manning (see \cite{r20},~p.39) $G$ would be alternating
or symmetric of degree~$3p$. This contradicts our assumption that $G$ is not
$2$-fold transitive, and proves the lemma: and the Sylow $p$-subgroup $P$
is cyclic of order~$p$.
\end{proof}

Next a triviality about the normal structure of $G$:

\begin{lemma}
$G$ contains no non-trivial abelian normal subgroup.
\label{l32}
\end{lemma}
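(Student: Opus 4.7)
The plan is to exploit the fact that $3p$ has two distinct prime divisors when $p\ge5$, and so is not a prime power. I would argue by contradiction: suppose that $A$ is a non-trivial abelian normal subgroup of $G$, and pick a minimal normal subgroup $B$ of $G$ with $B\le A$; such a $B$ is automatically abelian. The first task is to show that $B$ is elementary abelian. This follows because, for any prime $q$ dividing $|B|$, the set of elements of $B$ of order dividing $q$ is a characteristic, hence $G$-normal, subgroup of $B$, and so by minimality of $B$ it must equal $B$. Therefore $|B|=q^k$ for a single prime $q$ and some $k\ge1$, and $B$ has exponent $q$.

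The second step is to invoke primitivity: a standard lemma states that every non-trivial normal subgroup of a primitive permutation group is transitive, so $B$ acts transitively on the $3p$ points. Since $B$ is abelian, all its point stabilisers coincide, and so each stabiliser equals the kernel of the action, which is trivial. Hence $B$ acts regularly and $|B|=3p$.

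Putting the two steps together forces $q^k=3p$. But $p\ge5$ ensures that $3$ and $p$ are distinct primes both dividing $3p$, so $3p$ is not a prime power. This contradiction proves the lemma. The argument is short and draws only on the two standard facts just recalled, so there is no genuine obstacle; the main point is simply to assemble them in the right order, using the hypothesis $p\ge5$ (established in the paragraph preceding Lemma~\ref{l31}) in an essential way to rule out the possibility $p=3$ that would otherwise permit $|B|=3^2$.
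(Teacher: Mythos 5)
Your proof is correct and follows essentially the same route as the paper: both arguments rest on the facts that a non-trivial normal subgroup of a primitive group is transitive, that a transitive abelian group is regular, and that $3p$ is not a prime power for $p\ge5$. The only cosmetic difference is that the paper applies the primary decomposition directly to the regular abelian normal subgroup (its $p$-primary part is normal in $G$ yet intransitive), whereas you pass to a minimal normal subgroup and use that it is elementary abelian of prime-power order.
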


\begin{proof}
If $K$ were a non-trivial abelian normal subgroup then since $G$ is 
primitive $K$ would be transitive, hence regular. But then the $p$-primary
constituent of $K$ would be normal in $G$ but intransitive, and $G$ coud
not be primitive.
\end{proof}

\begin{remark}
Tamaschke observes in \cite{r18} that in fact any non-trivial normal subgroup
in $G$ is again primitive except in case $p=7$, $G=\PGL(2,7)$ as
in Example~3 (on page~\pageref{e:ex3}). This is a genuine exception, for
$\PSL(2,7)$ operating as a transitive group of degree~$21$ is
imprimitive (it is a group of rank~$6$ and (see below) Type VIII, with
subdegrees $1,2,2,4,4,8$).
\end{remark}

Thirdly, a well-known fact about the permutation character of $G$.

\begin{lemma}
Every non-principal constituent of $\pi$ is faithful.
\label{l34}
\end{lemma}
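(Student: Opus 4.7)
The plan is to let $\chi$ be any non-principal irreducible constituent of $\pi$, set $K=\ker\chi$, and reach a contradiction from the assumption $K\neq1$ by computing the multiplicity of $1_K$ in $\pi|_K$ in two different ways.

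First I would observe that, since $G$ is primitive and faithful on the $3p$ points, any non-trivial normal subgroup $K\trianglelefteq G$ acts transitively. Indeed, the partition of the $3p$ points into $K$-orbits is $G$-invariant, so by primitivity it is either the partition with one block or the partition into singletons; the latter would say $K$ fixes every point, forcing $K=1$ (as $G$ acts faithfully). So if $K\neq 1$ we may assume $K$ is transitive on the $3p$ points.

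Next I would use Burnside's orbit-counting formula (equivalently Frobenius reciprocity applied to $\pi=(1_H)^G$ restricted to $K$): the inner product $\langle 1_K,\pi|_K\rangle$ equals the number of $K$-orbits on the $3p$ points, which under our assumption is~$1$. On the other hand, write
\[\pi = 1_G + \chi + \sum_j m_j\,\chi_j\]
with the $\chi_j$ the remaining non-principal irreducible constituents. Restricting to $K$, the first term gives $1_K$, contributing~$1$ to the multiplicity of $1_K$. Since by assumption $K\subseteq\ker\chi$, the character $\chi$ is constant on $K$ with value $\chi(1)$, so $\chi|_K=\chi(1)\cdot 1_K$ and this contributes an additional $\chi(1)\ge1$. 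All other $m_j\chi_j|_K$ are characters of $K$ and contribute non-negative multiplicities of $1_K$. Thus $\langle 1_K,\pi|_K\rangle \ge 1+\chi(1)\ge 2$, contradicting the value~$1$ just computed.

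Hence no non-trivial normal subgroup of $G$ is contained in $\ker\chi$, i.e., $\ker\chi=1$ and $\chi$ is faithful. There is no serious obstacle here; the only subtlety is recognising that primitivity is needed precisely to guarantee that $\ker\chi$, if non-trivial, is \emph{transitive}, which in turn is what makes the multiplicity of $1_K$ in $\pi|_K$ equal to~$1$ rather than to the (larger) number of $K$-orbits on the underlying set.
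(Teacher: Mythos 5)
Your proof is correct and is essentially the paper's own argument: both compute the multiplicity of $1_K$ in $\pi|_K$ in two ways (orbit count versus the contribution of $(1_G+\chi)|_K$, which is at least $2$) and invoke primitivity to relate the $K$-orbits to blocks. The only difference is ordering — the paper deduces that $K$ is intransitive and then that its orbits are singletons, while you assume transitivity first and derive a contradiction — which is an immaterial rearrangement.
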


\begin{proof} Let $\chi$ e a non-principal character of $G$ which appears as
a constituent of $\pi$, and let $K$ be the kernel of $\chi$. Then certainly
$K$ is a proper normal subgroup of $G$. The restriction $\pi|_K$, which is the
appropriate permutation character of $K$, contains $(1+\chi)|_K$ which is a
proper multiple of the principal character of $K$. Consequently $K$ is
intransitive on $\Omega$, and since $K$ is normal, the orbits of $K$ form a
system of blocks for $G$. As $G$ is primitive each of these blocks contains
just $1$ element, and so $K=1$. That is, $\chi$ is a faithful character.
\end{proof}

\section{A theorem of Feit and the groups $\PSL(2,q)$}

Our aim in this section is to prove the crucial

\begin{lemma}
If $f$ is the degree of a non-principal irreducible constituent of $\pi$,
then $f\ge p-1$.
\label{l41}
\end{lemma}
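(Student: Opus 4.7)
The plan is to study the restriction of $\chi$ to the Sylow $p$-subgroup $P$, which by Lemma~\ref{l31} is cyclic of prime order $p$. Label the irreducible characters of $P$ as $\psi_0,\psi_1,\ldots,\psi_{p-1}$ so that $\psi_j(x)=\zeta^j$ for a fixed generator $x$ of $P$ and a primitive $p$-th root of unity $\zeta$, and write $\chi|_P = a\cdot 1_P + \sum_{j=1}^{p-1} b_j\psi_j$ with non-negative integers $a$ and $b_j$. By Lemma~\ref{l34} the constituent $\chi$ is faithful on $G$, hence on $P$, so at least one $b_j$ is positive; moreover $f = \chi(1) = a + \sum_{j\ge1} b_j$.

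The easy case is when $\chi$ is $p$-rational. Then $\chi|_P$ is fixed by $\Gamma^*$, and since $\Gamma^*$ permutes $\{\psi_1,\ldots,\psi_{p-1}\}$ regularly, a short calculation with $\mathbb{Q}$-linear (in)dependence of $p$-th roots of unity forces $b_1=b_2=\cdots=b_{p-1}=b$, say. Then $b\ge1$ by faithfulness, and $f\ge b(p-1)\ge p-1$, as required.

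The substantive case is when $\chi$ is \emph{not} $p$-rational, so the $\Gamma^*$-orbit of $\chi$ has some size $s\ge2$ with $s\mid p-1$. The same orbit-averaging argument now yields only $f\ge(p-1)/s$, which is too weak. This is where the theorem of Feit alluded to in the section title enters: for a finite group with a cyclic Sylow $p$-subgroup of order $p$, a faithful irreducible character of degree strictly less than $p-1$ imposes very stringent structural constraints on $G$, forcing the presence of a section of the form $\PSL(2,q)$ (or one of a few closely related groups) with $q$ subject to a congruence modulo $p$. I would quote this theorem, enumerate the resulting possibilities, and argue that each is incompatible with the running hypotheses on $G$: primitivity of degree $3p$, failure of $2$-transitivity, and absence of non-trivial abelian normal subgroups (Lemma~\ref{l32}).

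The principal obstacle is precisely this elimination step. Examples~3 and~4 show that $\PGL(2,7)$ and $\PSL(2,19)$ really do occur as primitive groups of the relevant degree $3p$, so the $\PSL(2,q)$-sections produced by Feit's theorem are not vacuous; what has to be checked is the more delicate statement that, even in those realisations, no constituent of the permutation character $\pi$ can itself have degree below $p-1$ and remain faithful. Handling the Brauer-tree degree congruences $f\equiv\pm1\pmod p$ coming from cyclic-defect block theory in tandem with the identity $\pi(1)=3p$ will be the main bookkeeping burden, and is what turns what looks like a two-line restriction argument into a genuine piece of character-theoretic work.
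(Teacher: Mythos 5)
Your skeleton is the right one --- Feit's theorem is indeed the engine, and your observation that the $p$-rational case follows from an elementary Galois argument on $\chi|_P$ is correct (though the paper does not need to separate that case out). But there are two genuine problems. First, the version of Feit's theorem you quote is too weak to close the argument. The theorem as used in the paper says: a finite group with trivial centre possessing a faithful character of degree less than $p-1$ either has a normal Sylow $p$-subgroup or is \emph{isomorphic to} $\PSL(2,p)$ or to $\SL(2,q)$ with $q$ a power of $2$ and $p=q+1$. Lemma~\ref{l32} supplies both the trivial centre and the non-normality of $P$ (a normal $P$ would be a non-trivial abelian normal subgroup), and Lemma~\ref{l34} supplies faithfulness, so the conclusion pins down $G$ itself to two explicit families. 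Your ``section of the form $\PSL(2,q)$'' version leaves an infinite and unmanageable class of candidate groups, and in particular manufactures the spurious worry about $\PGL(2,7)$: that group is not on Feit's list and never has to be considered in this lemma.

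Second, the elimination step, which you rightly identify as the crux, is precisely where all the work of the paper's \S4 lies, and you have not carried it out. The check is finite and concrete once $G$ is identified: for $\SL(2,q)$, $q=2^k$, one shows the stabiliser must be the unique index-$3$ subgroup of the normaliser of a Sylow $2$-subgroup, so there is exactly one transitive action of degree $3p$ and it is imprimitive --- contradicting the primitivity of $G$ outright. For $\PSL(2,p)$ one runs through the list of subgroups of $\PSL(2,p)$ to find that an index-$3p$ subgroup forces $p\in\{5,7,19\}$, that only $p=19$ (stabiliser $A_5$) gives a primitive action, and that there the permutation character decomposes as $1+18+18+20$, with no constituent of degree below $p-1=18$. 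This last computation is done directly from the character table of $\PSL(2,19)$; the Brauer-tree congruences you propose are not needed. As it stands your proposal is a correct plan with the decisive case analysis still owed.
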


This depends on a deep theorem of Feit~\cite{r3} who proved that a finite group
whose centre is trivial and which has a faithful character of degree less
than $p-1$ either has a normal Sylow $p$-subgroup or is isomorphic to
$\PSL(2,p)$ or to $\SL(2,q)$ where $q$ is a power of $2$
and $p=q+1$. In case $\pi$ did contain a non-principal constituent of degree
less than $p-1$, by Lemmas~\ref{l32} and~\ref{l34} Feit's theorem would be
applicable, and since by (\ref{l32}) our Sylow $p$-subgroup is not normal,
we would have either $G=\PSL(2,p)$ or $G=\SL(2,q)$. The proof
is therefore completed by the following two lemmas.

\begin{lemma}
If $q$ is a power of $2$, $p=q+1$ and $p\ge5$, then $\SL(2,q)$
(i.e.~$\PSL(2,q)$) has just one representation as a transitive
permutation group of degree $3p$, but this is imprimitive.
\label{l42}
\end{lemma}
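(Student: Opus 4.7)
Since $q$ is even, $\SL(2,q) = \PSL(2,q)$; denote this group by $G$, of order $q(q-1)(q+1) = (p-2)(p-1)p$. A transitive permutation representation of $G$ of degree $3p$ corresponds to a conjugacy class of subgroups $H \le G$ of order $q(q-1)/3$. Since $q$ is a power of $2$ and $p = q+1$ is prime with $p \ge 5$ (so $3 \nmid q$ and $3 \nmid p$), integrality of this order forces $3 \mid q-1$. The plan is to show that $H$ lies in a point stabiliser $B$ for the natural $2$-transitive action of $G$ on the $p$ points of the projective line $\Lambda$ over $\GF(q)$, and then to pin down $H$ uniquely inside $B$.

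For the first reduction, the $2$-part of $|H|$ equals $q$, the full Sylow $2$-order of $G$, so $H$ contains some Sylow $2$-subgroup $U_0$ of $G$. On $\Lambda$, $U_0$ fixes a unique point $\alpha$ and acts regularly on $\Lambda \setminus \{\alpha\}$. If $H$ did not stabilise $\alpha$, some $g \in H$ would send $\alpha$ to a point $\beta \ne \alpha$; then $U_0 \cdot \beta = \Lambda \setminus \{\alpha\}$ would give $H \cdot \alpha = \Lambda$, making $H$ transitive on $\Lambda$ and forcing $p \mid |H|$. But $\gcd(p, q(q-1)) = 1$, since $\gcd(q+1, q) = 1$ and $\gcd(q+1, q-1) = \gcd(q+1, 2) = 1$ (as $q+1$ is odd). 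This contradiction yields $H \le B := G_\alpha$, a subgroup of order $q(q-1)$.

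Writing $B = U \rtimes T$ with $U = U_0$ elementary abelian of order $q$ and $T$ cyclic of order $q-1$, $U$ is the unique Sylow $2$-subgroup of $B$, so any subgroup of $B$ of order $q(q-1)/3$ must contain $U$; its quotient by $U$ is then the unique subgroup $T' \le T$ of index $3$, giving $H = UT'$. Since all point stabilisers on $\Lambda$ are $G$-conjugate, $H$ is unique up to $G$-conjugacy, producing the single transitive representation of degree $3p$. Imprimitivity is immediate from the proper chain $H < B < G$ with $[B:H] = 3$ and $[G:B] = p$: the cosets of $B$ within $G/H$ form a system of $p$ blocks of size $3$. The main obstacle is the first reduction $H \le B$, which I handle by combining the regularity of $U_0$ on $\Lambda \setminus \{\alpha\}$ with the coprimality $\gcd(p, |H|) = 1$; once this is in hand, the rest is a routine count inside the point stabiliser.
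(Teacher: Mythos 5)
Your proof is correct and takes essentially the same route as the paper's: the odd index forces $H$ to contain a Sylow $2$-subgroup $U$, one then shows $H\le N(U)$ (your point stabiliser $B$ is exactly $N(U)$, of index $p$), and the unique subgroup of index $3$ in $U\rtimes C_{q-1}$ pins $H$ down, with imprimitivity read off from $H<N(U)<G$. The only difference is cosmetic: where you get $H\le B$ from the regular action of $U$ on the remaining projective points together with $p\nmid|H|$, the paper uses the conjugation action of $U$ on the remaining Sylow $2$-subgroups (the same $G$-set in different clothing) to conclude that otherwise $H$ would contain every Sylow $2$-subgroup and hence be all of $G$.
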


\paragraph{N.B.} In fact the permutation character is $1+2\chi_1+\chi_2$ 
where $\chi_1$ is irreducible of degree~$p$, $\chi_2$ is irreducible of
degree~$p-1$. Thus, even as an imprimitive group this is not a
counter-example to Lemma~\ref{l41} (see \S11).

\begin{proof}
Let $H$ be the stabilizer in a representation of $\SL(2,q)$ as a
transitive group of degree~$3p$. Since the index of $H$ is odd, $H$ contains
a Sylow $2$-subgroup $U$ of $\SL(2,q)$. If $U\ntriangleleft H$ then $H$
contains at least one further Sylow $2$-subgroup, but since $U$ permutes the
other Sylow $2$-subgroups of $\SL(2,q)$ transitively by conjugation, it would
follow that $H$ contained all Sylow $2$-subgroups and would be the whole
group. Hence $U\lhd H$, and $H\le N(U)$. Now $N(U)$ has index $p$ in 
$\SL(2,q)$ and so $|N(U):H|=3$. In fact, $N(U)$ is the split extension of $U$
by a cyclic group of order $q-1$. SInce $q$ and $q+1$ are not multiples
of $3$ it follows that $3$ does divide $p-1$, therefore $N(U)$ contains
precisely one subgroup of index~$3$, and the lemma is proved.
\end{proof}

\begin{lemma}
If $\PSL(2,p)$ has a representation as a transitive permutation group of
degree~$3p$ then $p$ is $5$, $7$ or $19$. In case $p$ is $5$ or $7$ the
groups are imprimitive. $\PSL(2,19)$ can be represented in two (similar)
ways as a primitive group of degree $3.19$; the stabilizer is isomorphic
to $A_5$; the rank is $4$; the subdegrees are $1$, $6$, $20$, $30$; the
degrees\footnote{This group is a counterexample of rank~$4$ to a conjecture
of Frame, conjecture (B) on p.89 of \cite{r20}; it gives a negative answer
to the implicit question raised in \cite{r20}, p.93, 11.3,4. Compare the
footnote on p.\pageref{p46}, and Ito~\cite{r14}.} of the irreducible
constituents of the permutation character are $1$, $18$, $18$, $20$.
\label{l43}
\end{lemma}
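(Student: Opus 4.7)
The plan is to convert a transitive action of $G=\PSL(2,p)$ of degree $3p$ into the choice of a subgroup $H\le G$ of order $(p^2-1)/6$, and then to apply Dickson's classification of the subgroups of $\PSL(2,p)$. Up to conjugacy any such $H$ is contained in a Borel subgroup $C_p\rtimes C_{(p-1)/2}$, in a dihedral group of order dividing $p-1$ or $p+1$, or is one of $A_4$, $S_4$, $A_5$ (the last two subject to the usual congruence conditions on $p$ modulo $8$ and $5$ respectively). Since $p\nmid(p^2-1)/6$, the Borel case reduces $H$ to a cyclic group of order dividing $(p-1)/2$, which is too small for $p\ge5$; non-split cyclic subgroups are bounded by $(p+1)/2$ and are also too small.

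If $H$ is dihedral then $(p^2-1)/6$ divides $p-1$ or $p+1$, forcing $(p+1)/6\le1$ or $(p-1)/6\le1$, and hence $p\in\{5,7\}$: this gives $H\cong V_4$ in $\PSL(2,5)=A_5$ (the dihedral subgroup of order $p-1=4$) and $H\cong D_8$ in $\PSL(2,7)$ (the dihedral subgroup of order $p+1=8$). If $H$ lies in a copy of $A_4$, $S_4$, or $A_5$ then $|H|\le60$, so $(p^2-1)/6\le60$ and $p\le19$; a check of the primes $5,7,11,13,17,19$ shows the only essentially new possibility is $p=19$, where $|H|=60$ forces $H\cong A_5$, which does embed in $\PSL(2,19)$ because $19\equiv-1\pmod5$.

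For primitivity I need $H$ to be maximal in $G$. When $p=5,7$ the subgroup $H$ sits properly inside $A_4\le A_5$ or inside $S_4\le\PSL(2,7)$ (using $7\equiv-1\pmod8$), so the action is imprimitive in both cases. When $p=19$ the subgroup $A_5$ is maximal in $\PSL(2,19)$: its order exceeds those of the dihedral and $A_4$ candidates, and it is nonsolvable, so it cannot lie in the Borel. By Dickson's theorem there are exactly two $G$-conjugacy classes of such $A_5$'s, fused by the outer automorphism induced from $\PGL(2,19)$; this accounts for the two similar primitive representations.

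For the rank, subdegrees, and character decomposition when $p=19$, I would enumerate the orbits of $H=A_5$ on $G/H$ by examining the intersections $H\cap{}^gH$ for double-coset representatives $g\in G$. Expecting intersections of orders $60,10,3,2$ (an $A_5$, a dihedral $D_{10}$, a $C_3$, and a $C_2$), the orbit lengths $|H|/|H\cap{}^gH|$ are then $1,6,20,30$, summing to $57$ and yielding rank $4$. The decomposition of $\pi=\mathrm{Ind}_H^G 1_H$ follows from Frobenius reciprocity, $\langle\pi,\chi\rangle_G=\langle\chi|_{A_5},1_{A_5}\rangle_{A_5}$, by restricting each irreducible character of $\PSL(2,19)$ to $A_5$ and counting trivial constituents; only four irreducibles of $G$ contribute, with degrees $1,18,18,20$. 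I expect the main obstacle to be the concrete double-coset calculation inside $\PSL(2,19)$, or equivalently the analysis of how two $G$-conjugate $A_5$'s intersect; once that is in hand, the character-theoretic half is essentially mechanical.
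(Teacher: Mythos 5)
Your proposal is correct and follows the paper's main line: both arguments reduce the problem to finding a subgroup $H$ of order $(p^2-1)/6$ in Dickson's (Burnside's) list of subgroups of $\PSL(2,p)$, eliminate the cyclic and dihedral cases by the same order arithmetic to leave only $p=5,7,19$, observe that the stabilizers for $p=5,7$ are non-maximal, and identify the two fused classes of maximal $A_5$'s in $\PSL(2,19)$ via $19\equiv-1\pmod 5$. The one genuine divergence is at the end, in extracting the numerical data for $p=19$: the paper first reads off the constituent degrees $1,18,18,20$ from the character table of $\PSL(2,19)$, concludes the rank is $4$, and then \emph{derives} the subdegrees $1,6,20,30$ from the eigenvalue machinery of \S10, whereas you compute the subdegrees directly from the double cosets (the intersections $H\cap{}^gH$ of orders $60,10,3,2$) and recover the character decomposition afterwards by Frobenius reciprocity. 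Your route is self-contained at the level of the group but requires an explicit double-coset analysis inside $\PSL(2,19)$; the paper's route avoids that by leaning on the trace relations it has already set up, which is in keeping with the theme of the whole work. Both give the stated conclusions.
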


\begin{proof}
We take $G$ to be $\PSL(2,p)$. and hunt for the subgroup $H$ of index~$3p$
by comparing with the list of all subgroups in $\PSL(2,p)$ given, for
example, in Burnside~\cite{r1}. We have the possibilities
\begin{itemize}
\item[(i)] $H$ is contained in a dihedral group of order $p-1$;
\item[or (ii)] $H$ is contained in a dihedral subgroup of order $p+1$;
\item[or (iii)] $H$ is contained in a maximal subgroup isomorphic to $A_4$,
$S_4$ or $A_5$.
\end{itemize}
Possibility (i) gives that $\frac{1}{2}p(p+1)$ divides $3p$, whence $p=5$;
case (ii) can only arise if $\frac{1}{2}p(p-1)$ divides $3p$, that is, if
$p=7$; and in case (iii), if $H$ is a proper subgroup of the relevant maximal
subgroup, then either $H$ is isomorphic to $A_4$ or it is aready covered by
cases (i) and (ii). Thus we may assume that $H$ has order $12$, $24$ or $60$.
These give
\begin{eqnarray*}
3p.12 &=& \frac{1}{2}p(p^2-1)\\
3p.24 &=& \frac{1}{2}p(p^2-1)\\
3p.60 &=& \frac{1}{2}p(p^2-1)\\
\end{eqnarray*}
respectively. Of these only the last is possible and it gives $p=19$.

Since $19\equiv-1\pmod{5}$, $\PSL(2,19)$ contains two conjugate classes of
maximal subgroups isomorphic to $A_5$ (cf.~Burnside~\cite{r1}, \S324),
and therefore $\PSL(2,19)$ has two representations as a primitive group of
degree $3.19$. The two classes of $A_5$-subgroups are interchanged by the
outer automorphism of $\PSL(2,19)$, and our two primitive groups of degree
$3.19$ are therefore similar.

A straightforward calculation from the character table of $\PSL(2,19)$ yields
that the degrees of the irreducible constituents of the permutation character
are $1$, $18$, $18$, $20$. Knowing that the rank therefore is $4$ and
knowing that the stabilizer is $A_5$, it is easy to compute that the
subdegrees must be $1$, $6$, $20$, $30$ (compare \S10). In any case, since
there is no constituent of degree less than $p-1$ in the permutation
character of this group, it follows that Lemma~\ref{l41} is correct.
\end{proof}

\paragraph{N.B.} Even as imprimitive groups of degrees $15$, $21$ respectively
$\PSL(2,5)$ and $\PSL(2,7)$ are not counterexamples to Lemma~\ref{l41}.
For in the case of $\PSL(2,5)$ one finds that $\pi=1+2\chi_1+\chi_2$ where
$\chi_1$ is irreducible of degree~$5$. $\chi_2$ irreducible of degree~$4$;
and in the case of $\PSL(2,7)$, $\pi=1+\chi_1+2\chi_2$ where $\chi_1$ is
irreducible of degree~$8$, $\chi_2$ irreducible of degree~$6$.

\section{Reduction of the permutation character of $G$}

Suppose that $\pi=1_G+\sum e_i\chi_i$ where the characters $\chi_i$ are
distinct non-principal irreducible characters of $G$, Since $\pi|_P=3\rho_P$,
where $\rho_P$ is the character of the regular representation of $P$, we have
\[\left(\sum e_i\chi_i\right)|_P=2.1_P+3\sum_{\lambda\in\Lambda^*}\lambda\]
where $\Lambda^*$ is the set of non-principal linear characters of $P$. We may
assume the numbering to be chosen so that, for $1\le i\le t$, $\chi_i|_P$
contains $1_P$, and for $i>t$, $\chi_i|_P$ does not contain $1_P$. Clearly
then $t\le2$. The operation of the Galois group $\Gamma$ leaves $\pi$
invariant and therefore permutes the irreducible constituents of $\pi$:
the characters $\chi_i$, $i\le t$, are permuted among themselves, and the
characters $\chi_i$, $i>t$, are permuted among themselves. Thus if
$\chi_a=\sum_{i\le t}e_i\chi_i$ and $\chi_b=\sum_{i>t}e_i\chi_i$ then
$\chi_a$ is a rational character and $\chi_b$ is either $0$ or a rational
character of $G$. Furthermore $\chi_a$ is faithful (Lemma~\ref{l34}) and
so $\chi_a|_P$ contains the non-principal linear characters of $P$ each at
least once and all with the same multiplicity. Similarly, $\chi_b$ is either
$0$, or of degree~$p-1$, or of degree~$2(p-1)$. We take these possibilities
in turn.

Suppose first that $\chi_b=0$. Since $G$ is not $2$-fold transitive $\chi_a$
cannot be irreducible, and therefore is the sum of two irreducible
constituents, $\chi_1$ and $\chi_2$. (We know that $\chi_1\ne\chi_2$ because
$\chi_1|_P$ is not the double of any character of $P$.) If $\chi_1$ is not
$p$-rational then $\chi_1$ and $\chi_2$ must be interchanged by $\Gamma^*$.
In this case therefore $\chi_1$ and $\chi_2$ have the same degree, 
$(3p-1)/2$, and if $\Gamma^{**}$ denotes the (unique) subgroup of index~$2$
in $\Gamma^*$, then
\[\chi_i|_P=\rho_P+\sum\{\lambda\mid\lambda\in\Lambda_i^*\}\]
where $\Lambda_1^*$, $\Lambda_2^*$ are the two orbits of $\Gamma^{**}$
operating on $\Lambda^*$. This is \textit{case I}. If $\chi_1$ is $p$-rational
then so is $\chi_2$, and by suitable choice of the labels we have 
\textit{case II}\/: $\deg\chi_1=p$, $\deg\chi_2=2p-1$.

Next suppose that $\deg\chi_b=p-1$. By Lemma~\ref{l41} the constituents of
$\chi_1$ have degree at least $p-1$, and in fact it is clear that $\chi_1$
cannot have a constituent of degree~$p-1$. Thus there are just three 
possibilities: either $\chi_a$ is irreducible, or $\chi_1=\chi_1+\chi_2$
where $\chi_1=\chi_2$ and $\chi_1,\chi_2$ both have degree~$p$, or
$\chi_1=2\chi_1$ where $\chi_1$ has degree $p$. These are \textit{cases III,
IV, V} respectively.

Finally, if $\deg\chi_b=2(p-1)$, then $\chi_a$ has degree~$p+1$ and,
by Lemma~\ref{l41}, must be irreducible. Again by \ref{l41}, $\chi_b$ has at
most two irreducible constituents, and if it does have two, then they both
have degree~$p-1$. The three possibilities, $\chi_b$ irreducible of degree
$2(p-1)$, $\chi_b$ the sum of two distinct irreducible characters each of
degree~$p-1$, $\chi_b$ twice an irreducible character of degree~$p-1$, are
\textit{cases VI, VII, VIII} respectively in Table~\ref{t52}.

Further information about the irreducible characters which appear in the
decomposition of $\pi$, for example that even in cases IV and VII the
characters of degree $p$, $p-1$ respectively are $p$-rational, can be obtained
directly from Suzuki's theory of exceptional characters or (equivalently in
this case) from Brauer's theory of modular characters. However, I have not
been able to make use of such facts (though for groups of degree~$4p$ the
analogous information does seem to be useful -- see \cite{r15}), and details
are left to the reader. Only in case I do we need a small amount of additional
information:

\begin{lemma}
Suppose that $\pi=1+\chi_1+\chi_2$ where $\chi_1$ and $\chi_2$ are
irreducible characters of degree $(3p-1)/2$. Then
\begin{itemize}
\item[(i)] $\overline{\chi_1}=\chi_2$ if $p\equiv3\pmod{4}$;
\item[(ii)] $\overline{\chi_1}=\chi_1$ if $p\equiv1\pmod{4}$.
\end{itemize}
\label{l51}
\end{lemma}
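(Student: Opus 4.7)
The plan is to exploit the fact that in Case~I each $\chi_i$ is uniquely determined within the pair $\{\chi_1,\chi_2\}$ by its restriction to $P$, which the preceding discussion has already pinned down:
\[\chi_i|_P = \rho_P + \sum_{\lambda \in \Lambda_i^*} \lambda,\]
where $\Lambda_1^*$ and $\Lambda_2^*$ are the two distinct orbits of $\Gamma^{**}$ on $\Lambda^*$. Since $\pi = 1_G + \chi_1 + \chi_2$ is a rational character, complex conjugation $\sigma \in \Gamma$ (i.e.\ the element of $\Gamma$ induced by $z\mapsto\bar z$ on $F\subset\mathbb{C}$) permutes $\{\chi_1,\chi_2\}$, so $\overline{\chi_1}\in\{\chi_1,\chi_2\}$ and it suffices to compute $\overline{\chi_1}|_P$ and compare.

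Since restriction commutes with the Galois action, I would write
\[\overline{\chi_1}|_P = \overline{\chi_1|_P} = \rho_P + \sum_{\lambda\in\Lambda_1^*}\bar\lambda.\]
For a linear character $\lambda$ of $P$ one has $\bar\lambda = \lambda^{-1}$, so under the standard identification $\Lambda^* \leftrightarrow (\mathbb{Z}/p\mathbb{Z})^*$ (send $\lambda_k\colon g\mapsto(\theta^m)^k$ to $k$ for a fixed generator $g$ of $P$) conjugation acts as multiplication by $-1$. The key structural observation is that under this same identification $\Gamma^{**}$, being the unique index-$2$ subgroup of the cyclic group $\Gamma^*\cong(\mathbb{Z}/p\mathbb{Z})^*$, is precisely the subgroup of quadratic residues, and $\Lambda_1^*,\Lambda_2^*$ are its two cosets (the QR and QNR classes).

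The classical fact that $-1$ is a square modulo~$p$ iff $p\equiv 1\pmod 4$ then finishes things at once: if $p\equiv1\pmod4$ then multiplication by $-1$ preserves each coset, so $\overline{\chi_1}|_P=\chi_1|_P$ and hence $\overline{\chi_1}=\chi_1$; if $p\equiv3\pmod 4$ then multiplication by $-1$ interchanges the two cosets, so $\overline{\chi_1}|_P=\chi_2|_P$ and hence $\overline{\chi_1}=\chi_2$. There is no serious obstacle; the only point requiring any care is the identification of $\Gamma^{**}$ with the quadratic residues, which is immediate from the uniqueness of the index-$2$ subgroup of a cyclic group.
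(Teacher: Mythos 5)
Your proof is correct and is essentially the paper's argument: both reduce the question to whether inversion (the action of complex conjugation on the $p$-th roots of unity) lies in $\Gamma^{**}$, the unique index-$2$ subgroup of the cyclic group $\Gamma^*$ of order $p-1$, which holds precisely when $p\equiv1\pmod4$. Your explicit detour through the restrictions $\chi_i|_P$ and the quadratic-residue description of $\Gamma^{**}$ just spells out why complex conjugation acts on $\{\chi_1,\chi_2\}$ as the element of order $2$ in $\Gamma^*$, which the paper asserts more briefly.
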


\begin{proof}
Complex conjugation operates on $\{\chi_1,\chi_2\}$ in the same way as the
(unique) element of order~$2$ in $\Gamma^*$. Since $\Gamma^*$ has order
$p-1$, if $p\equiv3\pmod{4}$ then $\Gamma^{**}$ has odd order and complex
conjugation interchanges $\chi_1$ and $\chi_2$; while if $p\equiv1\pmod{4}$
then $\Gamma^{**}$ contains the element of order~$2$ in $\Gamma^*$ and
conjugation leaves $\chi_1$ and $\chi_2$ unchanged.
\end{proof}

The entries in the main part of the following table are the degrees $f_i$ of
the irreducible constituents $\chi_i$ of $\pi$. All multiplicities $e_i$
are $1$ except in the two cases shown. $\chi_0$ is the principal character.

\setcounter{table}{1}
\begin{table}[htbp]
\[\begin{array}{|cccccc|}
\hline
\hbox{Type} & r & \chi_0 & \chi_1 & \chi_2 & \chi_3 \\
\hline
\hbox{I} & 3 & 1 & (3p-1)/2 & (3p-1)/2 & \\
\hbox{II} & 3 & 1 & p & 2p-1 & \\
\hbox{III} & 3 & 1 & 2p & p-1 & \\
\hbox{IV} & 4 & 1 & p & p & p-1 \\
\hbox{V} & 6 & 1 & p & p-1 & \\
&&&\hbox{(mult.~$2$)} & & \\
\hbox{VI} & 3 & 1 & p+1 & 2p-2 & \\
\hbox{VII} & 4 & 1 & p+1 & p-1 & p-1 \\
\hbox{VIII} & 6 & 1 & p+1 & p-1 & \\
&&&&\hbox{(mult.~$2$)} & \\
\hline
\end{array}\]
\caption{\label{t52}the decomposition of $\pi$}
\end{table}

The second part of this paper is devoted to showing that cases I, V, VI, VIII
cannot arise -- thus, in particular, the rank of $G$ is at most $4$; that
case II corresponds to possibilities (ii), (iii) of page~\pageref{p2}; that
cases III, IV give possibility (i) of page~\pageref{p2}; and that case VII
cannot arise unless $p$ is $7$, $19$ or $31$.

\begin{center}\Large\textbf{Part II}\end{center}

\section{Survey of combinatorial methods}

Suppose now for convenience that $\Omega$, the set on which $G$ operates, is
\hfil\break
$\{0,1,\ldots,n-1\}$ (in our case $n=3p$), and that $H=G_0$. If
$\Delta_0,\Delta_1,\ldots,\Delta_{r-1}$ are the orbits of $G$ operating in
the natural way on $\Omega\times\Omega$, we define the corresponding
\emph{basic adjacency matrices} $B_0,\ldots,B_{r-1}$ by
\[\left((B_i)_{\alpha\beta}\right)=
\begin{cases}
1 & \hbox{if }(\alpha,\beta)\in\Delta_i\\
0 & \hbox{if }(\alpha,\beta)\notin\Delta_i\\
\end{cases}.\]
Then (cf.~Wielandt~\cite{r20}, p.80) the matrices $B_0,\ldots,B_{r-1}$ span
the algebra $V$ of all matrices which commute with the permutation matrices
representing $G$. The orbits $\Delta_i$ correspond in a natural way to the
orbits of $H$ in $\Omega$; the length of the corresponding $H$-orbit can be
read off as the number of entries equal to $1$ in each row (or column) of
$B_i$, that is, the subdegree is the row (or column) sum of $B_i$; the matrix
$B_i$ is the transpose of $B_j$ if and only if the corresponding orbits are
paired (\cite{r20} p.83), and in particular, self-paired orbits correspond to
symmetric basic adjacency matrices.

Since $G$ is transitive on $\Omega$ the diagonl is one $G$-orbit in
$\Omega\times\Omega$, and if we choose the numbering correctly we can take it
to be $\Delta_0$: the corresponding suborbit is $\{0\}$, and $B_0$ is the
identity matrix. We write $\Delta^*$ for the orbit of $G$ in 
$\Omega\times\Omega$ obtained by reversing all the pairs in $\Delta$:
\[\Delta^*=\{(\alpha,\beta)\mid(\beta,\alpha)\in\Omega\}.\]
Then $\Delta^*$ and $\Delta$ correspond to paired suborbits of $G$ in $\Omega$.
We will also use $*$ to denote an involution of $\{1,\ldots,r-1\}$ in such
a way that
\[\Delta_i^*=\Delta_{i^*}.\]
Thus $B_{i^*}$ is the transpose of $B_i$.

There is a convenient and suggestive geometrical interpretation of the orbits
$\Delta_i$. As Sims has pointed out~\cite{r17}, if $i\ne0$, the orbit
$\Delta_i$ is a directed graph with vertex set $\Omega$, and $G$ is a group
of graph automorphisms. As a graph $\Delta_i$ has the special property that
every point has edges emanating from it, and furthermore $G$ is transitive
on the directed edges. We will say that $\Delta_i$ admits $G$ as a
\emph{flag-transitive} automorphism group. Since we are interested not only
in each individual suborbit, that is, each individual graph, but also in the 
way that these graphs fit together to form the complete directed graph
$(\Omega\times\Omega)-\Delta_0$, it is appropriate to use the notions of
edge-coloured graphs. We choose colours $c_1,\ldots,c_{r-1}$ and colour the
edge $(\alpha,\beta)$ with colour $c_i$ if and only if
$(\alpha,\beta)\in\Delta_i$, to make a \emph{coloured directed graph}
$C(G,\Omega)$. This ensures of course that the \emph{monochrome subgraphs}
-- i.e.\ the subgraphs defined by all the vertices and \emph{all} the edges
of one given colour -- are just the original graphs
$\Delta_1,\ldots,\Delta_{r-1}$. As an edge-coloured directed graph
$C(G,\Omega)$ has several special properties. For example, its automorphism
group contains $G$ and is therefore transitive on the flags of each colour;
and if $\Delta$ is a monochrome subgraph, then so is $\Delta^*$, the graph
obtained by reversing all edges of $\Delta$. If all the suborbits of $G$
are self-paired then each monochrome subgraph is completely determined by
the underlying non-directed graph, and we have an edge-coloured complete
graph in the usual sense (see, for example, \cite{r8}, ch.~6).

The basic adjacency matrices $B_1,\ldots,B_{r-1}$ are the adjacency matrices
in the usual sense for the monochrome subgraphs of $C(G,\Omega)$. Products of
these matrices enumerate paths in $C(G,\Omega)$ in the following way:
\[B_{i_1}B_{i_2}\cdots B_{i_k}=(m_{\alpha\beta})\]
where $m_{\alpha\beta}$ is the number of paths of length $k$ from $\alpha$ to
$\beta$ whise first step is an edge of colour $c_{i_1}$, second step an
edge of colour $c_{i_2}$, \dots, last step an edge of colour $c_{i_k}$.

If we amalgamate two or more colours in $C(G,\Omega)$ the resulting coloured
graph will still admit $G$ as a group of automorphisms, though now no longer
flag-transitive, and the property that if $\Delta$ is a monochrome subgraph
then so is $\Delta^*$ may not persist. This operation of combining colours
corresponds to adding the appropriate basic adjacency matrices. Accordingly
we define a general \emph{adjacency matrix} for $C(G,\Omega)$ to be any
\label{p16}%
matrix of the form $\sum_{i\in I}B_i$ where $I$ is a non-empty subset of
$\{1,2,\ldots,r-1\}$, and we define the (generalized) subdegree of this
adjacency matrix to be its row -- or column -- sum. A special case will be
used in the proof of Lemma~\ref{l613}: if $\Gamma$, $\Delta$ are monochrome
subgraphs (possibly after some amalgamation of colours) we define a subgraph
$\Gamma\circ\Delta$ by
\[\begin{array}{rcl}
\Gamma\circ\Delta&=&\{(\alpha,\beta)\mid\alpha\ne\beta\hbox{ and there exists }
\gamma\in\Gamma\hbox{ such that }\\&&\qquad(\alpha,\gamma)\in\Gamma\hbox{ and }
(\gamma,\beta)\in\Delta\}.\end{array}\]
That is, $(\alpha,\beta)\in\Gamma\circ\Delta$ if and only if $\alpha\ne\beta$
and there is a path of length~$2$ directed from $\alpha$ to $\beta$, whose
first leg is coloured with the $\Gamma$-colour and whose second leg is the
colour corresponding to $\Delta$. It can happen that $\Gamma\circ\Delta$ is
empty, but only if $\Gamma=\Delta^*$ and $\Delta$ has subdegree~$1$. A
primitive group which is not regular (i.e.\ not cyclic of prime order) never
has non-trvial suborbits of subdegree~$1$, and so in such a case
$\Gamma\circ\Delta$ is never empty. It is of course a very special property 
of the coloured graph $C(G,\Omega)$ that $\Gamma\circ\Delta$ is a union of
monochrome subgraphs.

It has been remarked above that the basic adjacency matrices
$B_0,B_1,\ldots,B_{r-1}$ span an algebra $V$. Therefore there exist complex
numbers $a_{ijk}$ such that
\begin{equation}
B_iB_j=\sum_{i=0}^{r-1}a_{ijk}B_k.
\tag{6.1}
\end{equation}
In fact, these \emph{multiplication constants} $a_{ijk}$ are non-negative
integers, for, if the $(\alpha,\beta)$ entry of $B_k$ is $1$ then the
$(\alpha,\beta)$ entries of all the other basic adjacency matrices are~$0$,
therefore $a_{ijk}$ is the $(\alpha,\beta)$ entry in $B_iB_j$: and the
coefficients of $B_iB_j$ are obviously non-negative integers. The 
multiplication constants have an obvious geometric significance:
\[\begin{array}{rcl}
a_{0jk}&=&\delta_{jk}\\
a_{jok}&=&\delta_{ik}\\
a_{ij0}&=&\delta_{ij^*}n_i
\end{array}\]
(where $n_i$ is the appropriate subdegree), and, if $i\ge1$, $j\ge1$, $k\ge1$,
then $a_{ijk}$ is the number of oriented triangles $(\alpha,\beta,\gamma)$ on
a fixed base $(\alpha,\beta)$ of colour $c_k$, whose other two sides,
$(\alpha,\gamma)$ and $(\gamma,\beta)$ are coloured $c_i$ and $c_j$
respectively. Notice that $n_ka_{ijk}$ is the number of oriented triangles at
each vertex having edges of colours $c_i,c_j,c_{k^*}$ in that order.

\paragraph{N.B.} These structure constants, and numbers closely related to
them, have appeared many times before in the literature: see, for example,
Frame~\cite{r4,r5,r6,r7} and D.G.~Higman~\cite{r9,r10}. In this last paper
\cite{r10} Higman's $\mu_{ij}^{(\alpha)}$ is our $a_{i\alpha^*j}$, Consequently,
if we define (as Higman does) $M_\alpha=(\mu_{ij}^{(\alpha)})_{ij}$, then
(6.1) shows that $M_{\alpha^*}$ is the matrix representing $B_\alpha$ 
with respect to the basis $\{B_0,\ldots,B_{r-1}\}$ of $V$ in the regular
representation of $V$ (compare \cite{r10}, p.31). There are many relations
between the constants $a_{ijk}$ (compare \cite{r5} Theorem 2.9(b), (c), or
see \cite{r10}, (4.1), (4.2)) which are geometrically almost obvious, or which
can be read off from equation (6.4) below. The less obvious relations,
\[\sum_{\nu}a_{ij\nu}a_{\nu kl} = \sum_{\nu}a_{jk\nu}a_{i\nu l},\]
which express the associativity of $V$, also have geometric significance.
The left side enumerates quadrilaterals $(\alpha,\gamma,\delta,\beta)$ on a
fixed edge $(\alpha,\beta)$ of colour $c_i$, whose other sides
$(\alpha,\gamma)$, $(\gamma,\delta)$, $(\delta,\beta)$ are coloured $c_i$,
$c_j$, $c_k$ respectively, by counting those in which the diagonal
$(\alpha,\delta)$ is coloured $c_\nu$, and summing. The right side of the
equation enumerates the same quadrilaterals, but by counting those in which
the diagonal $(\gamma,\beta)$ has colour $c_\nu$ and summing.

The constants $a_{ijk}$ can be computed from trace relations. From the
definition of the matrices $B_i$ it is clear that
\begin{equation}
\tr(B_i)=
\begin{cases}
n & \hbox{if }i=0\\
0 & \hbox{if }i\ne0\\
\end{cases}.
\tag{6.2}
\end{equation}
From (6.2) and the defining relations (6.1) of $V$ it follows that
\begin{equation}
\tr(B_iB_j)=na_{ij0}
=\begin{cases}
0 & \hbox{if }j\ne i^*\\
nn_i & \hbox{if }j=i^*\\
\end{cases},
\tag{6.3}
\end{equation}
(see \cite{r20}, Theorem 28.10). Furthermore,
\[B_iB_jB_{k^*}=\sum_l a_{ijl}B_lB_{k^*},\]
and so (compare \cite{r6}, Equation (4.2)) from the above equation,
\begin{equation}
\tr(B_iB_jB_{k^*}) = nn_ka_{ijk}.
\tag{6.4}
\end{equation}
This again may be seen geometrically, for $B_iB_jB_{k^*}$ has as its
$(\alpha,\alpha)$ coordinate the number of paths of length $3$ from $\alpha$
back to $\alpha$ coloured $c_i$, $c_j$, $c_{k^*}$ in that order. This number
is $n_ka_{ijk}$ for all $\alpha\in\Omega$, and so the trace of
$B_iB_jB_{k^*}$ is $nn_ka_{ijk}$.

The equation
\begin{equation}
\sum_0^{r-1}B_i=W,
\tag{6.5}
\end{equation}
where $W$ is the matrix all of whose entries are $1$, expresses the fact that
the monochrome subgraphs $\Delta_i$ cover the whole complete graph on 
$\Omega\times\Omega$ without overlapping.

We shall find a slight generalisation of equations (6.2), (6.3) and (6.5)
useful. Namely, let $C$ be a complete directed graph obtained from
$C(G,\Omega)$ by amalgamating colours according to the rule%
\footnote{Such a collection of the resulting matrices will be called an
\emph{admissible set} of adjacency matrices.}
that, if $\Delta$ is a monochrome subgraph in $C$, then so also is $\Delta^*$.
The adjacency matrices $A_1,A_2,\ldots,A_{t-1}$ ($t\le r$) are some of the
general adjacency matrices of $C(G,\Omega)$ defined on page~\pageref{p16}.
If we put $A_0=I$, and extend the convention used before so that 
$A_{i^*}=A_i^\top$ then it is easy to see that the linear and quadratic trace
relations hold (compare \cite{r15}):
\begin{equation}
\tr(A_i)=\begin{cases}
n & \hbox{if }i=0\\
0 & \hbox{if }i\ge1
\end{cases},
\tag{$6.2^\#$}
\end{equation}
\begin{equation}
\tr(A_iA_j)=\begin{cases}
0 & \hbox{if }j\ne i^*\\
nm_i & \hbox{if }j=i^*
\end{cases},
\tag{$6.3^\#$}
\end{equation}\label{p18}%
where $m_i$ now is the row sum (or column sum) of $A_i$, that is, it is the
number of edges of $C$ of colour $c_i$ emanating from each point. Of course
we will also have
\begin{equation}
\sum_0^{t-1}A_i=W.
\tag{$6.5^\#$}
\end{equation}

Since $V$ and the algebra spanned by $G$ (as a set of permutation matrices)
are centralizer algebras of one another we know that they are simultaneously
reducible\footnote{We assume that the coefficient field is the splitting
field for $G$, or at least, that the matrix representations $D_\lambda$ are
absolutely irreducible} and there is an invertible matrix $U$ (see
\cite{r20}, \S29) such that
\[\scriptsize{U^{-1}gU=\begin{pmatrix}
1&&&&\\
&e_1\left\{\begin{matrix}D_1(g)&&\\&\ddots&\\&&D_1(g)\end{matrix}\right.&&&\\
&&e_2\left\{\begin{matrix}D_2(g)&&\\&\ddots&\\&&D_2(g)\end{matrix}\right.&&\\
&&&\ddots&\\
&&&&e_s\left\{\begin{matrix}D_s(g)&&\\&\ddots&\\&&D_s(g)\end{matrix}\right.
\end{pmatrix}}\]
for all $g\in G$; and, for any adjacency matrix $A_i$,
\[U^{-1}A_iU=\begin{pmatrix}
m_i&&&&\\
&\Theta_{i1}\times I_{f_1}&&&\\
&&\Theta_{i2}\times I_{f_2}&&\\
&&&\ddots&\\
&&&&\Theta_{is}\times I_{f_s}
\end{pmatrix}.\]\label{p19}
Here $D_\lambda$ is a matrix representation of $G$ of degree $f_\lambda$ 
which affords the character $\chi_\lambda$ (where
$\pi=1+\sum_1^se_\lambda\chi_\lambda$); $m_i$ is the subdegree of $A_i$;
$\Theta_{i\lambda}$ is an $e_\lambda$ by $e_\lambda$ matrix, $I_{f_\lambda}$
is the $f_\lambda$ by $f_\lambda$ identity matrix, and $\times$ denotes
Kronecker product. Since conjugation by $U$ gives an algebra isomorphism which
leaves traces invariant, equations ($6.2^\#$), ($6.3^\#$), (6.4) become
\begin{equation}
\sum_{\lambda=0}^sf_\lambda\tr(\Theta_{i\lambda})=0\quad(i\ge1)
\tag{6.6}
\end{equation}
\begin{equation}
\sum_{\lambda=0}^sf_\lambda\tr(\Theta_{i\lambda}\Theta_{j\lambda})=
\begin{cases}
0 & \hbox{if }j\ne i^*\\ nm_i & \hbox{if }j=i^*
\end{cases}
\tag{6.6}
\end{equation}
\begin{equation}
\sum_{\lambda=0}^sf_\lambda\tr(\Theta_{i\lambda}\Theta_{j\lambda}\Theta_{k\lambda})=nn_ka_{ijk},
\tag{6.8}
\end{equation}
where, in (6.6) and (6.7) the matrices $\Theta_{?\lambda}$ can be interpreted
as coming from an admissible set of general adjacency matrices
$A_1,\ldots,A_{t-1}$ as on page~\pageref{p18}, but in the third equation the
matrices $\Theta_{?\lambda}$ must come from the basic adjacency matrices
$B_1,\ldots,B_{r-1}$. Since 
\[U^{-1}WU=\begin{pmatrix} 
n&0&\ldots&0\\0&&&\\\vdots&&&\\0&&\ldots&0\end{pmatrix},\]
($6.5^\#$) gives
\begin{equation}
\sum_{i=1}^{t-1}\Theta_{i\lambda}=-I_{e_\lambda}\qquad(\lambda=1,\ldots,s)
\tag{6.9}
\end{equation}
where the $\Theta_{i\lambda}$ can be interpreted as coming from an 
admissible set of general adjacency matrices $A_1,\ldots,A_{t-1}$.

If $e_\lambda=1$ then $\Theta_\lambda$ is just an eigenvalue $\theta_\lambda$
of the adjacency matrix $A$. In any case, if $\theta$ is an eigenvalue of
$\Theta_\lambda$ then $\theta$ is an eigenvalue of $A$: we shall call the 
eigenvalues of $\Theta_\lambda$ the eigenvalues of $A$ \emph{associated with}
$\chi_\lambda$, and of course it can happen that one eigenvalue of $A$ is
associated with several different constituents of $\pi$. In case \emph{all}
the multiplicities $e_\lambda$ are $1$, so that $V$ is commutative (and
$s=r-1$, the relations (6.6)(6.9), (6.7), (6.8) are just linear, quadratic
and cubic relations on the eigenvalues of the adjacency matrices. This is
the case in which they are the most useful. Two small facts about this case
are useful:

\begin{unnumber}{\bf(6.10)}
If $r=3$ then $\theta_{i1}\ne\theta_{i2}$\quad$(i=1,2)$;
\end{unnumber}

\begin{unnumber}{\bf(6.11)}
If $V$ is commutative (i.e.\ $e_\lambda=1$ for all $\lambda$). if the degrees
of $f_\lambda$ are all different, and if the eigenvalues of the adjacency
matrix $A$ associated with distinct constituents $\chi_\lambda$ are different,
then $A$ is symmetric (i.e.\ the corresponding colour is self-paired).
\end{unnumber}

\begin{proof}
If $r=3$ then $V$ is automatically commutative. If $\theta_{11}=\theta_{12}$
then (6.9) gives
\[\theta_{21}=-1-\theta_{11}=-1-\theta_{12}=\theta_{22}.\]
But then the matrices $O$, $U^{-1}B_1U$, $U^{-1}B_2U$ would be linearly
dependent, which is not so. Thus $\theta_{11}\ne\theta_{12}$, and similarly
$\theta_{21}\ne\theta_{22}$.

If $V$ is commutative then $U^{-1}AU$ and $U^{-1}A^\top U$ are diagonal
matrices. Since $A$ and $A^\top$ have the same eigenvalues with the same 
multiplicities, the hypotheses of (6.11) ensure that $U^{-1}AU=U^{-1}A^\top U$.
That is, $A=A^\top$.
\end{proof}

\setcounter{theorem}{11}

\begin{lemma}
Let $\chi$, $\psi$ be irreducible constituents of $\pi$ whose
multiplicities are~$1$, and let $\theta$, $\phi$ be associated eigenvalues
of an adjacency matrix $A$. If $\gamma\in\Gamma$ (the Galois group of a
suitable field, cf.\ \S2) and $\chi^\gamma=\psi$, then $\theta\gamma=\phi$.
\label{l612}
\end{lemma}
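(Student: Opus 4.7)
The plan is to write the eigenvalue $\theta$ as a $\mathbb{Q}$-linear combination of the values $\overline{\chi(g)}$ with rational integer coefficients independent of $\chi$, so that applying $\gamma$ converts the formula into the corresponding one for $\psi$ and $\phi$. The hypothesis $e_\chi=1$ is essential: it is what makes $\theta$ a genuine scalar attached to a well-defined $\chi$-isotypic subspace.

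First I would invoke Schur's lemma. Since $A$ lies in the centraliser of $G$ and, because $e_\chi=1$, the $\chi$-isotypic component $V_\chi\subseteq\mathbb{C}\Omega$ is an irreducible $G$-submodule of dimension $f_\chi=\chi(1)$, the matrix $A$ acts on $V_\chi$ as the scalar $\theta$. The central idempotent
\[P_\chi \;=\; \frac{f_\chi}{|G|}\sum_{g\in G}\overline{\chi(g)}\,\pi(g)\]
projects $\mathbb{C}\Omega$ onto $V_\chi$, so $\tr(AP_\chi)=f_\chi\theta$. Expanding the trace and cancelling $f_\chi$ yields
\[\theta \;=\; \frac{1}{|G|}\sum_{g\in G}\overline{\chi(g)}\,c(g), \qquad c(g):=\tr(A\,\pi(g)).\]
A direct entry-by-entry inspection shows $c(g)=\#\{\alpha\in\Omega:(\alpha,g\alpha)\in A\}$, a non-negative rational integer; in particular $\gamma c(g)=c(g)$.

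Finally I would apply $\gamma$. Since $F$ is cyclotomic, $\Gamma$ is abelian, so $\gamma$ commutes with complex conjugation (itself an element of $\Gamma$). Combined with the hypothesis $\chi^\gamma=\psi$ this gives $\overline{\chi(g)}\gamma=\overline{\chi(g)\gamma}=\overline{\psi(g)}$. As $|G|$ and the integers $c(g)$ are fixed by $\gamma$, the displayed formula transforms into
\[\theta\gamma \;=\; \frac{1}{|G|}\sum_{g\in G}\overline{\psi(g)}\,c(g).\]
But $\pi$ is $\Gamma$-invariant, so $\psi$ occurs in $\pi$ with the same multiplicity $1$ as $\chi$, and the same derivation applied to $\psi$ identifies the right-hand side as $\phi$. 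There is essentially no obstacle: the only facts used are Schur's lemma, the standard projector formula, the $0$--$1$ (hence integer-trace) nature of $A$, and the fact that $\Gamma$ commutes with complex conjugation. The content of the lemma is just that each such eigenvalue is explicitly Galois-intertwined with its character.
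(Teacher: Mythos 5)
Your proof is correct, but it takes a genuinely different route from the paper's. The paper argues directly on eigenspaces: letting $\gamma$ act coordinatewise on row vectors in $F^n$, and using only the fact that the matrices $g\in G$ and $A$ have rational entries, it shows that a basis of the $\chi$-eigenspace $E(\chi)$ is carried by $\gamma$ to a basis of $E(\chi^\gamma)=E(\psi)$, and that the relation $\mathbf{x}A=\theta\mathbf{x}$ transforms into $\mathbf{x}^\gamma A=(\theta\gamma)\mathbf{x}^\gamma$; the conclusion $\theta\gamma=\phi$ is immediate. Your argument instead extracts a closed formula $\theta=\frac{1}{|G|}\sum_g\overline{\chi(g)}\,\tr(A\pi(g))$ via the central idempotent and Schur's lemma, and then lets $\gamma$ act on that formula, using that $\Gamma$ is abelian so that $\gamma$ commutes with complex conjugation and that the integers $\tr(A\pi(g))$ are $\gamma$-fixed. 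Both proofs use the same essential inputs (rationality of $A$ and of the permutation matrices, multiplicity one so that the associated eigenvalue is the scalar action on a well-defined isotypic component), and both are complete. The paper's version is the more economical and generalises immediately to the remark following the lemma (multiplicities greater than one, where one gets a bijection of eigenvalue sets rather than a single scalar); yours has the side benefit of producing the explicit character-sum expression for the eigenvalue, which is of independent interest, though it is tied more tightly to the multiplicity-one hypothesis since it presupposes that $A$ acts as a scalar on the isotypic component.
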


\begin{proof}
Use a superscript $\gamma$ to indicate the natural operation of $\Gamma$ on
$F^n$, the vector space of $1\times n$ row vectors over $F$, and on
$M_n(F)$, the set of all $n\times n$ matrices over $F$. Since
$G\subseteq M_n(F)$, $G$ operates on $F^n$ by right multiplication and $\pi$
is the character of this representation of $G$. Let $E(\chi)$ be the 
$G$-invariant subspace of $F^n$ which affords $\phi$. If 
$\{\mathbf{x}_i\mid i=1,\ldots,f\}$ is a basis for $E(\chi)$, and if for
$g\in G$,
\[\mathbf{x}_i^\gamma g=\sum_j\lambda_{ij}(g)\mathrm{x}_j^\gamma,\]
then, since $g$ is a matrix with rational coefficients,
\[\mathbf{x}_i^\gamma g=\sum_j\lambda_{ij}(g)\gamma\mathbf{x}_j^\gamma.\]
We read off from this that $\{\mathbf{x}_i^\gamma\mid i=1,\ldots,f\}$
is a basis for the subspace of $F^n$ affording the character $\chi^\gamma$
of $G$. That is, $E(\chi)^\gamma=E(\psi)$. Now
\[\mathbf{x}A=\theta\mathbf{x}\]
for all $x\in E(\chi)$. Therefore
\[x^\gamma A =\theta\gamma\mathrm{x}^\gamma\]
for all $\mathbf{x}\in E(\chi)$, since $A$ is a rational matrix. Thus
$\theta\gamma$ is the eigenvalue of $A$ on $E(\chi)^\gamma$, that is, by
definition of $\phi$ we have $\theta\gamma=\phi$.
\end{proof}

\paragraph{N.B.} This lemma has an obvious generalisation in case the
multiplicities of $\chi$, $\psi$ in $\pi$ are not $1$, namely, the set of
eigenvalues of $A$ associated with $\chi$ is carried by $\gamma$ to the set
of eigenvalues associated with $\psi$. And it has an easy converse: is
$\theta$, $\phi$ are eigenvalues of $A$ such that $\theta\gamma=\phi$, then
$\chi^\gamma=\psi$ where
\begin{eqnarray*}
\chi&=&\{\sum e_i\chi_i\mid\theta\hbox{ is an eigenvalue associated with }
\chi_i\},\\
\psi&=&\{\sum e_i\chi_i\mid\phi\hbox{ is an eigenvalue associated with }
\chi_i\}.
\end{eqnarray*}
However, for both the generalisation and its converse we may need to work in a
larger field than $F$, namely a normal extension which contains all the
eigenvalues of the appropriate matrices $\Theta_\lambda$ as on page
\pageref{p19}.

Our last lemma in this section is the filter which sorts primitive from
imprimitive groups in the calculation of \S\S7--10. D.~G.~Higman~\cite{r10}
and C.~C.~Sims~\cite{r17} have pointed out that $G$ is primitive on $\Omega$
if and only if all the monochrome subgraphs in $C(G,\Omega)$ are connected.
This can be translated into a criterion applying to the eigenvalues of the
aduacency matrices, a slight modification of a special case of the
Perron--Frobenius Theorem (see, for example, \cite{r2}\footnote{I am grateful
to Dr J.~A.~D.~Welsh for introducing me to relevant literature.}). For
completeness a proof is included.

\begin{lemma}
Suppose that $G$ is primitive on $\Omega$. Let $A$ be an adjacency matrix of 
subdegree~$m$ where $m>1$. Then
\begin{itemize}
\item[(i)] $m$ is an eigenvalue of $A$ of (algebraic) multiplicity~$1$;
\item[(ii)] if $\theta$ is any other eigenvalue of $A$ then $|\theta|<m$.
\end{itemize}
\label{l613}
\end{lemma}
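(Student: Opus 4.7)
The plan is a Perron--Frobenius argument adapted to exploit the vertex-transitivity and primitivity of $G$. The matrix $A$ is nonnegative $0$--$1$ with row sum equal to column sum equal to $m$ (column sums agree with row sums because each basic $B_i$ has column sum $n_{i^*}=n_i$). The vector $\mathbf{1}$ is an eigenvector for $m$, and for any eigenvalue $\theta$ with eigenvector $\mathbf{x}$ the estimate $|\theta|\,|x_i|\le\sum_j A_{ij}|x_j|$, summed over $i$ using the column-sum identity, already yields $|\theta|\le m$. What needs work is the strict inequality in (ii) and the simplicity of $m$ in (i); both will come from a single combinatorial input together with primitivity.

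The combinatorial input is that the digraph $\Delta$ with adjacency matrix $A$ is strongly connected. Its strongly connected components form a $G$-invariant partition of $\Omega$, and primitivity forces this partition to be trivial; the ``all singletons'' option is ruled out because a vertex-transitive digraph with positive out-degree must contain a directed cycle. Now for (i): if $\mathbf{y}$ is a real eigenvector for $m$ and $M=\max_i y_i$, then at any $i$ with $y_i=M$ the identity $mM=\sum_j A_{ij}y_j$ forces $y_j=M$ for every out-neighbour $j$ of $i$; thus $\{i:y_i=M\}$ is closed under $N^+$, and strong connectivity gives $\mathbf{y}=M\mathbf{1}$. This settles geometric multiplicity $1$; for algebraic multiplicity $1$, a Jordan block of length $\ge 2$ would produce $\mathbf{y}$ with $(A-mI)\mathbf{y}=\mathbf{1}$, and summing coordinates (using the column-sum identity) would give $0=n$, a contradiction.

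For (ii), suppose $\theta\ne m$ satisfies $|\theta|=m$ with eigenvector $\mathbf{x}$. Tightness of the summation estimate forces $A|\mathbf{x}|=m|\mathbf{x}|$; by (i), $|\mathbf{x}|=c\mathbf{1}$ for some $c>0$. Write $x_j=c\zeta_j$ with $|\zeta_j|=1$ and set $\omega=\theta/m$. At each vertex $i$, equality in the triangle inequality $|\sum_j A_{ij}x_j|=\sum_j A_{ij}|x_j|$ combined with the eigenvalue equation forces $\zeta_j=\omega\zeta_i$ for every $j\in N^+(i)$. Iterating along a closed directed walk of length $k$ in $\Delta$ gives $\omega^k=1$; hence $\omega^h=1$ where $h$ is the period of $\Delta$. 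The classes of $\Omega$ determined by path length modulo $h$ from a fixed base (well-defined by strong connectivity) form a $G$-invariant partition, so, since $h\mid 3p$, primitivity rules out $h\in\{3,p\}$, and $m>1$ rules out $h=3p$ (which would require every cycle to be Hamiltonian). Hence $h=1$, $\omega=1$, and $\theta=m$, contradicting $\theta\ne m$. The main obstacle in this plan is the periodicity step: one must track the complex phases $\zeta_j$ along directed walks carefully enough to extract the period $h$, and then recognise the resulting phase partition as a block system for $G$ so that primitivity can be invoked to rule out peripheral eigenvalues.
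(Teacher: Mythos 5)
Your proof is correct, but it follows a genuinely different route from the paper's. You run the classical combinatorial Perron--Frobenius argument: strong connectivity of the digraph $\Delta$ (obtained by observing that the strong components form a block system and that an acyclic vertex-transitive digraph with positive out-degree is impossible), the maximum principle at a coordinate where a real $m$-eigenvector is largest, a Jordan-block-plus-column-sum computation to upgrade geometric to algebraic simplicity, and finally the period/cyclic-class analysis for peripheral eigenvalues, where primitivity is invoked a second time to force the period $h$ to be $1$ (with $m>1$ disposing of $h=3p$). The paper instead works with the $\ell^2$ norm: a single Cauchy--Schwarz estimate gives $\|\mathbf{x}A\|\le m\|\mathbf{x}\|$, and the equality analysis shows that a norm-preserved vector must be constant on the classes of the relation $\Delta\circ\Delta^*$ (``having a common out-neighbour in $\Delta$''); since $\Delta\circ\Delta^*$ is a non-empty union of monochrome subgraphs, each connected by the Higman--Sims primitivity criterion, such a vector is a multiple of $\mathbf{t}=(1,\dots,1)$. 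Hence $A$ is a strict contraction on the $A$-invariant complement $U=\{\mathbf{x}:\sum x_\alpha=0\}$, which delivers (i) and (ii) simultaneously with one invocation of primitivity and no discussion of periods or Jordan structure. Your version is more standard and arguably more transparent to a reader who knows Perron--Frobenius theory, but it uses primitivity twice and needs the separate algebraic-multiplicity step; the paper's version is shorter, self-contained, and localises the combinatorial input to the single statement that $\Delta\circ\Delta^*$ is a connected union of suborbits. One point worth making explicit in your write-up is that both of your partitions (strong components, and cyclic classes) are canonically determined by the $G$-invariant digraph, which is why $G$ permutes them and they form block systems.
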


\begin{proof}
The matrix $A$ operates by right multiplication as a linear transformation
of $\mathbb{C}^n$, the space of $1$~by~$n$ row vectors
$\mathbf{x}=(x_0,\ldots,x_{n-1})$ with complex coordinates. If
$\mathbf{t}=(1,1,\ldots,1)$ then, since every column of $A$ contains $m$
entries $1$ and $n-m$ entries $0$, we have
\[\mathbf{t}A=m\mathbf{t}.\]
Hence $m$ is an eigenvalue of $A$ with (algebraic) multiplicitiy at least~$1$.
Let $U=\{\mathbf{x}\mid\sum x_\alpha=0\}$. Then
$\mathbb{C}^n=\langle t\rangle\oplus U$, and $U$ is invariant under $A$. Thus
it will be sufficient to show that if $\theta$ is an eigenvalue of $A$ on $U$
then $|\theta|<m$.

Now let $\mathbf{x}$ be any vector in $\mathbb{C}^n$, and let
$\mathbf{y}=\mathbf{x}A$. If the $(\alpha,\beta)$ coordinate of $A$ is
$m_{\alpha\beta}$ then
\[|\mathbf{y}|^2=\sum_\beta\Big|\sum_\alpha x_\alpha m_{\alpha\beta}\Big|^2.\]
Now $m_{\alpha\beta}$ is $0$ or $1$, so $m_{\alpha\beta}^2=m_{\alpha\beta}$, and
\begin{eqnarray*}
\Big|\sum_\alpha x_\alpha m_{\alpha\beta}\Big|^2
&=& \Big|\sum_\alpha x_\alpha m_{\alpha\beta}^2\Big|^2\\
&\le& \left(\big|\sum_\alpha x_\alpha m_{\alpha\beta}\big|^2\right)
\left(\sum_{\alpha}|m_{\alpha\beta}|^2\right)
\end{eqnarray*}
by Cauchy's Inequality. Each column of $A$ contains precisely $m$ entries
equal to $1$, the rest $0$, and so $\sum_\alpha|m_{\alpha\beta}|^2=m$.

Thus
\begin{eqnarray*}
\|\mathbf{y}\|^2 &\le& m\sum_\beta\sum_\alpha|x_\alpha m_{\alpha\beta}|^2\\
&=& \sum_\alpha\sum_\beta m_{\alpha\beta}^2|x_\alpha|^2\\
&=& m\sum_\alpha m|x_\alpha|^2\\
&=& m^2\|\mathbf{x}\|^2,
\end{eqnarray*}
and so $\|\mathbf{y}\|\le m\|\mathbf{x}\|$.

If equality is to hold then, returning to our application of Cauchy's
Inequality, we require that for every $\beta$ the coordinates $x_\alpha$ for
which $m_{\alpha\beta}=1$ are all equal. That is, $x_\alpha=x_\gamma$ if there
exists $\beta$ such that $(\alpha,\beta)\in\Delta$ and
$(\gamma,\beta)\in\Delta$, where $\Delta$ is the graph of which $A$ is
adjacency matrix. In other words, $x_\alpha=x_\gamma$ if
$(\alpha,\beta)\in\Delta\circ\Delta^*$. Now $\Delta\circ\Delta^*$ is not empty
since the subdegree of $\Delta$ is not~$1$ (cf.~p.~\pageref{p16}), and
therefore $\Delta\circ\Delta^*$ is a union of monochrome subgraphs of
$C(G,\Delta)$. These are all connected since $G$ is primitive, and consequently
$\Delta\circ\Delta^*$ is connected. It now follows that $x_\alpha=x_\gamma$
for all $\alpha,\gamma\in\Omega$, and $\mathbf{x}$ is a multiple of
$\mathbf{t}$. Thus if $\mathbf{x}\notin\langle\mathbf{t}\rangle$ then
$\|\mathbf{x}A\|<m\|\mathbf{x}\|$. In particular, if $\theta$ is an 
eigenvalue of $A$ on $U$ then $|\theta|<m$, and the lemma is proved.
\end{proof}

\paragraph{N.B.} The condition $m>1$ is necessary but not restrictive, for,
if $m=1$ then $G$ must be cyclic of prime order $p$ and $A$ must be one of
the basic adjacency matrices. In this case $A$ is a permutation of order~$p$
whose eigenvalues are $p$-th roots of~$1$. Primitivity of $G$ is also
necessary, for, if $G$ is imprimitive, having $k$ blocks of size $l$, then
by suitably re-numbering the elements of $\Omega$ we can ensure that $A$ is
a matrix in block form $(B_{ij})$, where $N_{ij}$ is an $l\times l$ zero 
matrix if $i\ne j$, and $B_{ii}$ is an $l\times l$ matrix of zeros and ones
having $m$ entries equal to $1$ in each row and column. It is easy to see that
in this case $A$ has at least $k$ linearly independent eigenvectors with
eigenvalue~$m$.

The linear and quadratic equations in (6.6), (6.7), (6.9) were used by
Frame and D.~G.~Higman \cite{r4,r5,r6,r7,r9,r10} to obtain information about
the degrees $f_\lambda$ of the irreducible constituents of $\pi$ from 
information about the subdegrees $n_i$. They have been used by Wielandt and
Ito~\cite{r19,r13} the other way round, to deduce the subdegrees $n_i$ from
knowledge of the numbers~$f_\lambda$. This is the program to be carried out
now. In most cases $V$ turns out to be commutative, and we take the linear and
quadratic equations (6.6), (6.7), (6.9) as equations for the eigenvalues of
the adjacency matrices. As such we have about $r^2$ unknowns and only about
$\frac{1}{2}r(r+3)$ equations, where $r$ is the rank -- and these equations
seem not to be independent in general. On the other hand, since the adjacency
matrices have integer coordinates, their eigenvalues are algebraic integers,
and (6.6)--(6.9) is a system of diophantine equations. The cubic equations
(6.8) introduce new unknowns $a_{ijk}$, one for each equation. Nevertheless
they are useful as divisibility conditions. There are of course higher trace
relations, but these give no new information, because once equations
(6.6)--(6.8) have been solved to give the eigenvalues, they give the structure
constants $a_{ijk}$ and hence full information about the algebra $V$. The
policy therefore will be to find all solutions of (6.6)--(6.9) for which the
numbers $a_{ijk}$ are non-negative rational integers.

In \S\S7--10 there are never more than $4$ eigenvalues of the matrices
$B_i$ (or $A_i$).  Of these one is $n_i$ (or $m_i$). The others will be
re-named $\lambda_i$, $\mu_i$, $\nu_i$ for simplicity.

\section{Case I does not arise}

In this case $r$ is $3$ and (6.6) gives
\[n_i+\left(\frac{3p-1}{2}\right)(\lambda_i+\mu_i)=0.\]
Now $\lambda_i$. $\mu_i$ are algebraic integers, and so $\lambda_i+\mu_i$,
being rational, is a rational integer. Hence $n_1,n_2$ are both multiples of
$(3p-1)/2$. But $n_1+n_2=3p-1$, and therefore $n_1=n_2=(3p-1)/2$
(compare~\cite{r20}, Theorem~30.2).

Suppose now that $\Delta_1$ is self-paired. Then, from (6.6), (6.7),
\begin{eqnarray*}
\lambda_1+\mu_1 &=& -1 \\
\lambda_1^2+\mu_1^2 &=& \frac{3p+1}{2}.
\end{eqnarray*}
This yields $\lambda_1\mu_2=-(3p-1)/4$, and since $\lambda_1,\mu_1$ are
algebraic integers we must have $3p\equiv1\pmod{4}$, that is,
\[p\equiv3\pmod{4}.\]
It follows (Lemma~\ref{l51}) that $\overline{\chi_1}=\chi_2$, and so, from
Lemma~\ref{l612}, $\lambda_1$, $\mu_1$ are complex conjugate. But this is
certainly not the case -- the equations above give
$\lambda_1,\mu_1=(-1\pm\sqrt{3p})/2$.

Suppose therefore that $\Delta_1$ is paired with $\Delta_2$. Then
\begin{eqnarray*}
\lambda_1+\mu_1 &=& -1 \\
\lambda_1^2+\mu_1^2 &=& -\frac{3p-1}{2}.
\end{eqnarray*}
In this case therefore $\lambda_1\mu_1=(3p+1)/4$, so that $p\equiv1\pmod{4}$.
Applying Lemmas~\ref{l51} and~\ref{l612} we see that $\lambda_1,\mu_1$  must
both be real whereas the equations clearly give complex conjugate values.
These two contradictions prove that Case~I cannot occur.

\section{Groups of type II}

\begin{theorem}
If a group of type II exists, then either
\begin{itemize}
\item[(i)] $p=48a^2+30a+5$\hfil\break
and the subdegrees are $1$, $p+4a+1$, $2p-4a-2$; or
\item[(ii)] $p=48a^2+66a+23$\hfil\break
and the subdegrees are $1$, $p-4a-3$, $2p+4a+2$.
\end{itemize}
\label{t61}
\end{theorem}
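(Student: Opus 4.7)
The plan is to reduce Case~II to a single Diophantine equation in two integer unknowns, and then to analyse that equation.

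Since the rank is~$3$, the algebra~$V$ is commutative, so each basic adjacency matrix $B_i$ ($i=1,2$) has three scalar eigenvalues: the subdegree~$n_i$ and the values $\lambda_i$, $\mu_i$ associated with $\chi_1$ and~$\chi_2$ respectively. The first step is to show that $\lambda_i, \mu_i \in \mathbb{Z}$. Both $\chi_1$ and $\chi_2$ are $p$-rational, so by Lemma~\ref{l612} each $\lambda_i$ and $\mu_i$ is fixed by the Galois subgroup $\Gamma^*$; moreover, since Galois conjugation preserves multiplicities and the characters $\chi_1$, $\chi_2$ have distinct multiplicities $p$ and $2p-1$ in $\pi$, no Galois conjugate of $\lambda_i$ can coincide with $\mu_i$. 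Hence every $\lambda_i$, $\mu_i$ is fixed by all of $\Gamma$, so rational, and being an algebraic integer it is a rational integer. Next I rule out the possibility that $\Delta_1$ is paired with $\Delta_2$: that would give $B_2 = B_1^\top$, forcing $\lambda_1 = \lambda_2$ in contradiction to $\lambda_1 + \lambda_2 = -1$ from~(6.9). So both suborbits are self-paired.

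Write $u = \lambda_1$, $v = \mu_1$, so that $\lambda_2 = -1-u$, $\mu_2 = -1-v$, and (6.6) gives $n_1 = -pu - (2p-1)v$. The quadratic trace relation~(6.7) for $i=j=1$ reads $n_1^2 + p\lambda_1^2 + (2p-1)\mu_1^2 = 3p\,n_1$; substituting and grouping by powers of~$p$ turns this into
\[
p(u+2v)(u+2v+3) + (u-v)^2 - 3v(v+1) = 0. \qquad(\ast)
\]
The crucial move is to set $W = u+2v$ and view $(\ast)$ as a quadratic in~$v$: its discriminant simplifies to $9 - 12(2p-1)W(W+3)$, which is non-negative only when $W(W+3)\le 0$, restricting $W$ to $\{-3,-2,-1,0\}$. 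The extremes $W=0$ and $W=-3$ yield only $n_1 = 0$ or $n_1 = 3p-1$, both excluded as suborbit sizes; for $W = -1$ and $W = -2$ equation~$(\ast)$ collapses respectively to $2p = 6v^2+3v+1$ and $2p = 6v^2+9v+4$. Multiplying either by $8$ and completing the square produces the single clean form
\[
16p = 3E^2 + 5, \qquad E = 4v+1 \text{ or } E = 4v+3.
\]

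The remainder is congruence analysis. Reducing $16p = 3E^2 + 5$ modulo~$32$ and using that $p$ is odd, I find $E^2 \equiv 25\pmod{32}$, whence $E \equiv \pm 5\pmod{16}$. The choice $E = 16a+5$ satisfies $E \equiv 1\pmod 4$, so $W = -1$, giving $v = 4a+1$, $u = -(8a+3)$; back-substitution recovers $p = 48a^2 + 30a + 5$ with subdegrees $1$, $p+4a+1$, $2p-4a-2$, which is case~(i). The choice $E = 16a+11$ satisfies $E \equiv 3\pmod 4$, forcing $W = -2$ and giving $v = 4a+2$, $u = -(8a+6)$; this yields $p = 48a^2 + 66a + 23$ with subdegrees $1$, $p-4a-3$, $2p+4a+2$, which is case~(ii) (the two self-paired orbits can be labelled in either order). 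The main obstacle throughout is the discriminant bound $W \in \{-3,\dots,0\}$: it is essentially the Perron--Frobenius inequality $|\lambda_i|, |\mu_i| < n_i$ of Lemma~\ref{l613} that cuts the a priori one-dimensional family of integer points on an ellipse down to the two simple quadratic progressions above; once this bound is in place the rest (non-negativity of the relevant $a_{ijk}$ via~(6.8), positivity of subdegrees) is routine bookkeeping.
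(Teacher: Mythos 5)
Your argument is correct and ends up at exactly the same Diophantine core as the paper: both proofs use the linear relation (6.6) to express $n_1$ in terms of the integer eigenvalues, feed this into the quadratic relation (6.7) to obtain $2p=6v^2+3v+1$ (resp.\ $6v^2+9v+4$), rewrite this as $16p-5=3b^2$, and finish with the same congruence analysis modulo $32$ giving $b\equiv\pm5\pmod{16}$ and the two quadratic families. The one place where you genuinely diverge is the step that pins down which multiple of $p$ the subdegree $n_1$ is congruent to: the paper writes $n_i=\epsilon_ip+\mu_i$, invokes the Perron--Frobenius bound of Lemma~\ref{l613} to force $\epsilon_i\ge1$, and uses (6.9) to get $\epsilon_1+\epsilon_2=3$; you instead package everything into the single quadratic $(\ast)$ and observe that the discriminant $9-12(2p-1)W(W+3)$ must be non-negative, which confines $W=-\epsilon_1$ to $\{-3,-2,-1,0\}$, the endpoints being killed by mere positivity of the subdegrees. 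That substitution is a real (if local) simplification --- it shows Lemma~\ref{l613} is not needed for Type~II, at the cost of a slightly heavier algebraic identity to verify; the paper's route is shorter given that the lemma is already on the table and is the tool used uniformly in \S\S8--10. Two small points of hygiene: where you justify integrality of the eigenvalues you say $\chi_1,\chi_2$ have ``distinct multiplicities $p$ and $2p-1$'' --- these are their \emph{degrees} (both multiplicities are $1$); the argument you intend (Galois conjugation preserves degree, so it cannot interchange $\chi_1$ and $\chi_2$, hence both are rational and Lemma~\ref{l612} makes the eigenvalues rational integers) is sound, and is in fact more than the paper bothers to say. Also, a given prime admits both labellings $W=-1$ and $W=-2$ (swapping the two suborbits), so your assignment of $E=16a+5$ to case (i) and $E=16a+11$ to case (ii) is a choice of normalisation rather than a dichotomy forced by the congruence class of $E$ mod $4$; you acknowledge this, and it does not affect the conclusion.
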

Here $a\ge0$ and $a$ is an integer. The groups $A_6$, $S_6$ as in
Example~1 (p.~\pageref{p2}) are instances of case (i) above with $a=0$.
In \S12 we will show that in case (i) $a$ is even, and in case (ii) $a$
must be odd.

\begin{proof}
Groups of type II are of rank~$3$ and both their non-trivial suborbits are
self-paired ((6.10) and (6.11)). We have (6.6)
\[n_i+p\lambda_i+(2p-1)\mu_i=0\]
and $\lambda_i$, $\mu_i$ are rational integers. Consequently
\[n_i\equiv \mu_i\pmod{p}\]
and so $n_i=\epsilon_i+\mu_i$. Since $n_i>0$, from Lemma~\ref{l613} we
know that $\epsilon_i>0$, and from (6.9), $\epsilon_1+\epsilon_2=3$. Choosing
the notation suitably we may therefore suppose that $\epsilon_1=1$,
$\epsilon_2=2$. Thus $n_1=p+\mu_1$ and $\lambda_1=-2\mu_1-1$. Now (6.7) gives
\[(p+\mu_1)^2+p(1+2\mu_1)^2+(2p-1)\mu_1^2=3p(p+\mu_1),\]
which simplifies:
\begin{eqnarray*}
&&6\mu_1^2+3\mu_1+1-2p=0.\\
&&\mu_1=\frac{1}{4}\left(-1\pm\sqrt{\frac{16p-5}{3}}\right).
\end{eqnarray*}
Since $\mu_1$ is a rational integer it follows that $16p-5=3b^2$ for some
positive integer $b$. We require $p\equiv3$, $5$, $11$ or $13$ (mod~$16$)
if $3b^2+5\equiv0$, and $b\equiv5$ or $11$ if $3b^2+5$ is not to be divisible
by $32$. Hence $b=16a+5$ or $b=16a+11$, $a\ge0$. In the former case
$\mu_1=(-1+b)/4$ since $-1-b$ is not divisible by $4$, and in the latter
case $\mu_1=(-1-b)/4$. Now computing $p$ and $n_1$ in terms of $a$ gives the
statement of the theorem.
\end{proof}

\section{Cases III, IV, V}

\begin{theorem}
If $G$ is of Type III then $p=3a^2+3a+1$ for some integer $a$, and the
subdegrees are either
\begin{itemize}
\item[(i)] $1,p-2a-1,2p+2a$; or
\item[(ii)] $1,p+2a+1,2p-2a-2$.
\end{itemize}
\label{t91}
\end{theorem}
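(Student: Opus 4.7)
The plan is to follow the same template as the proof of Theorem~\ref{t61}: extract the linear and quadratic trace relations for the two adjacency matrices, reduce modulo~$p$, and solve for a single integer unknown whose shape forces the quadratic form of~$p$.

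First I would verify that both non-trivial suborbits are self-paired. Since the rank is $3$ the algebra $V$ is commutative, and by (6.10) the eigenvalues $\lambda_i,\mu_i$ of $B_i$ associated with $\chi_1$ (degree $2p$) and $\chi_2$ (degree $p-1$) satisfy $\lambda_i\ne\mu_i$; the third eigenvalue $n_i$ is strictly larger in absolute value by Lemma~\ref{l613}, so all three are distinct and (6.11) forces $B_i=B_i^\top$. Since $\chi_1$ and $\chi_2$ have distinct degrees, each is $\Gamma$-invariant, so by Lemma~\ref{l612} the $\lambda_i,\mu_i$ are fixed by~$\Gamma$ and hence are rational integers.

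Now to the computation. Equations (6.6) read
\[n_i+2p\lambda_i+(p-1)\mu_i=0,\]
so $n_i\equiv\mu_i\pmod{p}$, and one may write $n_i=\epsilon_i p+\mu_i$. Lemma~\ref{l613} gives $|\mu_i|<n_i$, whence $\epsilon_i\ge 1$; summing over $i$ and using $n_1+n_2=3p-1$ together with the $\chi_2$ row of (6.9) (which gives $\mu_1+\mu_2=-1$) yields $\epsilon_1+\epsilon_2=3$. Labelling so that $\epsilon_1=1$, $\epsilon_2=2$, I substitute $n_1=p+\mu_1$ into the linear relation to obtain $\lambda_1=-(1+\mu_1)/2$, and then plug into the diagonal quadratic relation
\[n_1^2+2p\lambda_1^2+(p-1)\mu_1^2=3p\,n_1\]
coming from (6.7). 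A routine simplification collapses this to $3\mu_1^2+1=4p$.

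The final step is to extract the diophantine form of $p$. The congruence $3\mu_1^2\equiv-1\pmod{4}$ forces $\mu_1$ to be odd; writing $\mu_1=\pm(2a+1)$ for a non-negative integer~$a$, the relation becomes $p=3a^2+3a+1$, and the two sign choices yield the two subdegree configurations (i) and (ii) in the theorem statement (via $n_1=p+\mu_1$, $n_2=3p-1-n_1$). The argument is not difficult in structure; the main thing to watch is the bookkeeping: justifying that each $\lambda_i,\mu_i$ is a rational integer (not merely an algebraic one), handling the choice $\epsilon_1=1$, $\epsilon_2=2$ consistently, and checking that the parametrisations $\mu_1=\pm(2a+1)$ with $a\ge0$ cover exactly the two cases without double counting.
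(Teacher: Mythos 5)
Your proposal is correct and follows essentially the same route as the paper's own proof of Theorem~\ref{t91}: the linear relation $n_i+2p\lambda_i+(p-1)\mu_i=0$ gives $n_i=\epsilon_ip+\mu_i$ with $\epsilon_1+\epsilon_2=3$, the quadratic relation collapses to $3\mu_1^2=4p-1$, and the parametrisation $\mu_1=\pm(2a+1)$ yields $p=3a^2+3a+1$ and the two subdegree patterns. Your additional remarks justifying self-pairedness via (6.10), (6.11) and Lemma~\ref{l613}, and the rationality of the eigenvalues via Lemma~\ref{l612}, are exactly the points the paper leaves implicit in its opening sentence ``Again our group has rank~$3$ and both its non-trivial suborbits are self-paired.''
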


The groups $A_7$, $S_7$ of degree~$21$ as in Example~2 (p.~\pageref{p2}) are
instances of Type~III, case (ii) with $a=1$.

\begin{theorem}
If $G$ is of Type IV, then $p=3a^2+3a+1$ where $a$ is an integer. In case two
suborbits are paired, say $\Delta_2^*=\Delta_3$, then
\begin{itemize}
\item[(i)] $a$ is even and $n_1=p-2a-1,n_2=n_3=p+a$; or
\item[(ii)] $a$ is odd and $n_1=p+2a+1,n_2=n_3=p-a-1$.
\end{itemize}
Otherwise all suborbits are self-paired, in which case
\begin{itemize}
\item[(iii)] $a$ is even and the subdegrees are $1, p+2a+1, p-a-1,p-a-1$.
\end{itemize}
\label{t92}
\end{theorem}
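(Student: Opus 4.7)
The plan is to run, for Type IV, the program already applied to Types II and III: convert the linear, quadratic and cross-trace relations (6.6), (6.7), (6.9) into a diophantine system for the eigenvalues of the basic adjacency matrices, and solve it case by case according to the pairing of the non-trivial suborbits. All multiplicities in Type IV are $1$, so $V$ is commutative and the eigenvalues $\lambda_i,\mu_i,\nu_i$ of $B_i$ associated with $\chi_1,\chi_2,\chi_3$ are scalars. Since $\chi_3=\chi_b$ is rational, each $\nu_i\in\mathbb{Z}$; since $\chi_a=\chi_1+\chi_2$ is rational, $\lambda_i+\mu_i\in\mathbb{Z}$, although $\lambda_i$ and $\mu_i$ may individually lie in a common quadratic extension of $\mathbb{Q}$ when $\chi_1$ and $\chi_2$ are interchanged by some $\gamma\in\Gamma$ (Lemma~\ref{l612}).

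My first step is to pin down the subdegrees modulo $p$. Reducing (6.6) modulo $p$ gives $n_i\equiv\nu_i\pmod p$, so $n_i=\epsilon_i+\nu_i$ with $p\mid\epsilon_i$; by Lemma~\ref{l613} each $\epsilon_i>0$, and combining $\sum n_i=3p-1$ with $\sum\nu_i=-1$ (from (6.9)) gives $\epsilon_1+\epsilon_2+\epsilon_3=3p$. Hence $\epsilon_i=p$ for every $i$, and substituting $n_i=p+\nu_i$ back into (6.6) yields
\[
\lambda_i+\mu_i=-1-\nu_i.
\]

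Quadratic information then depends on the pairing. If all three non-trivial suborbits are self-paired (case~(iii)), each $B_i$ is symmetric and $\tr(B_i^2)=3pn_i$ specialises to
\[
\lambda_i\mu_i=\nu_i^2+\tfrac{1}{2}(\nu_i+1)-p,
\]
so integrality of $\lambda_i\mu_i$ forces every $\nu_i$ to be odd. If instead $\Delta_2^*=\Delta_3$ (cases~(i), (ii)), then $B_3=B_2^\top$; Hermitian orthogonality of the isotypic decomposition of $\mathbb{C}^{3p}$, together with rationality of $\chi_3$, forces $\nu_3=\nu_2$ (hence $n_2=n_3$) and $\lambda_3=\overline{\lambda_2}$, $\mu_3=\overline{\mu_2}$. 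Then $\tr(B_2^2)=0$ yields $\lambda_2\mu_2=\tfrac{1}{2}(p+1)+2\nu_2+\nu_2^2$, while the symmetric matrix $B_1$ still gives $\lambda_1\mu_1=\nu_1^2+\tfrac{1}{2}(\nu_1+1)-p$. In these paired cases $\chi_1$ and $\chi_2$ cannot both be rational, for otherwise $\lambda_2,\mu_2$ would be real and $\lambda_3=\overline{\lambda_2}$ would force $B_2=B_3$.

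The decisive step is to feed this into the cross-trace relations $\tr(B_iB_j)=0$ for $i\ne j^*$, which reduce to
\[
\lambda_i\lambda_j+\mu_i\mu_j=-p-(\nu_i+\nu_j)-\nu_i\nu_j.
\]
Combined with the sum/product data above (and Lemma~\ref{l612}, which places all the $\lambda_i,\mu_i$ in one common quadratic field), these eliminate every triple $(\nu_1,\nu_2,\nu_3)$ except those satisfying $4p=3\nu_1^2+1$. Writing $\nu_1=\pm(2a+1)$ gives $p=3a^2+3a+1$; the sign of $\nu_1$ and the pairing type recover the three sub-cases and their subdegrees ($\nu_1=-(2a+1)$, $\nu_2=\nu_3=a$ in case~(i); $\nu_1=2a+1$, $\nu_2=\nu_3=-a-1$ in case~(ii) paired and case~(iii) self-paired). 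The main obstacle is the book-keeping needed to sieve all candidate triples simultaneously against the cross relations and the integrality of the structure constants $a_{ijk}$ from~(6.8); the parity conditions on $a$ in cases~(i) and~(ii) (automatic in (iii) from oddness of $\nu_2=-a-1$) would be extracted from those same integrality conditions, as for Type~II in \S12.
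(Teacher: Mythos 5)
Your setup is sound and matches the paper's: reducing (6.6) modulo $p$ with Lemma~\ref{l613} and (6.9) gives $n_i=p+\nu_i$, the sum/product relations for $\lambda_i,\mu_i$ follow, oddness of $\nu_i$ in the self-paired case comes from integrality of $\lambda_i\mu_i$, and your final triples $(\nu_1,\nu_2,\nu_3)$ and parities are the right ones. But your decisive step fails. You claim that the cross-trace relations $\lambda_i\lambda_j+\mu_i\mu_j=-p-(\nu_i+\nu_j)-\nu_i\nu_j$, together with the sum/product data, ``eliminate every triple except those satisfying $4p=3\nu_1^2+1$.'' In the self-paired case these cross relations are consequences of the diagonal quadratic relations and (6.9) (sum $\tr(B_iB_j)$ over $j$ and use $\sum B_j=W-I$), so they add nothing; the full linear-plus-quadratic system is exactly (9.6)--(9.7), and the paper records explicitly (in \S13) that this system has solutions \emph{other} than the family $4p-1=3\nu^2$ --- besides the family $p=\nu^2$ there are sporadic solutions, found by Stoy's computations. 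Killing those is the hard core of Theorem~\ref{t92}: one needs the cubic trace relations (6.8) to get the divisibility $2(p+\nu_i)\mid(\nu_i+1)(2\nu_i+1)(3\nu_i+1)$, the companion divisibility $(2p+3\nu_i+1)\mid3(\nu_i+1)^2(2\nu_i+1)$ obtained by rationalising (9.7), and then the argument that the integer $w=v-u$ satisfies both $w\ge0$ (from reality of the eigenvalues) and $w\le0$ (from a discriminant bound), forcing $w=0$ and hence $4p-1=3b^2$. Your proposal relegates (6.8) to ``book-keeping'' for the parity conditions, which is precisely to omit the step that, as the author says, eluded him for some time.

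Two smaller points. First, in the paired case your route via ``$\lambda_3=\overline{\lambda_2}$, $\mu_3=\overline{\mu_2}$'' is shaky (if $\lambda_2,\mu_2$ are real and distinct this would wrongly force $B_3=B_2$); the paper argues instead that $B_3=B_2^\top$ has the same characteristic polynomial, so $\nu_3=\nu_2$ and $\{\lambda_3,\mu_3\}=\{\lambda_2,\mu_2\}$, and $B_3\ne B_2$ forces the swap $\lambda_3=\mu_2$, $\mu_3=\lambda_2$. Second, the paper then disposes of the paired case cheaply by amalgamating $B_2+B_3$ and invoking the rank-$3$ computation of Theorem~\ref{t91}, where $3\mu_1^2=4p-1$ does follow from the quadratic relations alone; only the all-self-paired case needs the heavy machinery above. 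Your proposal treats the two cases as if they were on the same footing, which hides where the real difficulty lies.
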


\begin{theorem}
Case V does not arise.
\label{t93}
\end{theorem}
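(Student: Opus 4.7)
In Case V the decomposition $\pi = 1_G + 2\chi_1 + \chi_2$ with $\deg\chi_1 = p$ and $\deg\chi_2 = p-1$ gives rank $r = 6$, and the centralizer algebra $V$ is isomorphic to $\mathbb{C} \oplus M_2(\mathbb{C}) \oplus \mathbb{C}$, hence non-commutative. My plan is to imitate the Case II treatment of \S8 while carrying the non-scalar $2\times 2$ block through the trace relations, in order to force an inconsistent diophantine system.

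The first step is to settle the pairing structure. The transpose involution preserves each isotypic block of $V$, and its symmetric part has dimension $1 + 3 + 1 = 5$ out of $\dim V = 6$; translated to basic matrices, this forces exactly three of the five non-trivial suborbits to be self-paired and the remaining two to form one paired class, say $\Delta_5 = \Delta_4^*$ with $n_4 = n_5$. At the same time, using $\pi|_P = 3\rho_P$ and the Galois-averaging argument applied in \S5, one records $\chi_1|_P = \rho_P$ and $\chi_2|_P = \sum_{\lambda\in\Lambda^*}\lambda$.

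Next, for each $i = 1,\dots,5$ set $t_i = \operatorname{tr}(\Theta_{i,1})$ (a trace of a $2\times 2$ block) and $\mu_i = \Theta_{i,2}$ (a scalar, since $e_2 = 1$). Equation (6.6) reads $n_i + p\,t_i + (p-1)\mu_i = 0$, so $n_i \equiv \mu_i \pmod p$ once we know $t_i, \mu_i \in \mathbb{Z}$; this rationality follows from Lemma~\ref{l612} applied to $\chi_2$, together with $p$-rationality (the analogue of the remark in \S5 about Suzuki/Brauer). Writing $n_i = \mu_i + p\,\varepsilon_i$, summing by (6.9), and applying Lemma~\ref{l613} to the symmetric admissible amalgamation $A = B_4 + B_5$ yields $\sum\varepsilon_i = 3$ with each $\varepsilon_i \ge 1$ on a self-paired $B_i$ of positive subdegree, and $\varepsilon_4 = \varepsilon_5$, $\mu_4 = \mu_5$ on the paired class. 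This already forces the $\varepsilon_i$ into a very short list of distributions (roughly, three $1$'s among the self-paired matrices and $\varepsilon_4 = \varepsilon_5 = 0$, or similar).

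Finally I would impose the quadratic relations (6.7) on each individual $B_i$ and on the amalgam $A$, together with the cubic relations (6.8) and the non-negativity and integrality of the structure constants $a_{ijk}$; the expected outcome is that in each distribution of $(\varepsilon_i,\mu_i)$ the consistency of these equations forces either a prime-root equation with no integer solution (as in Case II, but now with unsolvable discriminant) or a negative $a_{ijk}$. The main obstacle I foresee is that since $\Theta_{i,1}$ is a $2\times 2$ matrix, one cannot collapse (6.7) to a single quadratic in one unknown as in Case II: the quantity $\operatorname{tr}(\Theta_{i,1}^2)$ is not determined by $t_i$ alone. The remedy is to retain it as an additional unknown bounded below by $t_i^2/2$ (from Cauchy–Schwarz on $2\times 2$ matrices) and to exploit that the $M_2$-block of the admissible amalgam $A$ must satisfy Lemma~\ref{l613}, which still pins the $2\times 2$ structure down tightly enough to carry out the case analysis.
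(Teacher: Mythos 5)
Your set-up is sound as far as it goes --- the linear relation (6.6) for each basic matrix, the integrality of $t_i=\tr(\Theta_{i,1})$ and of the scalar eigenvalue $\nu_i$ attached to $\chi_2$, the normalisation $n_i=\varepsilon_i p+\nu_i$, and the identity $\sum\varepsilon_i=3$ obtained by summing subdegrees and using (6.9) --- but the proposal is not yet a proof: the decisive step is deferred to an ``expected outcome'' of an unexecuted case analysis involving (6.7), (6.8) and the structure constants $a_{ijk}$. The reason you are driven into that analysis is a single unnecessary restriction: you invoke Lemma~\ref{l613} only for self-paired $B_i$ (and for the symmetric amalgam $B_4+B_5$), and so permit $\varepsilon_i=0$ on the paired class. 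But Lemma~\ref{l613} makes no symmetry hypothesis: for \emph{any} adjacency matrix of subdegree $m>1$ in a primitive group, every eigenvalue other than $m$ has modulus strictly less than $m$ (its proof is a norm estimate, valid for complex eigenvalues of non-symmetric matrices). Since a primitive non-regular group has no non-trivial subdegree equal to $1$, each of the five basic matrices satisfies $|\nu_i|<n_i$, whence $\varepsilon_i p=n_i-\nu_i>0$ and $\varepsilon_i\ge1$ for \emph{all} $i=1,\dots,5$. Then $\sum\varepsilon_i\ge5$ contradicts $\sum\varepsilon_i=3$ at once, and neither the quadratic nor the cubic relations, nor the pairing structure, is needed. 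This short argument is precisely the paper's proof of Theorem~\ref{t93}.

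A secondary point: your determination of the pairing structure rests on the symmetric part of the $M_2(\mathbb{C})$ block of $V$ having dimension $3$, i.e.\ on the transpose involution being of orthogonal rather than symplectic type on that block; this needs justification (it amounts to a statement about the Frobenius--Schur indicator of $\chi_1$). Since the pairing structure plays no role once Lemma~\ref{l613} is applied in full strength, you can simply delete that step.
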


\paragraph{Proof of 9.1} Again our group has rank~$3$ and both its non-trivial
suborbits are self-paired. Using (6.6),
\[n_i+2p\lambda_i+(p-1)\mu_i=0\]
so that $n_i=\epsilon_ip+\mu_i$. As before, (6.13) gives $\epsilon_i\ge1$,
and from (6.9), $\epsilon_1+\epsilon_2=3$. We choose the notation so that
$\epsilon_1=1$, $n_1=p+\mu_1$, $\lambda_1=(-\mu_1-1)/2$. Now from (6.7),
\[(p+\mu_1)^2+2p\left(\frac{\mu_1+1}{2}\right)^2+(p-1)\mu_1^2=3p(p+\mu_1),\]
\[3\mu_1^2=4p-1.\]
Since again $\mu_1$ is a rational integer, we may put $(4p-1)/3=(2a+1)^2$
where $a$ is an integer, $a\ge0$, and then $p=3a^2+3a+1$, $\mu_1=\pm(2a+1)$.
Hence $n_1=p\pm(2a+1)$, and Theorem \ref{t91} follows.

\paragraph{Proof of 9.3} A group of type V would have rank~$6$, and there are
$5$ basic adjacency matrices $B_1,\ldots,B_5$ to be considered. The matrices
$\Theta_{i,1}$ are size $2\times 2$, and their eigenvalues $\lambda_i,\mu_i$
are eigenvalues of $B_i$ of multiplicity~$p$. The fourth eigenvalue $\nu_i$
of $B_i$ is necessarily a rational integer. The equation (6.6) gives
\[n_i+p(\lambda_i+\mu_i)+(p-1)\nu_i=0.\]
Now $\lambda_i+\mu_i$ is an algebraic integer which is rational, hence a
rational integer, and we get $n_i=\epsilon_ip+\nu+i$. As before, by
Lemma~\ref{l613} we must have $\epsilon_i\ge1$ for all $i$< and so 
$\sum\epsilon_i\ge5$. On the other hand, by (6.9) $\sum\nu_i=-1$, and
\[3p-1=\sum_1^5\nu_i=(\sum\epsilon_i)p+\sum\nu_i=(\sum\epsilon_i)p-1,\]
so that $\sum\epsilon_i=3$. This contradiction proves that case V cannot
arise.

\paragraph{Proof of 9.2}
Groups of type IV have rank $4$ and the three basic adjacency matrices $B_i$
have eigenvalues $n_i,\lambda_i,\mu_i,\nu_i$ where $\nu_i$ is a rational
integer, and either $\lambda_i$, $\mu_i$ are rational integers or they are 
algebraically conjugate algebraic integers. Using the equation (6.6) we
get $n_i=\epsilon_ip+\nu_i$ where, from Lemma~\ref{l613} $\epsilon_i>1$, and
from (6.9), $\sum\epsilon_i=3$. Thus $\epsilon_1=\epsilon_2=\epsilon_3=1$ and
\begin{equation}
n_i=p+\nu_i\qquad i=1,2,3.
\tag{9.4}
\end{equation}

Suppose now that $\Delta_3=\Delta_2^*$. Then $B_3=B_2^*$ and the eigenvalues
of $B_2,B_3$ must be the same. Thus $\nu_3=\nu_2$ and either
$\lambda_3=\lambda_2$, $\mu_3=\mu_2$ or $\lambda_3=\mu_2$, $\mu_3=\lambda_2$.
However, if $\lambda_3=\lambda_2$ and $\mu_3=\mu_2$ then
$U^{-1}B_3U=U^{-1}B_2U$ (where $U$ is as on p.\pageref{p19}) so that
$B_3=B_2$ which is not so. Therefore $\lambda_3=\mu_2$ and $\mu_3=\lambda_2$.
Now amalgamating colours $c_3$ and $c_2$ gives admissible adjacency matrices
$A_1=B_1$ and $A_2=B_2+B_3$, and produces the situation which we have
analysed in the proof of 9.1 above. The result is that $p=3a^2+3a+1$ and
either
\begin{itemize}
\item[(i)] $n_1=p-2a-1$, $n_2=n_3=p+a$, or
\item[(ii)] $n_1=p+2a+1$, $n_2=n_3=p-a-1$.
\end{itemize}
We wish to show now that in case (i) $a$ must be even, and in case (ii) $a$
is odd.

In case (i) the eigenvalues of the matrices $B_i$ are as shown:
\[\begin{array}{c|cccc}
&n_i&\lambda_i&\mu_i&\nu_i\\\hline
B_1&p-2a-1 & a & a & -2a-1 \\
B_2 & p+a & \lambda & \mu & a \\
B_3 & p+a & \mu & \lambda & a\\
\end{array}\]
where $\lambda+\mu=-a-1$ and, from (6.7)
\[\lambda\mu=\textstyle{\frac{1}{2}}(2p-a-a^2)
=\textstyle{\frac{1}{2}}(5a+2)(a+1).\]
One of the equations (6.8) is
\[3p(p+a)a_{223}=(p+a)^3+p\lambda\mu(\lambda+\mu)+(p-1)a^3.\]
A straightforward calculation gives
\[a_{223}=a^2+(3a)/2,\]
from which we deduce, since $a_{223}$ is an integer, that $a$ is even.

Case (ii) is similar: in this case the eigenvalues of the matrices $B_i$ are
\[\begin{array}{c|cccc}
&n_i&\lambda_i&\mu_i&\nu_i\\\hline
B_1&p+2a+1 & -a-1 & -a-1 & 2a+1 \\
B_2 & p-a-1 & \lambda & \mu & -a-1 \\
B_3 & p-a-1 & \mu & \lambda & -a-1 \\
\end{array},\]
and (6.6),(6.7) give $\lambda+\mu=1$, $\lambda\mu=\frac{1}{2}a(5a+3)$.
Then (6.8):
\[3p(p-a-1)a_{223}=(p-a-1)^3+p\lambda\mu(\lambda+\mu)-(p-1)(a+1)^3.\]
This yields $a_{223}=a^2+(a-1)/2$ and therefore $a$ must be odd. This completes
our treatment of Theorem~\ref{t92} for the case when two suborbits are paired:
in fact no further restrictions on $a$ can be deduced by the methods of \S6.

For the remainder of this section we suppose that all three non-trivial
suborbits are self-paired. Using equations (9.4) we eliminate $n_i$ from
equations (6.6),(6.7):
\begin{eqnarray*}
\lambda_i+\mu_i&=&-1-\nu_i\\
\lambda_i^2+\mu_i^2&=&-\nu_i^2+\nu_i+2p.
\end{eqnarray*}
This gives $2\lambda_i\mu_i=1+3\nu_i+2\nu_i^2-2p$, from which we deduce that
\begin{equation}
\nu_i\hbox{ is odd}
\tag{9.5}
\end{equation}
and $\lambda_i,\nu_i$ are respectively
$\frac{1}{2}\left(-1-\nu_i\pm\sqrt{4p-1-3\nu_i^2}\right)$. Adjusting the sign
of the square root where necessary, we may take it that
\begin{eqnarray*}
\lambda_i &=& \textstyle{\frac{1}{2}}(-1-\nu_i+\sqrt{4p-1-3\nu_i^2})\\
\mu_i &=& \textstyle{\frac{1}{2}}(-1-\nu_i-\sqrt{4p-1-3\nu_i^2}).
\end{eqnarray*}
The adjacency matrices $A_i$ being symmetric, we know that their eigenvalues
are real. Hence
\begin{equation}
3\nu_i^2\le4p-1.
\tag{9.6}
\end{equation}
And, from (6.9) we have
\begin{equation}
\left\{\begin{array}{rcl}
\nu_1+\nu_2+\nu_3&=&-1\\[1mm]
\sqrt{4p-1-3\nu_1^2}+\sqrt{4p-1-3\nu_2^2}+\sqrt{4p-1-3}\nu_3^2&=&0.\\
\end{array}\right.
\tag{9.7}
\end{equation}
Eliminating $\nu_3$ and rationalising gives
\begin{eqnarray*}
\nu_1^2(3\nu_2+2p+1)+\nu_1(3\nu_2^2+2p\nu_2+4\nu_2+2p+1)&&\\
+(2p+1)(\nu_2^2+\nu_2)-2p(p-1)&=&0.
\end{eqnarray*}
Now 
\[3\nu_2^2+2p\nu_2+4\nu_2+2p+1=(3\nu_2+2p+1)(\nu_2+1),\]
and therefore $3\nu_2+2p+1$ must divide $(2p+1)(\nu_2^2+\nu_2)-2p(p-1)$.
Calculating modulo $e\nu_2+2p+1$ we eliminate $p$ from the equation
\[2(2p+1)(\nu_2^2+\nu_2)-4p(p-1)=0\]
to get $3(\nu_2+1)^2(2\nu_2+1)\equiv0\pmod{3\nu_2+2p+1}$.
There is complete symmetry between $\nu_1,\nu_2,\nu_3$, and so
\begin{equation}
(2p+3\nu_i+1)\hbox{ divides } 3(\nu_i+1)^2(2\nu_i+1).
\tag{9.8}
\end{equation}

So far we have worked only with the equations (6.6),(6.7),(6.9), but at this
point we need also the cubic equations (6.8). They give, among other relations,
that $3pn_i$ divides $n_i^3+p(\lambda_i^3+\mu_i^3)+(p-1)\nu_i^3$. Substitute
for $n_i$, $\lambda_i$, $\mu_i$ in terms of $\nu_i$, divide by $p$, multiply
by $2$, and rearrange terms:
\[6(p+\nu_i)\hbox{ divides }2p^2-6p-6\nu_i^2+2\nu_i^3+(1+\nu_i)(4\nu_i^2-\nu_i+1).\]
Now calculation modulo $2(p+\nu_i)$ yields $2p^2-6p\equiv 2\nu_i(\nu_i+3)$
and after a little simplification we find that
\begin{equation}
2(p+\nu_i)\hbox{ divides }(\nu_i+1)(2\nu_i+1)(3\nu_i+1).
\tag{9.9}
\end{equation}

Since (9.7) $\nu_1+\nu_2+\nu_3=-1$ not all of $\nu_1$, $\nu_2$, $\nu_3$ can
be negative. Let us choose $b$ to be one of $\nu_1,\nu_2,\nu_3$ such that
$b\ge0$. Put
\begin{eqnarray*}
(b+1)(2b+1)(3b+1) &=& u.2(p+b)\\
(b+1)(2b+1)(3b+3) &=& v.(2p+3b+1)
\end{eqnarray*}
so that, by (9.8) and (9.9), $u$ and $v$ are integers. Subtraction gives
\[2(b+1)(2b+1) = w.2(p+b)+v(b+1)\]
where $w=v-u$. We wish to show that $w=0$. To do this, observe first that
\begin{equation}
\begin{array}{rcl}
w=v-u &=& (b+1)(2b+1)\left(\displaystyle{\frac{3b+3}{2p+3b+1}-\frac{3b+1}{2(p+b)}
}\right)\\[1mm]
&=& \displaystyle{\frac{(b+1)(2b+1)(4p-1-3b^2)}{2(2p+3b+1)(p+b)}},
\end{array}
\tag{9.10}
\end{equation}
so that $w\ge0$ by (9.6).

To obtain an upper bound for $w$ we first eliminate $v$,
\[3(b+1)^3(2b+1)=(2p+3b+1)(2(b+1)(2b+1)-2w(p+b)\]
and then put $p+b=x$:
\[4wx^2-2(b+1)(4b+2-w)x+(b+1)^2(2b+1)(3b+1)=0.\]
Since $x$ is certainly real the discriminant of this quadratic function cannot
be negative. Thus
\[4(b+1)^2(4b+2-w)^2-16w(b+1)^2(2b+1)(3b+1)\ge0,\]
and so
\begin{equation}
\begin{array}{rcl}
(4b+2-w)^2 &\ge& 4w(2b+1)(3b+1)\\
&=& w(4b+2)(6b+2).
\end{array}
\tag{9.11}
\end{equation}
However, from equation (9.10) we have that
\[w=\frac{(b+1)(2b+1)(4p-1-3b^2)}{2(2p+3b+1)(p+b)}<2b+1.\]
And now, since $w\ge0$ it follows that
\[2b+1<4b+2-w\le 4b+2.\]
Therefore (9.11) clearly implies that $w\le0$.

We have proved now that $w=0$. Consequently, by (9.10), $4p-1=3b^2$. Since
$b$ must therefore be odd we put $b=2a+1$ where $a\ge0$, and we have
$p=3a^2+3a+1$. Furthermore, if we suppose it was $\nu_1$ that was $b$ then
from (9.7),
\begin{eqnarray*}
\nu_2^2 &=& \nu_3^2\\
\nu_2 &=& \pm\nu_3.
\end{eqnarray*}
But $\nu_2+\nu_3=-1-\nu_1\ne0$ and it follows that
\[\nu_2=\nu_3=(-1-\nu_1)/2.\]
Thus $\nu_1=2a+1$, $\nu_2=\nu_3=-a-1$. Finally since by (9.5) $\nu_2$ and
$\nu_3$ must be odd, there is the restriction that $a$ is even. And
\begin{eqnarray*}
n_1 &=& p+2a+1\\
n_2=n_3 &=& p-a-1
\end{eqnarray*}
as was claimed.

\section{Cases VI, VII and VIII}

\begin{theorem}
Case VI cannot arise.
\label{t101}
\end{theorem}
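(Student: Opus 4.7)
\textit{Plan.} In Case VI we have $r=3$ and $\pi = 1+\chi_1+\chi_2$ with $\deg\chi_1 = p+1$ and $\deg\chi_2 = 2(p-1)$. Since $r = 3$, $V$ is commutative and by (6.10) the two non-principal eigenvalues $\lambda_i,\mu_i$ of $B_i$ (associated with $\chi_1$ and $\chi_2$ respectively) are distinct. Each of $\chi_1,\chi_2$ is the unique irreducible constituent of the rational character $\chi_a$ or $\chi_b$ of \S5, so each is itself rational; Lemma~\ref{l612} then forces $\lambda_i,\mu_i \in \mathbb{Q}$, and being algebraic integers they are rational integers.

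First I would eliminate the possibility that the two non-trivial suborbits are paired with each other. If $\Delta_1^* = \Delta_2$, then $1 \ne 1^*$, so (6.3) (equivalently (6.7) with $j=i=1$) gives $\tr(B_1^2) = 0$. Evaluating the same trace via the block-diagonal decomposition yields
\[\tr(B_1^2) = n_1^2 + (p+1)\lambda_1^2 + 2(p-1)\mu_1^2,\]
a sum of non-negative rational integers with $n_1 \ge 2$, hence strictly positive: contradiction. So both suborbits must be self-paired and each $B_i$ is symmetric.

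From (6.9), $\lambda_2 = -1-\lambda_1$ and $\mu_2 = -1-\mu_1$; from (6.6), $n_1 = -(p+1)\lambda_1 - 2(p-1)\mu_1$. Substituting these into the quadratic trace relation (6.7) with $j=i^*=i=1$,
\[n_1^2 + (p+1)\lambda_1^2 + 2(p-1)\mu_1^2 = 3p\,n_1,\]
and reducing modulo $p$, the cross-terms collapse to $2(\lambda_1-\mu_1)^2 \equiv 0 \pmod p$; hence $\lambda_1 \equiv \mu_1 \pmod p$. Writing $\lambda_1 = \mu_1 + kp$ and substituting back produces a quadratic in $\mu_1$ whose discriminant is, up to a positive factor, $9(3p-1)^2 - 24k^2 p(p^2-1)$. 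Requiring this to be non-negative (so that $\mu_1$ is real) gives $3(3p-1)^2 \ge 8k^2 p(p^2-1)$, which for $p \ge 5$ already fails at $k^2 = 1$ (the right side is cubic in $p$, the left merely quadratic). Therefore $k = 0$, forcing $\lambda_1 = \mu_1$ in contradiction to (6.10).

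The main obstacle is the algebraic bookkeeping in the third paragraph: one must expand the quadratic relation carefully, verify that the mod-$p$ reduction factors as a perfect square, and compute the discriminant of the resulting quadratic in $\mu_1$ after the substitution $\lambda_1 = \mu_1 + kp$. These calculations are routine but error-prone; once done, the argument closes in the same style as the treatment of Case V in Theorem~\ref{t93}.
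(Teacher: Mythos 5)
Your proof is correct, and it diverges from the paper's in an interesting way at the decisive step. The paper derives Theorem~\ref{t101} as an immediate corollary of Lemma~\ref{l104}, which forbids \emph{any} symmetric adjacency matrix (other than $W-I$) from having spectrum with multiplicities $1$, $p+1$, $2(p-1)$. The proof of that lemma begins exactly as yours does -- the same trace relations, the same reduction modulo $p$ yielding $2(\lambda_1-\mu_1)^2\equiv0$, hence $\lambda_1\equiv\mu_1\pmod p$ -- but then closes differently: it refines the computation modulo $p^2$ to force $\lambda_1\equiv-1\pmod p$, deduces $m_1\in\{p-1,2p-1\}$, and invokes the Perron--Frobenius bound $|\lambda_1|,|\mu_1|<m_1$ of Lemma~\ref{l613} to squeeze out $\lambda_1=\mu_1$. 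You instead write $\lambda_1=\mu_1+kp$, substitute back into the quadratic trace relation, and observe that the discriminant $p^2\bigl(9(3p-1)^2-24k^2p(p^2-1)\bigr)$ is negative for $k\ne0$ and $p\ge5$ because the subtracted term is cubic in $p$; I have checked this discriminant and the inequality at $p=5$, and both are right, so $k=0$ and (6.10) gives the contradiction. Your growth argument is arguably cleaner and avoids Lemma~\ref{l613} entirely. What the paper's formulation buys is reusability: Lemma~\ref{l104} is applied again in the proof of Lemma~\ref{l105} to show that in Case VII all suborbits are self-paired, whereas your argument as stated is tied to the basic adjacency matrices of a rank-$3$ group (though it would adapt, since it only uses the trace relations for the admissible pair $A_1$, $W-I-A_1$). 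Your handling of the possibility $\Delta_1^*=\Delta_2$ by direct evaluation of $\tr(B_1^2)$ is a correct substitute for the paper's appeal to (6.10) and (6.11).
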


\begin{theorem}
If $G$ is of type VII then $p$ is $7$, $19$ or $31$.
\begin{itemize}
\item[(i)] If $p=7$, the subdegrees are $1$, $4$, $8$, $8$;
\item[(ii)] if $p=19$, the subdegrees are $1$, $6$, $20$, $30$;
\item[(iii)] if $p=31$, the subdegrees are $1$, $20$, $32$, $40$.
\end{itemize}
\label{t102}
\end{theorem}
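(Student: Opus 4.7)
The plan parallels the approach of Theorems~9.1 and~9.2. Type~VII has rank~$4$ with permutation character $\pi = 1 + \chi_1 + \chi_2 + \chi_3$, where $\deg \chi_1 = p+1$ and $\deg \chi_2 = \deg \chi_3 = p-1$, all with multiplicity~$1$, so the centralizer algebra $V$ is commutative. For each non-trivial basic adjacency matrix $B_i$ ($i = 1, 2, 3$) I would denote the eigenvalues other than the subdegree $n_i$ by $\nu_i$ (associated to $\chi_1$) and $\lambda_i, \mu_i$ (associated to $\chi_2, \chi_3$). Since $\chi_1$ is the unique constituent of its degree it is rational and $\nu_i \in \mathbb{Z}$; by Lemma~6.12 the pair $\lambda_i, \mu_i$ is either both rational or an algebraic-conjugate pair, so the sum $\sigma_i := \lambda_i + \mu_i$ is always a rational integer.

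The first step is to dispose of the case in which two non-trivial suborbits are paired. Suppose $\Delta_2^* = \Delta_3$. Then, as in the argument for Theorem~9.2, amalgamating the colours $c_2, c_3$ gives an admissible set $A_1 = B_1$, $A_2 = B_2 + B_3$ of rank~$3$ whose character structure collapses $\chi_2$ and $\chi_3$ together, producing effective character degrees $1, p+1, 2(p-1)$. But this is precisely type~VI, which Theorem~10.1 has already ruled out. Hence all three non-trivial suborbits must be self-paired, each $B_i$ is symmetric, and $\nu_i, \lambda_i, \mu_i$ are all real.

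Next I would deploy the trace relations. Equation~(6.6) yields
\[n_i + (p+1)\nu_i + (p-1)\sigma_i = 0;\]
setting $\delta_i := -(\nu_i + \sigma_i)$ this becomes $n_i = \delta_i p + (\sigma_i - \nu_i)$, and combining with the constraints $\sum \nu_i = -1$ and $\sum \sigma_i = -2$ from (6.9) gives $\sum \delta_i = 3$. From the quadratic relation~(6.7) applied to $\tr(B_i^2) = 3p \cdot n_i$ I would extract $\lambda_i \mu_i$ as an explicit polynomial in $n_i, \nu_i, \sigma_i, p$; reality of $\lambda_i, \mu_i$ then enforces the discriminant inequality $\sigma_i^2 \geq 4\lambda_i \mu_i$. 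The cubic relations~(6.8) impose further divisibility constraints through the non-negative integrality of the structure constants $a_{ijk}$. In the spirit of the $w = v - u$ squeeze in the proof of Theorem~9.2~(iii), I would combine these divisibility and reality constraints to force $p$ into the finite set $\{7, 19, 31\}$; thereafter the three subdegrees follow in each case by direct substitution.

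The main obstacle will be the diophantine analysis itself. Relative to Theorem~9.2 there is a substantial loss of symmetry: the three $B_i$ play genuinely distinct roles, and the six parameters $\nu_1, \nu_2, \nu_3, \sigma_1, \sigma_2, \sigma_3$ are constrained only by the two linear relations above, leaving four degrees of freedom to be pinned down. I expect to need a careful case analysis on the signs of the $\nu_i$ and $\delta_i$, and on coincidences among the $\sigma_i$, before the cubic divisibility conditions can be made to bite. A further subtlety is that it is not immediate that $\delta_i \geq 1$ for each $i$: Lemma~6.13 alone gives $|\nu_i|, |\lambda_i|, |\mu_i| < n_i$ and hence only $|\sigma_i - \nu_i| < 3n_i$, which is not strong enough, so a supplementary spectral argument or further exploitation of~(6.7) is likely required. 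Given this complexity it would be unsurprising if, as with Stoy's contribution to Theorem~9.2, ruling out further candidates (and confirming $p = 31$ in particular) calls for computer assistance at the tail end.
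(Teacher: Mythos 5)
The core of this theorem is the diophantine analysis that forces $p\in\{7,19,31\}$ and produces the subdegrees, and your proposal does not carry it out: you set up the trace relations and then describe, conditionally, how you \emph{hope} they can be combined, conceding that the route may not close without computer assistance. Two concrete points show the plan as written would not close. First, the bound $\delta_i\ge1$ that you flag as ``likely required'' is in fact false: for $p=7$ the matrix $B_1$ has $n_1=4$, $\nu_1=-2$ (the eigenvalue of multiplicity $p+1$) and $\sigma_1=(1+\sqrt2)+(1-\sqrt2)=2$, whence $\delta_1=0$. The correct analogue of the Theorem~9.2 argument yields only $\epsilon_i\ge0$ together with $\sum\epsilon_i=3$, and the cases $p=7$ and $p=19$ live precisely in the degenerate branch where some $\epsilon_i=0$; that branch must be analysed separately, not excluded. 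Second, your normal form reduces the linear relation modulo $p$, whereas the leverage in the actual proof comes from reducing modulo $p-1$, writing $m_i=\epsilon_i(p-1)-2\lambda_i$ with $\lambda_i$ the eigenvalue of multiplicity $p+1$. This yields the divisibility $p-1\mid 3\lambda_i(\lambda_i+1)$ (Lemma~\ref{l107}); the integers $a_i=3\lambda_i(\lambda_i+1)/(p-1)$ are then bounded by $4$ via a discriminant inequality (Lemma~\ref{l108}), and a short case analysis on the $a_i$ pins down $p$ with no computer search. Your mod-$p$ parametrization does not obviously produce a comparably strong divisibility, so the proposal is not merely unfinished but pointed in a direction that would need reworking.

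On the preliminary reduction: your amalgamation of $c_2,c_3$ is morally the right move but is mis-cited and under-justified. After amalgamation the group is still of Type VII --- combining colours does not change the decomposition of $\pi$ --- so Theorem~\ref{t101} (a statement about groups whose permutation character has the Type VI decomposition) cannot be invoked. What is needed is the lemma underlying it, Lemma~\ref{l104}: one must show that $A_2=B_2+B_2^{\top}$ is a symmetric adjacency matrix whose eigenvalues on the two $(p-1)$-dimensional isotypic spaces coincide (because passing from $B_2$ to $B_2^{\top}$ swaps them, since otherwise $B_2=B_3$), so that $A_2$ has just three eigenvalues with multiplicities $1$, $p+1$, $2(p-1)$, which Lemma~\ref{l104} forbids. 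You assert this ``collapse'' without proof. The paper's own route is equivalent but runs through $B_1$: the swap forces $\mu_1=\nu_1$, and Lemma~\ref{l104} is applied to $B_1$ instead.
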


Examples 3 and 4 (page~\pageref{p2}) are instances with $p=7$, $19$
respectively. I do not know if a primitive group of Type VII and degree~$93$
exists.

\begin{theorem}
Case VIII cannot arise.
\label{t103}
\end{theorem}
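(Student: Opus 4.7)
The plan is to adapt the proof of Theorem~\ref{t93}. In Case VIII, $\pi = 1_G + \chi_1 + 2\chi_2$ with $\deg\chi_1 = p+1$, $\deg\chi_2 = p-1$, rank $6$, and five non-trivial basic adjacency matrices $B_1,\ldots,B_5$. Since $\chi_1$ has multiplicity one and $\chi_2$ multiplicity two, $\Theta_{i,1}$ is a rational integer scalar $\nu_i$ (an eigenvalue of $B_i$ of multiplicity $p+1$), while $\Theta_{i,2}$ is $2\times 2$ contributing two eigenvalues $\lambda_i,\mu_i$ of $B_i$ of multiplicity $p-1$ each; put $s_i = \tr\Theta_{i,2} = \lambda_i + \mu_i \in \mathbb{Z}$.

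Equation (6.6) reads $n_i + (p+1)\nu_i + (p-1)s_i = 0$, which rearranges to $n_i = \epsilon_i p + (s_i - \nu_i)$ with $\epsilon_i := -(\nu_i + s_i) \in \mathbb{Z}$. Applying (6.9) in each of the two non-principal isotypic components gives $\sum_i \nu_i = -1$ and $\sum_i s_i = -2$, and hence $\sum_i \epsilon_i = 3$. The target contradiction is $\sum_i \epsilon_i \ge 5$, to be reached by proving $\epsilon_i \ge 1$ for every $i$.

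This last step is the main obstacle. In Case V the analogous bound was immediate from Lemma~\ref{l613}, because the residue in $n_i = \epsilon_i p + \nu_i$ was a \emph{single} eigenvalue $\nu_i$ and $|\nu_i| < n_i$ instantly forced $\epsilon_i p > 0$. Here the residue $s_i - \nu_i$ mixes eigenvalues from both non-principal constituents, and the naive triangle bound $|s_i - \nu_i| < 3n_i$ is too weak. To close the gap I would invoke the quadratic trace relation (6.7) for $j = i^*$. In the self-paired case $B_i$ is symmetric, so $\lambda_i,\mu_i$ are real, and a basis diagonalising $\Theta_{i,2}$ turns (6.7) into
\[n_i^2 + (p+1)\nu_i^2 + (p-1)(\lambda_i^2 + \mu_i^2) = 3pn_i.\]
Eliminating $s_i$ via the linear relation and using the non-negativity of the discriminant of $x^2 - s_i x + \lambda_i\mu_i = 0$, together with integrality of $\nu_i$, should pin the admissible $(\epsilon_i,\nu_i,s_i)$ triples down to those with $\epsilon_i \ge 1$. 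For a paired orbit $\Delta_{i^*} = \Delta_j$ the same analysis is applied to the admissible amalgamated matrix $A = B_i + B_j$ of subdegree $2n_i$, whose $\chi_2$-block has eigenvalues $\lambda_i + \lambda_j$, $\mu_i + \mu_j$.

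I expect the trickiest point to be handling the three pairing configurations of the five suborbits uniformly (all self-paired; one pair and three fixed; two pairs and one fixed), particularly when some suborbit has $n_i$ small enough that the quadratic relation becomes delicate. Any residual borderline cases should be eliminated using the cubic relations (6.8) as divisibility conditions on the structure constants $a_{ijk}$, exactly the technique used in the second half of the proof of Theorem~\ref{t92}. The overall mechanism mirrors Case V: primitivity (Lemma~\ref{l613}) controls the non-principal eigenvalues, which via (6.6) controls the integers $\epsilon_i$, which via the trace sum $\sum_i \epsilon_i = 3$ over five suborbits contradicts $\epsilon_i \ge 1$.
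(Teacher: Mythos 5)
Your reduction is set up correctly, and your integer $\epsilon_i=-(\nu_i+s_i)$ is in fact the same quantity the paper works with (there written as $m_i=\epsilon_i(p-1)-2\lambda_i$), with the same conclusion $\sum\epsilon_i=3$. But the step you yourself flag as the main obstacle is not merely hard -- it is false. One cannot prove $\epsilon_i\ge1$ for every suborbit, because the constraint system you propose to use (the linear and quadratic trace relations, reality/discriminant of the $2\times2$ block, integrality of $\nu_i$ and $s_i$, Lemma~\ref{l613}) admits solutions with $\epsilon_i=0$, and these are realised by actual primitive groups: in the Type VII data for $p=7$ (Example~3) the matrix $B_1$ has subdegree $4$ with multiplicity-$(p+1)$ eigenvalue $-2$ and trace $2$ on the $\chi_2$-block, giving $\epsilon_1=0$; similarly for $p=19$. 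The local analysis of a single symmetric adjacency matrix with eigenvalue multiplicities $1,p+1,p-1,p-1$ is identical in cases VII and VIII, so no amount of local work -- including the cubic relations, which you hope will mop up ``residual borderline cases'' -- can exclude $\epsilon_i=0$. The sharp provable bound is $\epsilon_i\ge0$ (Lemma~\ref{l106}), and with five suborbits that yields no contradiction with $\sum\epsilon_i=3$.

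The paper's actual route is global and goes through the $\epsilon_i=0$ case rather than around it. Treating VII and VIII together, it amalgamates each pair of non-self-paired colours into a symmetric matrix $A_i$ and records that for such an amalgamated matrix $\epsilon_i$, $m_i$ and $\lambda_i$ (the multiplicity-$(p+1)$ eigenvalue) are all \emph{even}. Since in case VIII there are five basic suborbits but $\sum\epsilon_i=3$ with $\epsilon_i\ge0$, some $\epsilon_i$ vanishes; the divisibility condition $p-1\mid3\lambda_i(\lambda_i+1)$ (Lemma~\ref{l107}) and the discriminant inequality (Lemma~\ref{l108}) then force $p\in\{7,19\}$ with $(m_1,\lambda_1)=(4,-2)$ or $(6,-3)$, and a counting argument shows exactly one $A_i$ has $\epsilon_i=0$ and that this $A_i$ must arise from amalgamating a paired pair. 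That forces $\lambda_1$ even, whence $p=7$ and the original paired suborbits have length $m_1/2=2$, impossible in a primitive group. So the contradiction comes from the parity of amalgamated matrices together with the bound on suborbits of length $2$, not from an improved lower bound on $\epsilon_i$. Your proposal as it stands has a genuine gap at its central step and would need to be replaced by an argument of this kind.
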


We begin the proofs with a lemma:

\begin{lemma}
No symmetric matrix for $G$ (other than $W-I$) has just three eigenvalues
with multiplicities $1$, $p+1$, $2(p-1)$.
\label{l104}
\end{lemma}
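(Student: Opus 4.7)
The plan is to derive a single diophantine constraint from the linear and quadratic trace relations, and to show that the only symmetric $A$ satisfying it is $W - I$.

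First I argue that the three hypothesised eigenvalues are rational integers. Let $m, \lambda, \mu$ be the eigenvalues of $A$ with multiplicities $1, p+1, 2(p-1)$ respectively. Since $A$ has integer entries its characteristic polynomial lies in $\mathbb{Z}[x]$, so $\lambda, \mu$ are algebraic integers. The absolute Galois group of $\mathbb{Q}$ permutes the eigenvalues of $A$ preserving their multiplicities; since $p \geq 5$ the three multiplicities $1, p+1, 2(p-1)$ are pairwise distinct, so each of $m, \lambda, \mu$ is individually Galois-fixed, whence $\lambda, \mu \in \mathbb{Z}$.

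Next I exploit the trace relations. With $n = 3p$, the symmetric forms of (6.6) and (6.7) read
\[ m + (p+1)\lambda + 2(p-1)\mu = 0, \qquad m^2 + (p+1)\lambda^2 + 2(p-1)\mu^2 = 3pm. \]
Eliminating $\lambda$ via the linear relation and completing the square in $\mu$ yields the key identity
\[ 2(p-1)\bigl[(3p-1)\mu + m\bigr]^2 = 3p(p+1)\,m\,(3p-1-m). \]
Set $\alpha := (3p-1)\mu + m \in \mathbb{Z}$. Since $p \nmid 2(p-1)$ we get $p \mid \alpha$; writing $\alpha = p\beta$ gives $2(p-1)p\beta^2 = 3(p+1)\,m\,(3p-1-m)$. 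Reducing modulo $p$ forces $p \mid m(m+1)$, so $m \in \{p-1, p, 2p-1, 2p, 3p-1\}$.

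Finally I examine each case. For $m = 3p - 1$, $\alpha = 0$ gives $\mu = -1$ and hence $\lambda = -1$ from the linear relation; but the only symmetric $0$-$1$ matrix with zero diagonal and row sum $3p - 1$ is $W - I$. For $m \in \{p, 2p - 1\}$ the identity reduces to $2(p-1)\beta^2 = 3(p+1)(2p-1)$; since $\gcd(p-1, 2p-1) = 1$ and $3(p+1) \equiv 6 \pmod{2(p-1)}$, this forces $(p-1) \mid 6$, so only $p = 7$ survives, and then $\beta^2 = 26$ is not a square. For $m \in \{p-1, 2p\}$ the identity reduces to $\beta^2 = 3(p+1)$, forcing $p = 3c^2 - 1$ with $\beta = 3c$; integrality of $\mu = (\pm p\beta - m)/(3p - 1)$ demands $(3p-1) \mid (\pm p\beta - m)$, which after multiplying by $3$ and reducing modulo $3p - 1 = (3c-2)(3c+2)$ (coprime to $3$) collapses in each of the four subcases to $(3c + 2) \mid 1$ or $(3c - 2) \mid 1$, impossible for $c \geq 2$ (i.e.\ $p \geq 5$).

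The main obstacle is the last case, which requires tracking signs through the factorisation $3p - 1 = (3c-2)(3c+2)$; however, each subcase collapses to the same impossibility. All other steps are routine computation.
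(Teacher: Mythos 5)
Your proof is correct, but it follows a genuinely different route from the paper's. The paper introduces the complementary matrix $A_2=W-I-A_1$ (also a symmetric adjacency matrix with the same multiplicities), notes $\lambda_i\ne\mu_i$ from simultaneous diagonalisability and linear independence of $I,A_1,A_2$, then works modulo $p$ and modulo $p^2$ in the two trace relations to force $\lambda_1\equiv-1\pmod p$, hence $m_1\in\{p-1,2p-1\}$, and finishes by invoking the Perron--Frobenius bound of Lemma~\ref{l613} to squeeze $\lambda_1=\mu_1$ or $\lambda_2=\mu_2$, a contradiction. You instead work with $A$ alone: your identity $2(p-1)\bigl[(3p-1)\mu+m\bigr]^2=3p(p+1)m(3p-1-m)$ checks out, it yields $m\in\{p-1,p,2p-1,2p,3p-1\}$, and your case-by-case eliminations (I verified the residues $\pm3c\pm2$ modulo $(3c-2)(3c+2)$ in the hardest case) are all sound. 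What your approach buys is independence from the complement trick and from Lemma~\ref{l613}(ii) --- it is essentially a statement about regular symmetric $0$--$1$ matrices with the given spectrum, so it generalises more readily to an association-scheme setting; what it costs is a heavier five-way case analysis where the paper gets two cases and a one-line finish. Two small points: you do still need Lemma~\ref{l613}(i) (or connectivity) to know that the multiplicity-$1$ eigenvalue is the row sum $m$, which you use silently throughout; and the congruence ``$3(p+1)\equiv6\pmod{2(p-1)}$'' is false as written (it holds only modulo $p-1$), but the conclusion $(p-1)\mid6$ is correct --- reduce $3(p+1)(2p-1)\equiv6\pmod{p-1}$ instead --- so only $p=7$ survives and $\beta^2=26$ kills it as you say. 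Neither point is a genuine gap.
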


\begin{proof}
Suppose on the contrary that $A_1$ is a symmetric adjacency matrix whose
eigenvalues $m_1$ (the subdegree), $\lambda_1$, $\mu_1$ have multiplicities
$1$, $p+1$, $2(p-1)$ respectively. Let $A_2=W-I-A_1$. Then $A_2$ is also a
symmetric adjacency matrix whose eigenvalues $m_2$, $\lambda_2$, $\mu_2$ have
multiplicities $1$, $p+1$, $2(p-1)$, and $\{A_1,A_2\}$ is an admissible set.

Furthermore, $A_1$ and $A_2$ commute and therefore there is a matrix $U$ such
that $U^{-1}A_1U$, $U^{-1}A_2U$ are both diagonal. Now if $\lambda_1=\mu_1$
then $\lambda_2=\mu_2$ and $I$, $A_1$, $A_2$ would not be linearly independent.
But in fact $I$, $A_1$, $A_2$ are linearly independent (we are assuming that
$A_1\ne W-I$, $A_1\ne I$) and it must follow that $\lambda_1\ne\mu_1$,
$\lambda_2\ne\mu_2$/ Notice that $\lambda_i$, $\mu_i$ are rational integers.

Equations (6.6),(6.7) give
\begin{equation}
\left.
\begin{array}{rcl}
m_i+(p+1)\lambda_i+2(p-1)\mu_i&=&0\\[1mm]
m_i^2+(p+1)\lambda_i^2+2(p-1)\mu_i^2&=&3pm_i
\end{array}
\right\}.
\tag{10.4.1}
\end{equation}
Calculating modulo $p$:
\[
\left.
\begin{array}{rcl}
m_i&\equiv&2\mu_i-\lambda_i\\
m_i^2&\equiv&2\mu_i^2-\lambda_i^2
\end{array}
\right\}
\pmod{p}.
\]
Thus
\[2\mu_i-\lambda_i^2\equiv 2\mu_i^2-\lambda_i^2\pmod{p}\]
and this gives
\[
\left.
\begin{array}{rccl}
\hbox{and this gives }&mu_i&\equiv&\lambda_i\\
\hbox{hence also }&m_i&\equiv&\lambda_i
\end{array}
\right\}
\pmod{p}.
\]
Put $m_i=\epsilon_ip+\lambda_i$, $\mu_i=\eta_ip+\lambda_i$, substitute
in (10.4.1) and compute modulo $p^2$:
\[
\left.
\begin{array}{rcl}
\epsilon_ip+\lambda_i+(p+1)\lambda_i+2(p-1)\lambda_i-2\eta_ip&\equiv&0\\
2\epsilon_ip\lambda_i+\lambda_i^2+(p+1)\lambda_i^2+2(p-1)\lambda_i^2
-4\lambda_i\eta_ip&\equiv&3p\lambda_i
\end{array}
\right\}
\pmod{p^2}.
\]
Collect terms and divide by $p$:
\[
\left.
\begin{array}{rcl}
\epsilon_i+3\lambda_i-2\eta_i&\equiv&0\\
3\lambda_i^2+\lambda_i(2\epsilon_i-4\eta_i-3)&\equiv&0
\end{array}
\right\}
\pmod{p}.
\]
Since $1+\lambda_1+\lambda_2=0$ not both $\lambda_1$ and $\lambda_2$ are
divisible by $p$. and interchanging $A_1$ and $A_2$ if necessary, we may
therefore assume that $\lambda_1\not\equiv0\pmod{p}$. Then
\[
\left.
\begin{array}{rccl}
&3\lambda_1&\equiv&2\eta_1-\epsilon_1\\
\hbox{and }&3\lambda_1&\equiv&4\eta_1-2\epsilon_1+3
\end{array}
\right\}
\pmod{p}.
\]
Eliminating $2\eta_1-\epsilon_1$ gives $\lambda_1\equiv-1\pmod{p}$. It
follows that $m_1=p-1$ or $m_1=2p-1$. If $m_1=p-1$, then, since
$\lambda_1\equiv\mu_1\equiv-1\pmod{p}$ and by Lemma~\ref{l613}
$|\lambda_1|<m_1$, $|\mu_1|<m_1$, the only possibility is
$\lambda_1=\mu_1=-1$, which contradicts our earlier observation that
$\lambda_1\ne\mu_1$. But, if $m_1=2p-1$ then $m_2=p$, and again there is
only one possibility, namely $\lambda_2=\mu_2=0$, consistent with
Lemma~\ref{l613}. This also gives a contradiction, and the lemma is proved.
\end{proof}

\paragraph{Proof of 10.1} In case VI the basic adjacency matrices would have
three eigenvalues with multiplicities $1$, $p+1$, $2(p-1)$, and they would be
symmetric by (6.10) and (6.11). Thus the lemma shows that case VI cannot arise.

\begin{lemma}
If VII is the case then all three non-trivial suborbits are self-paired.
\label{l105}
\end{lemma}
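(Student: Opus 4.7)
The plan is to argue by contradiction. Suppose two of the three non-trivial suborbits are paired with each other; after relabelling, $\Delta_i^*=\Delta_j$ for distinct $i,j\in\{1,2,3\}$ while $\Delta_k$ is self-paired, where $\{i,j,k\}=\{1,2,3\}$. Then $A:=B_i+B_j$ is symmetric and $\{I,A,B_k\}$ is an admissible set of adjacency matrices. The goal is to show that $A$ has exactly three distinct eigenvalues with multiplicities $1$, $p+1$, $2(p-1)$; this will contradict Lemma~\ref{l104}, the hypothesis $A\ne W-I$ being automatic because $B_k\ne 0$.

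First I would analyse the Galois action on the character decomposition. In case~VII, $\pi=1+\chi_1+\chi_2+\chi_3$ with $\deg\chi_1=p+1$ and $\deg\chi_2=\deg\chi_3=p-1$. As $\pi$ is rational, complex conjugation fixes $\chi_0$ and the unique degree-$(p+1)$ constituent $\chi_1$, and either fixes both of $\chi_2,\chi_3$ or swaps them. I need to exclude the first alternative. Since every $e_\lambda=1$, each basic adjacency matrix $B_i$ acts on the isotypic component $E(\chi_\lambda)$ as a scalar $\theta_{i\lambda}$; and because the isotypic components are mutually orthogonal for the standard Hermitian inner product (the permutation representation being unitary), $B_i^\top$ acts on $E(\chi_\lambda)$ as $\overline{\theta_{i\lambda}}$. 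If every $\chi_\lambda$ were self-conjugate, each $E(\chi_\lambda)$ would be stable under complex conjugation; the orthogonal projector $E_\lambda$ would then be a real matrix (seen most cleanly from the formula $E_\lambda=(f_\lambda/|G|)\sum_g \chi_\lambda(g)g$), forcing each scalar $\theta_{i\lambda}$ to be real. But then $B_j=B_i^\top=\sum_\lambda\overline{\theta_{i\lambda}}E_\lambda=B_i$, contradicting that $B_i$ and $B_j$ are distinct basic adjacency matrices. Hence $\overline{\chi_2}=\chi_3$.

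With $E(\chi_3)=\overline{E(\chi_2)}$, the real symmetric matrix $A$ has its (real) eigenvalue on $E(\chi_2)$ equal to its own complex conjugate, which is the eigenvalue on $E(\chi_3)$; so these coincide, and the four a priori eigenvalues of $A$ collapse to at most three distinct values, with multiplicities $1$, $p+1$, $2(p-1)$ on $E(\chi_0)$, $E(\chi_1)$, and $E(\chi_2)\oplus E(\chi_3)$ respectively. A routine check confirms that these three values really are distinct: Lemma~\ref{l613} separates the subdegree $m=n_i+n_j$ from the other two, and a coincidence of the remaining pair would force $A\in\langle I,W\rangle$ and hence (being a $0/1$-matrix with zero diagonal) $A=W-I$, which would give $B_k=0$. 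Lemma~\ref{l104} now delivers the desired contradiction. The main obstacle is the self-conjugate sub-case addressed above: translating the purely character-theoretic reality hypothesis into the matrix identity $B_i^\top=B_i$ via the real structure on the projectors $E_\lambda$ is the one step not covered by the routine manipulations of \S6.
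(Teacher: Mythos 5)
Your argument is correct, and it converges on the same contradiction as the paper — a symmetric adjacency matrix with eigenvalue multiplicities $1$, $p+1$, $2(p-1)$, forbidden by Lemma~\ref{l104} — but it gets there by a genuinely different route. The paper stays entirely at the level of the basic matrices: writing $B_3=B_2^\top$, it notes that $B_2$ and $B_3$ have the same characteristic polynomial yet $U^{-1}B_3U\ne U^{-1}B_2U$, so their two multiplicity-$(p-1)$ eigenvalues must be crossed, $\mu_3=\nu_2$, $\nu_3=\mu_2$; the two sum relations $\sum\mu_i=\sum\nu_i=-1$ from (6.9) then force $\mu_1=\nu_1$, and it is the self-paired \emph{basic} matrix $B_1$ that violates Lemma~\ref{l104}. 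You instead apply Lemma~\ref{l104} to the complementary amalgamated matrix $A=B_i+B_j$ and obtain the coincidence of its eigenvalues on $E(\chi_2)$ and $E(\chi_3)$ from the Galois action: your preliminary step — that if all constituents of a multiplicity-free $\pi$ were real then every $B_i$ would be symmetric, via the reality of the orthogonal projectors $E_\lambda$ — is sound (and is a clean general fact about coherent configurations not isolated in the paper; it could also be phrased through Lemma~\ref{l612} with $\gamma$ complex conjugation), after which the realness of the eigenvalues of the symmetric $A$ does the rest. The two derivations are equivalent in effect, since by (6.9) the eigenvalues of $B_1$ and of $B_2+B_3$ on $E(\chi_2)$, $E(\chi_3)$ coincide simultaneously; yours buys a more conceptual derivation of the crossing, avoiding the slightly delicate matching of eigenvalues of $B_2$ and $B_2^\top$ through the characteristic polynomial, at the cost of the extra step ruling out the case that $\chi_2$ and $\chi_3$ are both real — a case the paper's transpose argument never has to consider. (Your closing ``routine check'' that the three eigenvalues of $A$ are genuinely distinct is correct but not needed: the proof of Lemma~\ref{l104} already disposes of the degenerate coincidence via the linear independence of $I$, $A_1$, $A_2$.)
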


\begin{proof}
If not then the three basic adjacency matrices would be $B_1,B_2,B_3$ where
$B_3=B_2^\top$ say. Then the characteristic equation of $B_3$ would be the
same as that of $B_2$ and so the eigenvalues of $B_3$ would be the same as
those of $B_2$ with the same multiplicities. Since
$U^{-1}B_3U\ne U^{-1}B_2U$ (where $U$ is on p.\pageref{p19}) it would follow
that $\mu_3=\nu_2$ and $\mu_2=\nu_3$. But then, since
\[\begin{array}{rrcl}
&\mu_1+\mu_2+\mu_3&=&-1\\
\hbox{and }&\nu_1+\nu_2+\nu_3&=&-1
\end{array}\]
we would have $\mu_1=\nu_1$. Thus $B_1$ would be a matrix of the kind which
Lemma~\ref{l104} forbids. Hence all three non-trivial suborbits are
self-paired.
\end{proof}

From here on we treat cases VII and VIII together. In both cases the linear
trace relation for an adjacency matrix $A$ becomes
\[m+(p+1)\lambda+(p-1)(\mu+\nu)=0.\]
Now $\lambda$ is a rational integer and $\mu,\nu$ are algebraic integers. We
know that for a suitable invertible matrix $U$,
\[U^{-1}B_iU=\begin{pmatrix}
n_i&&\\
&p+1\left\{\begin{matrix}\lambda_i&&\\&\ddots&\\&&\lambda_i\end{matrix}\right.
\\
&&\Theta_i\end{pmatrix}\]
amd therefore if we form a new adjacency matrix by amalgamating colours then we
must add the appropriate eigenvalues $\lambda_i$ to get the corresponding
eigenvalue $\lambda$ of $A$. In particular, if $\Delta_i$ is not self-paired
then we amalgamate the colours $c_i,c_{i^*}$ to obtain a symmetric matrix
whose eigenvalues include $2\lambda_i$ with multiplicity $p+1$, and 
$m_i=2n_i=\epsilon_i(p-1)-2\lambda_i$ where $\epsilon_i=2\eta_i$.

For the rest of this section we use the following notation:
$A_1,\ldots,A_{t-1}$ are the symmetric adjacency matrices obtained by 
amalgamating paired colours; the eigenvalues of $A_i$ are $m_i,\lambda_i,
\mu_i,\nu_i$ with multiplicities $1,p+1,p-1,p-1$ respectively; and $\epsilon_i$
is defined by the equation
\[m_i=\epsilon_i(p-1)-2\lambda_i.\]
We recall that $\lambda_i$ is a rational integer, $\mu_i,\nu_i$ are algebraic
integers and since $A_i$ is symmetric they are real. Moreover, if $A_i$ is not
a basic adjacency matrix, that is, if $A_i$ has arisen from an amalgamation of
paired colours (by \ref{l105} this can happen only in case VIII), then
$\epsilon_i,m_i,\lambda_i$ are all even. In case VII there are $3$ such
matrices and they are all basic (by Lemma~\ref{l105}); in case VIII there may
be $3$, $4$ or $5$ matrices. In any case, $\{A_1,\ldots,A_{t-1}\}$ is an
admissible set of adjacency matrices.

\begin{lemma}
$\sum\epsilon_i=3$ and $\epsilon_i\ge0$ for all $i$.
\label{l106}
\end{lemma}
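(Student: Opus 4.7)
The plan is to handle the two claims in turn, the first being a one-line consequence of the linear trace relations and the second a short quadratic argument based on the reality of $\mu_i$ and $\nu_i$.

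For $\sum\epsilon_i=3$, I would sum the linear trace relation~(6.6), written as $m_i+(p+1)\lambda_i+(p-1)(\mu_i+\nu_i)=0$, over $i\ge1$. By~$(6.5^\#)$ applied to $W-I=\sum_{i\ge1}A_i$ the row sums satisfy $\sum_i m_i=n-1=3p-1$, while taking the trace of~(6.9) on the constituent of degree~$p+1$ (which has multiplicity~$1$ in both Types~VII and~VIII) yields $\sum_i\lambda_i=-1$. Combined with the defining relation $m_i=\epsilon_i(p-1)-2\lambda_i$ this gives $(p-1)\sum_i\epsilon_i-2\sum_i\lambda_i=3p-1$, hence $\sum_i\epsilon_i=3$.

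For $\epsilon_i\ge0$ I would argue by contradiction, supposing $\epsilon_i=-k$ for some integer $k\ge1$. Since $A_i$ is symmetric, $\mu_i$ and $\nu_i$ are real, so $(\mu_i-\nu_i)^2\ge0$. From~(6.6) one has $\mu_i+\nu_i=-\epsilon_i-\lambda_i$, and~(6.7) together with $2\mu_i\nu_i=(\mu_i+\nu_i)^2-(\mu_i^2+\nu_i^2)$ expresses $\mu_i\nu_i$ as a polynomial in $m_i$ and $\lambda_i$. After the substitution $m_i=\epsilon_i(p-1)-2\lambda_i$, the inequality $(\mu_i+\nu_i)^2\ge4\mu_i\nu_i$ becomes a quadratic in $\lambda_i$ with positive leading coefficient $3(p+3)$, whose discriminant a direct calculation reduces to
\[
24p\bigl[\,6p-(p^2-1)\,\epsilon_i(\epsilon_i-3)\,\bigr].
\]
With $\epsilon_i=-k$ this equals $24p[6p-(p^2-1)k(k+3)]$. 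Since $k(k+3)\ge4$ and $4(p^2-1)>6p$ for $p\ge3$ (equivalently $(2p+1)(p-2)>0$), the discriminant is strictly negative, so the quadratic is strictly positive for every real $\lambda_i$, forcing $(\mu_i-\nu_i)^2<0$, a contradiction.

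The chief obstacle will be the symbolic expansion leading to the clean factored discriminant displayed above; everything else is routine bookkeeping. It is worth noting that the factor $\epsilon_i(\epsilon_i-3)$ vanishes exactly at $\epsilon_i\in\{0,3\}$, at which values the discriminant is positive $(=144p^2)$ and the inequality is satisfiable; this explains why Lemma~10.6 asserts only $\epsilon_i\ge0$ and not the stronger $\epsilon_i\ge1$, and also why the hypothesis $p\ge5$ is needed (the key estimate $(2p+1)(p-2)>0$ fails at $p=2$).
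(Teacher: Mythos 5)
Your proof is correct. The first claim is established exactly as in the paper: summing $m_i=\epsilon_i(p-1)-2\lambda_i$ over $i$, using $\sum m_i=3p-1$ and $\sum\lambda_i=-1$ from (6.9). For the second claim, however, you take a genuinely different route. The paper argues from primitivity: if $\epsilon_i<0$ then $\lambda_i<0$, and the Perron--Frobenius bound $|\lambda_i|<m_i=\epsilon_i(p-1)-2\lambda_i$ of Lemma~6.13 forces $|\lambda_i|\ge p$; on the other hand the quadratic trace relation (6.7), used only via $\mu_i^2+\nu_i^2\ge0$, gives $(p+1)\lambda_i^2\le m_i(3p-m_i)\le(3p/2)^2$, hence $|\lambda_i|<3\sqrt{p}/2<p$, a contradiction. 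You instead use the sharper reality condition $(\mu_i-\nu_i)^2\ge0$ --- which is precisely the inequality the paper records separately as Lemma~10.8 --- and compute the discriminant of the resulting quadratic in $\lambda_i$. I have checked your factorisation: the discriminant is indeed $24p\bigl[6p-(p^2-1)\epsilon_i(\epsilon_i-3)\bigr]$, and it is negative for $\epsilon_i\le-1$ since $\epsilon_i(\epsilon_i-3)\ge4$ and $4(p^2-1)>6p$. Your approach costs a heavier symbolic expansion but buys two things: it dispenses with Lemma~6.13 entirely (so the conclusion holds for any symmetric admissible set with this spectrum, without invoking primitivity), and it proves Lemma~10.8 en route while making transparent, via the factor $\epsilon_i(\epsilon_i-3)$, exactly why $\epsilon_i=0$ cannot be excluded --- a case that genuinely occurs and is handled in Lemma~10.9. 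The paper's argument is lighter on computation and is the more natural one given that Lemma~6.13 has already been set up as the standing ``filter'' for primitivity.
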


\begin{proof} 
\begin{eqnarray*}
3p-1&=&\sum m_i\\
&=&(p-1)\sum\epsilon_i-2\sum\lambda_i
\end{eqnarray*}
amd $\sum\lambda_i=-1$ by (6.9). Hence $(p-1)(\sum\epsilon_i)=3p-3$ and so
$\sum\epsilon_i=3$.

To prove the second assertion suppose on the contrary that $\epsilon_i<0$.
Then, since $m_i\ge0$ we must have $\lambda_i<0$ and (Lemma~\ref{l613})
\[-\lambda_i<\epsilon_i(p-1)-2\lambda_i\]
so that $\lambda_i<\epsilon_i(p-1)$.
In particular, in this case $|\lambda_i|>p-1$ and so $|\lambda_i|\ge p$.

Since $\mu_i,\nu_i$ are real, $\mu_i^2\ge0$ and $\mu_i^2\ge0$. Therefore from
the quadratic trace relation (6.7) we get the inequality
\[m_i^2+(p+1)\lambda_i^2\le3pm_i,\]
that is,
\[(p+1)\lambda_i^2\le m_i(3p-m_i)\le (3p/2)^2.\]
Hence $\lambda_i^2<9p/4$ and so $|\lambda_i|<3\sqrt{p}/2<p$.
This contradicts our earlier inequality and proves the lemma.
\end{proof}

Substituting $-2\lambda_i$ for $m_i$ in the equation (6.7) and calculating
modulo $p-1$ easily yields that $p-1$ divides $6\lambda_i(\lambda_i+1)$. We
will need a refinement:

\begin{lemma}
$p-1$ divides $3\lambda_i(\lambda_i+1)$.
\label{l107}
\end{lemma}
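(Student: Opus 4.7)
The plan is to redo the calculation that yields $(p-1) \mid 6\lambda_i(\lambda_i+1)$, but this time to keep track of parity so as to extract one extra factor of $2$.

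I would begin with the linear trace relation $(6.6^\#)$ for the symmetric matrix $A_i$. Combined with $m_i = \epsilon_i(p-1) - 2\lambda_i$ it gives at once
\[ \mu_i + \nu_i \;=\; -\epsilon_i - \lambda_i \;\in\; \mathbb{Z}. \]
Since $\mu_i$ and $\nu_i$ are real algebraic integers with rational sum, their product $\sigma_i := \mu_i \nu_i$ is a real rational algebraic integer, hence $\sigma_i \in \mathbb{Z}$; and so $\mu_i^2 + \nu_i^2 = (\epsilon_i+\lambda_i)^2 - 2\sigma_i$. Substituting these expressions into the quadratic trace relation $m_i^2 + (p+1)\lambda_i^2 + (p-1)(\mu_i^2 + \nu_i^2) = 3pm_i$ and collecting terms so as to isolate $6\lambda_i(\lambda_i+1)$ on one side, a short calculation should yield the identity
\[ 6\lambda_i(\lambda_i+1) \;=\; (p-1)\,Y_i, \qquad Y_i \;=\; p\,\epsilon_i(3-\epsilon_i) + 2(\epsilon_i-3)\lambda_i + 2\sigma_i - 2\lambda_i^2, \]
in which $Y_i$ is visibly a rational integer. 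This already recovers the crude bound $(p-1) \mid 6\lambda_i(\lambda_i+1)$.

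To promote it to $(p-1) \mid 3\lambda_i(\lambda_i+1)$, I would reduce $Y_i$ modulo $2$. The terms $2(\epsilon_i-3)\lambda_i$, $2\sigma_i$ and $-2\lambda_i^2$ are even on their faces; and since $p$ is odd, the remaining term reduces to $\epsilon_i(3-\epsilon_i) \equiv \epsilon_i(\epsilon_i+1) \equiv 0 \pmod{2}$, being a product of two consecutive integers. Hence $Y_i$ is even, and dividing through by $2$ yields $(p-1) \mid 3\lambda_i(\lambda_i+1)$ as required. The only point that is not sheer bookkeeping is the integrality of $\sigma_i$; no input from the cubic trace relations $(6.8)$ appears to be needed.
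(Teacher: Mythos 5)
Your argument is essentially the paper's own proof: both rest on the linear and quadratic trace relations together with the integrality of $\mu_i\nu_i$, and both extract the extra factor of $2$ from the evenness of $\epsilon_i(\epsilon_i\pm1)$ --- the paper computes modulo $2(p-1)$ directly, while you derive the exact identity $6\lambda_i(\lambda_i+1)=(p-1)Y_i$ (which I have checked) and then reduce $Y_i$ modulo $2$; these are the same computation. The one point to correct is your justification of $\sigma_i\in\mathbb{Z}$: it is \emph{not} true that two real algebraic integers with rational sum have rational product (take $\sqrt{2}$ and $1-\sqrt{2}$). The fact is nevertheless true here, because $\mu_i$ and $\nu_i$ are the eigenvalues of $A_i$ associated with the two degree-$(p-1)$ constituents of $\pi$, and by Lemma~6.12 the set $\{\mu_i,\nu_i\}$ is stable under the Galois group $\Gamma$ (equivalently, $(x-\mu_i)(x-\nu_i)$ is the rational factor of the characteristic polynomial of $A_i$ left over after removing $(x-m_i)(x-\lambda_i)^{p+1}$), so both elementary symmetric functions of $\mu_i,\nu_i$ are rational algebraic integers, hence rational integers.
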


\begin{proof}
In this case equations (6.6), (6.7) give
\begin{eqnarray*}
\mu_i+\nu_i &=& -\epsilon_i-\lambda_i\\
\hbox{and }(p-1)(\mu_i^2+\nu_i^2) &=& 3p(\epsilon_i(p-1)-2\lambda_i)
-(\epsilon_i(p-1)-2\lambda_i)^2-(p+1)\lambda_i^2.
\end{eqnarray*}
Now $\mu_i\nu_i$ is a rational integer, and
$2\mu_i\nu_i=(\mu_i+\nu_i)^2-(\lambda_i^2+\mu_i^2)$.
Calculating modulo $2(p-1)$ therefore gives
\[\left.\begin{array}{rcl}
0 &\equiv& (p-1)(\epsilon_i+\lambda_i)^2-3p(\epsilon_i(p-1)-2\lambda_i)^2
+(p+1)\lambda_i^2\\[1mm]
&\equiv& (p-1)(\epsilon_i+\lambda_i)^2-\epsilon_i(p-1)+6\lambda_i+4\lambda_i^2
+(p-1)\lambda_i^2+2\lambda_i^2\\[1mm]
&\equiv& (p-1)(\epsilon_i^2-\epsilon_i)+6\lambda_i+6\lambda_i^2
\end{array}\right\}\pmod{2(p-1)}.\]
Since $\epsilon_i^2-\epsilon_i$ is even we see that $2(p-1)$ divides
$6\lambda_i(\lambda_i+1)$ and therefore $p-1$ divides $3\lambda_i(\lambda_i+1)$
as claimed.
\end{proof}

The last paragraph of the proof of 10.6 was to show that
$|\lambda_i|<3\sqrt{p}/2$. Again we need a finer estimate:

\begin{lemma}
\[\epsilon_i(p-1)(6p-2\epsilon_ip+\epsilon_i)
-6\lambda_i(2p-\epsilon_ip+\epsilon_i)-(3p+9)\lambda_i^2\ge0.\]
\label{l108}
\end{lemma}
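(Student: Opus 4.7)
The plan is to exploit the reality of the two remaining eigenvalues $\mu_i$ and $\nu_i$: since $A_i$ is symmetric, $\mu_i$ and $\nu_i$ are real, so $(\mu_i-\nu_i)^2\ge0$, equivalently
\[2(\mu_i^2+\nu_i^2)\ \ge\ (\mu_i+\nu_i)^2.\]
This is the only genuine piece of input beyond the linear/quadratic trace relations; everything else is substitution.

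First I would read off the ingredients already assembled in the run-up to the lemma. From (6.9) we have $\mu_i+\nu_i=-\epsilon_i-\lambda_i$. From the quadratic trace relation (6.7), together with $m_i=\epsilon_i(p-1)-2\lambda_i$, we get
\[(p-1)(\mu_i^2+\nu_i^2)\ =\ 3pm_i-m_i^2-(p+1)\lambda_i^2,\]
so
\[\mu_i^2+\nu_i^2\ =\ \frac{3pm_i-m_i^2-(p+1)\lambda_i^2}{p-1}.\]

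Next I would plug these two expressions into the inequality $2(\mu_i^2+\nu_i^2)\ge(\mu_i+\nu_i)^2$, clear the denominator $p-1$, and expand. After substituting $m_i=\epsilon_i(p-1)-2\lambda_i$, the left hand side becomes
\[6p\epsilon_i(p-1)-12p\lambda_i-2\epsilon_i^2(p-1)^2+8\epsilon_i(p-1)\lambda_i-(2p+10)\lambda_i^2,\]
while the right hand side is $(p-1)(\epsilon_i+\lambda_i)^2$. Transposing the right hand side and collecting by monomials in $\epsilon_i,\lambda_i$, the $\epsilon_i^2$ coefficient becomes $-(p-1)(2p-1)$, the $\epsilon_i\lambda_i$ coefficient becomes $6(p-1)$, and the $\lambda_i^2$ coefficient becomes $-(3p+9)$. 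Regrouping the $\epsilon_i$ and $\lambda_i$ terms into the factored forms shown in the statement gives exactly the claimed inequality.

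This is routine algebra and there is no real obstacle; the only thing to watch is to keep the sign conventions straight when passing between $(\mu_i-\nu_i)^2\ge0$ and the expressions for $\mu_i+\nu_i$ and $\mu_i^2+\nu_i^2$. The payoff is that we have turned the trivial inequality $(\mu_i-\nu_i)^2\ge 0$ into a quadratic inequality in $\lambda_i$ (of the form already used in Lemma~\ref{l106} to get $|\lambda_i|<3\sqrt p/2$), but now sharpened by the parameter $\epsilon_i$, which is exactly what is needed for the finer case analysis in Theorems \ref{t102} and \ref{t103}.
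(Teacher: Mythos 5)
Your proposal is correct and is essentially identical to the paper's own proof: both start from $(\mu_i-\nu_i)^2\ge0$, substitute $\mu_i+\nu_i=-\epsilon_i-\lambda_i$ and the expression for $\mu_i^2+\nu_i^2$ obtained from the quadratic trace relation with $m_i=\epsilon_i(p-1)-2\lambda_i$, and rearrange; your expansion checks out against the stated inequality. The only blemish is a citation slip -- the relation $\mu_i+\nu_i=-\epsilon_i-\lambda_i$ comes from the linear trace relation (6.6), not from (6.9).
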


\begin{proof}
We look more closely at the discriminant of the quadratic equation whose roots
are $\mu_i,\nu_i$. Since these are real,
\[2(\mu_i^2+\nu_i^2)-(\mu_i+\nu+i^2)=(\mu_i-\nu_i)^2\ge0,\]
and so using equations (6.6), (6.7),
\[6p(\epsilon_i)p-1)-2\lambda_i)-2(\epsilon_i(p-1)-2\lambda_i)^2
-2(p_1)\lambda_i^2-(p-1)(\epsilon_i+\lambda_i)^2\ge0,\]
and this can be rearranged as given in the statement of the lemma.
\end{proof}

From Lemma~\ref{l106} we know that either one of the numbers $\epsilon_i$ is
$0$, or there are just three adjacency matrices and
$\epsilon_1=\epsilon_2=\epsilon_3=1$. We deal now with the former possibility;
notice that this must be the situation if we are dealing with case VIII.

\begin{lemma}
If $\epsilon_1=0$ then $p$ is $7$ or $19$, and the rank of our group is $4$.
If $p=7$ then $m_1=4$ and if $p=19$ then $m_1=6$.
\label{l109}
\end{lemma}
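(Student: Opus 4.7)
Setting $k := -\lambda_1$, the hypothesis $\epsilon_1 = 0$ combined with $m_1 = \epsilon_1(p-1) - 2\lambda_1 > 0$ forces $k \ge 1$ and $m_1 = 2k$. The first step is to bound $k$. Specialising the inequality of Lemma~\ref{l108} to $\epsilon_1 = 0$ yields $-12p\lambda_1 \ge (3p+9)\lambda_1^2$; dividing by the positive quantity $-\lambda_1 = k$ gives $(3p+9)k \le 12p$, hence $k \le 4p/(p+3) < 4$. Thus $k \in \{1,2,3\}$.

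Next, the linear and quadratic trace relations (6.6), (6.7) applied to $A_1$ with $\epsilon_1 = 0$, $\lambda_1 = -k$, $m_1 = 2k$ produce $\mu_1 + \nu_1 = k$ and an explicit rational expression for $\mu_1^2 + \nu_1^2$ in terms of $p$ and $k$, so $\mu_1, \nu_1$ are the two roots of an explicit quadratic depending on $p$ and $k$. For $k = 1$, this quadratic is $x^2 - x - 2 = 0$, so $\{\mu_1, \nu_1\} = \{2, -1\}$; but $|\mu_1| = 2 = m_1$ contradicts Lemma~\ref{l613}, ruling out $k = 1$. For $k = 2$, Lemma~\ref{l107} forces $(p-1) \mid 6$, and as $p \ge 5$ is prime the only possibility is $p = 7$, with $m_1 = 4$.

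For $k = 3$, Lemma~\ref{l107} gives $(p-1) \mid 18$, so $p \in \{7, 19\}$. At $p = 7$ one computes that the quadratic satisfied by $\mu_1, \nu_1$ is $x^2 - 3x + 3$, whose discriminant $-3$ is negative, so its roots are non-real; but $A_1$ is symmetric, so $\mu_1, \nu_1$ must be real, a contradiction. At $p = 19$ the quadratic becomes $x^2 - 3x + 1$, with real algebraic-integer roots $(3 \pm \sqrt{5})/2$, and $m_1 = 6$. Thus exactly the pairs $(p, m_1) = (7, 4)$ and $(19, 6)$ survive.

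It remains to prove that in either surviving case the rank is $4$, i.e.\ we are in case VII rather than case VIII. Here I would exploit the fact recorded in the text that any $A_i$ arising from the amalgamation of a non-self-paired pair has $\epsilon_i, m_i, \lambda_i$ all even. When $p = 19$, $\lambda_1 = -3$ is odd, so $A_1$ is necessarily basic and its suborbit is self-paired; combined with $\sum \epsilon_i = 3$ (Lemma~\ref{l106}) and Lemma~\ref{l107} applied to the remaining $\lambda_i$ in each possible admissible configuration ($t-1 \in \{3,4,5\}$), the parity and divisibility constraints rule out every case VIII admissible set, leaving only $t-1 = 3$ with all suborbits self-paired, hence case VII and $r = 4$. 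The case $p = 7$ is similar in spirit but requires more care since $\lambda_1 = -2$ is even and therefore $A_1$ \emph{could} in principle be an amalgamation; one then has to enumerate the possible admissible sets using the divisibility condition of Lemma~\ref{l107} and the bound of Lemma~\ref{l613} on the other subdegrees. I expect this final rank step --- the combinatorial elimination of every residual case VIII admissible set at the two specific primes --- to be the main obstacle; the preceding reduction to $(p, m_1) \in \{(7,4), (19,6)\}$ is essentially direct manipulation of (6.6)--(6.9) together with Lemmas \ref{l613}, \ref{l107} and \ref{l108}.
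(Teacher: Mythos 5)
Your reduction to $(p,m_1)\in\{(7,4),(19,6)\}$ is correct and follows essentially the paper's own route: Lemma~\ref{l108} with $\epsilon_1=0$ bounds $|\lambda_1|$ by $4p/(p+3)<4$, and Lemma~\ref{l107} then pins down $p$. Your two local variations are harmless --- you exclude $m_1=2$ by exhibiting the eigenvalue $2=m_1$ and invoking Lemma~\ref{l613} where the paper cites the impossibility of a suborbit of length $2$ in a primitive group, and you exclude $(p,\lambda_1)=(7,-3)$ by a negative discriminant where the paper re-invokes Lemma~\ref{l108}; these are the same computations in different clothing.

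The genuine gap is the assertion that the rank is $4$, which you leave as a sketch and yourself flag as the main obstacle. Two things are missing. First, you never show that $\epsilon_1$ is the \emph{only} vanishing $\epsilon_i$; without that, the admissible configurations are not confined to $(0,1,1,1)$ and $(0,1,2)$, and your proposed enumeration does not close (at $p=19$, for instance, nothing you say forbids two distinct basic self-paired suborbits of length $6$, each with $\epsilon=0$ and $\lambda=-3$). The paper's device is to set $A=\sum\{A_i\mid\epsilon_i=0\}$: this is again a symmetric adjacency matrix with $\epsilon(A)=0$ and $\lambda(A)=\sum\lambda_i$, so the analysis you have just completed applies verbatim to $A$ and forces $m(A)\in\{4,6\}$; hence there is exactly one summand. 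Second, once uniqueness is known the elimination of case VIII is immediate and needs none of the case-by-case divisibility work you propose: the remaining $\epsilon_i$ are positive and sum to $3$, so the only configurations are $(0,1,1,1)$ with $t-1=4$ and $(0,1,2)$ with $t-1=3$; every matrix arising from amalgamating a paired suborbit has \emph{even} $\epsilon_i$, so in case VIII the amalgamated matrices can only be $A_1$ (and possibly the one with $\epsilon=2$), whence $\lambda_1$ is even, forcing $p=7$, $m_1=4$, and then the underlying basic suborbit would have length $m_1/2=2$, which is impossible in a primitive group. Without these two steps your argument establishes the values of $p$ and $m_1$ but not the rank.
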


\begin{proof} If $\epsilon_1=0$ then $m_1=-2\lambda_1$ and $\lambda_1$ must
be negative. From Lemma~\ref{l108} we get
\[-12p\lambda_i-(3p+9)\lambda_i^2\ge0,\]
and so $4p+(p+3)\lambda_1\ge0$. Thus
$\lambda_1\ge(-4p)/(p+3)>-3$. 
\end{proof}

Consequently $\lambda_1$ is $-3$, $-2$ or $-1$, and $m_1$ is $6$, $4$ or $2$.
Since $G$ is primitive, $m_1$ cannot be $2$ (cf.~\cite{r20}, Theorem 18.7).
Hence $\lambda_1$ is $-3$ or $-2$. Now from Lemma~\ref{l107} $p-1$ divides $18$,
or $p-1$ divides $6$ respectively. In the former case $p$ is $19$ or $7$,
but if $p-7$ and $\lambda_1=-3$, returning to Lemma~\ref{l108} we see that the
necessary inequality is not satisfied. Therefore
\[\begin{array}{rccc}
&p=19,&\lambda_1=-3,&m_1=6\\[1mm]
\hbox{or }&p=7,&\lambda_1=-2,&m_1=4
\end{array}\]
are the only possibilities, and we are left with proving that the rank of our
group is $4$ and not $6$. To do this, put $A=\sum\{A_i\mid\epsilon_i=0\}$.
Then $A$ is a symmetric adjacency matrix of subdegree $m=\sum m_i$ and with
eigenvalue $\lambda=\sum\lambda_i$ of multiplicity $p+1$. The analysis which
we have already given applies as well to $A$ as to any of the matrices $A_i$.
Thus if $p=19$ then $\lambda=-3$, $m=6$; and if $p=7$ then $\lambda=-2$,
$m=4$. Consequently there can be no more than one summand, namely $A_1$,
going in to the making of $A$. So $\epsilon_i>0$ fof $i>1$. Since
$\sum\epsilon_i=3$ there can be at most $3$ further matrices $A_i$, and
reordering if necessary, the possiblities are $t=5$, 
$\epsilon_2=\epsilon_3=\epsilon_4=1$, or $t=3$ and $\epsilon_2=1$,
$\epsilon_3=2$. Remembering that, if $A_i$ arose from amalgamation of paired
colours, then $\epsilon_i$ is even, we see that in case VIII either $A_1$ or
both $A_1$ and $A_3$ must have arisen from amalgating paired colours. But if
$A_1$ comes from amalgamating paired colours then $\lambda_1$ is even. Only
$p=7$, $\lambda_1=-2$ satisfies this condition, but then the subdegree
corresponding to the original colour from which $A_1$ was obtained would be
$m_1/2=2$. This is impossible in a primitive group\footnote{[Footnote lost
from scan]} (\cite{r20}, 18.7), and this completes
the proof of Lemma~\ref{l109}. It is now quite easy to show that if $p=7$ then
the only possibilities for $\lambda_2,\lambda_3$ consistent with (6.9) and
with Lemma~\ref{l108} are $(1,0)$, $(0,1)$ or $(-1,2)$, and of these only
the last gives values of $\mu_1$, $\mu_2$, $\mu_3$, $\nu_1$, $\nu_2$,
$\nu_3$ which can satisfy (6.9). Thus for $p=7$ we get $n_1=4$, $n_2=8$,
$n_3=8$m and with suitable choice of notation,
\[\begin{array}{ll}
\mu_1=1+\sqrt{2} & \nu_1=1-\sqrt{2} \\[1mm]
\mu_2=-2\sqrt{2} & \nu_2=2\sqrt{2} \\[1mm]
\mu_3=-2+\sqrt{2} & \nu_3=-2-\sqrt{2}.
\end{array}\]

Similarly, if $p=19$ the possibilities for $\lambda_2$, $\lambda_3$
consistent with (6.9) and with Lemmas~\ref{l107}, \ref{l108} are $(3,-1)$,
$(2,0)$, $(0,2)$, $(-1,3)$. Only the last gives values of $\mu_1$, $\mu_2$,
$\mu_3$, $\nu_1$, $\nu_2$, $\nu_3$ which can satisfy (6.9), and so in this
case $n_1=6$, $n_2=20$, $n_3=30$, and with suitable choice of notation,
\[\begin{array}{ll}
\mu_1=\displaystyle{\frac{3+\sqrt{5}}{2}} & \nu_1=\displaystyle{\frac{3-\sqrt{5}}{2}}\\[3mm]
\mu_2=-2\sqrt{5} & \nu_2=2\sqrt{5}\\[1mm]
\mu_3=\displaystyle{\frac{-5+3\sqrt{5}}{2}} & \nu_3=\displaystyle{\frac{-5-3\sqrt{5}}{2}}.
\end{array}\]

Thus we have proved Theorem~\ref{t103}, and to complete the proof of
Theorem~\ref{t102} we need to deal with the case where the rank is $4$
and $\epsilon_1=\epsilon_2=\epsilon_3=1$. In this case the matrices $A_i$
are in fact just the basic adjacency matrices, $B_1$, $B_2$, $B_3$.

\begin{lemma}
If $\epsilon_1=\epsilon_2=\epsilon_3=1$ then $p=31$ and the subdegrees are
$1,20,32,40$.
\label{l1010}
\end{lemma}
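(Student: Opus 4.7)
The plan is to follow the same template as the proof of Theorem~\ref{t92}(iii). With $\epsilon_i = 1$ each basic adjacency matrix $B_i$ is symmetric with eigenvalues $n_i = (p-1) - 2\lambda_i$, $\lambda_i$ (an integer), $\mu_i$ and $\nu_i$; from (6.9) we have $\lambda_1 + \lambda_2 + \lambda_3 = -1$. Solving the linear and quadratic trace relations gives
\[\mu_i + \nu_i = -1 - \lambda_i, \qquad \mu_i \nu_i = \lambda_i^2 + 2\lambda_i + k_i - p,\]
where $k_i := 3\lambda_i(\lambda_i+1)/(p-1)$ is a non-negative integer by Lemma~\ref{l107}, and the discriminant
\[d_i := (4-k_i)p + 1 - 3k_i - 3\lambda_i\]
must be non-negative since $\mu_i$ and $\nu_i$ are real.

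The first step is to bound the options. Lemma~\ref{l108} specialised to $\epsilon_i = 1$ gives $3(p+3)\lambda_i^2 + 6(p+1)\lambda_i \le (p-1)(4p+1)$, so that $|\lambda_i|$ is at most roughly $2\sqrt{p/3}$; combined with $3\lambda_i(\lambda_i+1) = k_i(p-1)$ this shows that for all but a short list of small primes, $k_i \in \{0,1,2,3,4\}$. Each non-zero $k_i$ then produces an explicit quadratic expression for $p$ in terms of $\lambda_i$, and different non-zero $k_i$'s within the same triple must yield the same $p$.

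The second step uses the constraint $\sum \mu_i = -1$ from (6.9). Writing $\mu_i = \bigl(-(1+\lambda_i) + s_i\sqrt{d_i}\bigr)/2$ for some $s_i \in \{\pm 1\}$, this is equivalent to
\[s_1\sqrt{d_1} + s_2\sqrt{d_2} + s_3\sqrt{d_3} = 0.\]
Linear independence of square roots of distinct squarefree integers over $\mathbb{Q}$ forces the three $d_i$ to share a common squarefree kernel $m$, say $d_i = m c_i^2$, with $\sum s_i c_i = 0$.

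The final step is a case analysis: for each unordered triple $(k_1, k_2, k_3)$ in $\{0,1,2,3,4\}^3$, list the integer triples $(\lambda_1, \lambda_2, \lambda_3)$ with $\sum \lambda_i = -1$ and the prescribed $k_i$, determine $p$ from the non-zero $k_i$'s (requiring agreement and primality), then apply the common-squarefree-kernel condition to the $d_i$. The cubic trace relation (6.8), demanding that each structure constant $a_{ijk}$ be a non-negative integer, gives a further sieve. The main obstacle is the bookkeeping: many candidates agree on $p$ but fail the squarefree-kernel condition, e.g.\ $(\lambda_1, \lambda_2, \lambda_3) = (5, 0, -6)$ with $(k_1, k_2, k_3) = (3, 0, 3)$ produces $p = 31$ but $d_i = 8, 125, 41$ with incompatible kernels. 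After exhausting the list the only surviving configuration, up to ordering, is $(\lambda_1, \lambda_2, \lambda_3) = (5, -1, -5)$ with $(k_1, k_2, k_3) = (3, 0, 2)$, giving $d_i = 8, 128, 72 = 2 \cdot 2^2, 2 \cdot 8^2, 2 \cdot 6^2$ and the cancellation $2 + 6 - 8 = 0$; hence $p = 31$ and the subdegrees are $n_i = 20, 32, 40$.
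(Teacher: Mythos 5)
Your setup (the integers $k_i=3\lambda_i(\lambda_i+1)/(p-1)$ bounded by $4$ via Lemma~\ref{l108}, the square-root cancellation condition from $\sum\mu_i=-1$) is sound, and your terminal configuration $(\lambda_1,\lambda_2,\lambda_3)=(5,-1,-5)$ with $d_i=8,128,72$ is exactly the one the paper finds. But there is a genuine gap: the case analysis you describe is not finite. Consider the triple with one $\lambda_i=0$ (hence that $k_i=0$) and the other two equal to $\lambda$ and $-1-\lambda$: since $\lambda(\lambda+1)=(-1-\lambda)(-\lambda)$, those two automatically have the \emph{same} $k$, the ``agreement of $p$'' condition is vacuous, and $p=1+3\lambda(\lambda+1)/k$ runs over an infinite list of candidates as $\lambda$ varies (e.g.\ $k=3$ gives $p=\lambda^2+\lambda+1$). ``Exhausting the list'' is therefore impossible as stated. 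Your squarefree-kernel condition does eventually kill each such family, but only after a bespoke polynomial-identity computation for each $k\in\{1,2,3\}$ (e.g.\ for $k=3$ one must show $d_1d_2=(\lambda^2+\lambda+8)^2$ has no integer solutions), and none of this is in your write-up. The paper kills the whole family in one stroke at its step 10.16: when $\lambda_i=0$ the cubic relation (6.8) gives $3a_{111}=p-5$, forcing $p\equiv2\pmod 3$, while $p=1+3\lambda(\lambda+1)/k$ forces $p\equiv1\pmod 3$. You mention the cubic relation only as ``a further sieve'' on an already-finite list; in fact it (or an equivalent) is what makes the list finite in the first place.

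It is also worth noting how differently the paper organises the reduction: instead of enumerating all triples $(k_1,k_2,k_3)$, it first proves that one of the $k_i$ must vanish (via an approximation $\lambda_i=-\tfrac12\pm\sqrt{k_i(p-1)/3}+\eta_i$ with $|\eta_i|<\tfrac18$, which combined with $\sum\lambda_i=-1$ forces $p<15$ unless some $k_i=0$), then that the corresponding $\lambda_i$ is $-1$ rather than $0$ (the mod-$3$ argument above), whence $\lambda_2=-\lambda_3=\lambda$, $p-1\mid 6\lambda$ and $\lambda(\lambda+1)\le p-1$ give $\lambda\le5$ and $p\in\{7,19,31\}$ almost immediately. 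If you want to keep your enumeration strategy, you must at minimum (a) import the $3a_{111}=p-5$ argument to dispose of the $\lambda_i=0$ families, and (b) check that no other sub-case degenerates into an infinite family (the case $\lambda_i=\lambda_j$ with equal nonzero $k$'s needs a word too; the paper excludes it separately by a simultaneous-diagonalisation argument, its step 10.11).
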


The proof will be given as a series of numbered steps.

\begin{fact}
The numbers $\lambda_1,\lambda_2,\lambda_3$ are all different.
\end{fact}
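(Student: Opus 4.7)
The plan is to argue by contradiction: assume that two of the three numbers coincide, say $\lambda_1=\lambda_2$, and use the linear and quadratic trace relations together with Lemma~\ref{l104} to derive an impossible equation among the basic adjacency matrices. Since the roles of $\lambda_1,\lambda_2,\lambda_3$ are symmetric, this will suffice.

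First I would extract the consequences of $\lambda_1=\lambda_2$. Since $\epsilon_1=\epsilon_2=1$, the relation $n_i=(p-1)-2\lambda_i$ forces $n_1=n_2$, and then the linear trace relation~(6.6) gives $\mu_i+\nu_i=-1-\lambda_i$, so $\mu_1+\nu_1=\mu_2+\nu_2$. The quadratic relation~(6.7) with $n_1=n_2$ and $\lambda_1=\lambda_2$ likewise forces $\mu_1^2+\nu_1^2=\mu_2^2+\nu_2^2$, and combining the last two identities yields $\mu_1\nu_1=\mu_2\nu_2$. Thus $\{\mu_1,\nu_1\}=\{\mu_2,\nu_2\}$ as multisets. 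If one had $\mu_1=\mu_2$ and $\nu_1=\nu_2$ as well, then $U^{-1}B_1U$ and $U^{-1}B_2U$ would be identical diagonal matrices (with the block structure displayed on p.~\pageref{p19}), forcing $B_1=B_2$, which is absurd. Hence $\mu_1=\nu_2$ and $\nu_1=\mu_2$.

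Next I would form $A=B_1+B_2$. This is a symmetric matrix in the admissible set obtained by amalgamating the colours $c_1,c_2$, and its eigenvalues, read off block by block from $U^{-1}AU$, are $2n_1$ with multiplicity~$1$; $2\lambda_1$ with multiplicity~$p+1$; and $\mu_1+\mu_2=\mu_1+\nu_1=-1-\lambda_1$ on both the $\chi_2$- and $\chi_3$-blocks, hence of total multiplicity $2(p-1)$. I must check these three values are pairwise distinct. Lemma~\ref{l613} applied to $A$ (whose subdegree is $2n_1>1$) shows that $2n_1$ has algebraic multiplicity~$1$, ruling out coincidence with either of the other two; and $2\lambda_1=-1-\lambda_1$ would force the integer $\lambda_1$ to equal $-\tfrac13$, impossible.

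Thus $B_1+B_2$ is a symmetric matrix for $G$ whose three distinct eigenvalues have multiplicities $1$, $p+1$, $2(p-1)$. By Lemma~\ref{l104} the only such matrix is $W-I$, and therefore $B_1+B_2=W-I$. But $B_0+B_1+B_2+B_3=W$ with $B_0=I$, so this would force $B_3=0$, contradicting the fact that $B_3$ is the adjacency matrix of a non-empty orbit. This contradiction rules out $\lambda_1=\lambda_2$, and the same argument rules out the other two coincidences. The only delicate point is the verification that the three eigenvalues of $B_1+B_2$ are genuinely distinct, and that is settled cheaply by Lemma~\ref{l613} and the integrality of~$\lambda_1$; the rest of the argument is routine bookkeeping with~(6.6) and~(6.7).
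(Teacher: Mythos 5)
Your argument is correct and follows essentially the same route as the paper: from $\lambda_1=\lambda_2$ and $\epsilon_1=\epsilon_2$ you get $\{\mu_1,\nu_1\}=\{\mu_2,\nu_2\}$, rule out the ``equal'' matching by simultaneous diagonalisation, and then reduce to Lemma~\ref{l104}. The only (cosmetic) difference is that you apply Lemma~\ref{l104} to $B_1+B_2$ and conclude $B_3=0$, whereas the paper, via the argument of Lemma~\ref{l105} and the relations $\sum\mu_i=\sum\nu_i=-1$, deduces $\mu_3=\nu_3$ and applies Lemma~\ref{l104} to the complementary matrix $B_3=W-I-(B_1+B_2)$ directly.
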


For suppose not, $\lambda_1=\lambda_2$ say. Then, since $\epsilon_1=\epsilon_2$
we have $n_1=n_2$ and the quadratic equation derived from (6.6), (6.7) whose
roots are $\mu_1,\nu_1$ is the same as that whose roots are $\mu_2,\nu_2$.
Thus either $\mu_1=\mu_2$, $\nu_1=\nu_2$ or $\mu_1=\nu_2$, $\mu_2=\nu_1$.
Now since it is case VII which we are considering we know that $B_1$ and
$B_2$ are simultaneously diagonalisable. Therefore the argument used for
proving Lemma~\ref{l105} applies to complete the proof.

\begin{fact}
Put $a_i=3\lambda_i(\lambda_i+1)/(p-1)$ so that, by
Lemma~\ref{l107}, $a_i$ is an integer. Then if $\lambda_i\ge0$ we have
$a_i\le3$, and in any case $a_i\le4$.
\end{fact}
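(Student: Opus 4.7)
The plan is to feed $\epsilon_i=1$ into Lemma~\ref{l108} and re-package the resulting quadratic inequality as a linear one in $a_i$ and $\lambda_i$. Setting $\epsilon_i=1$ in Lemma~\ref{l108} yields
\[3(p+3)\lambda_i^2+6(p+1)\lambda_i\le(p-1)(4p+1).\]
The key algebraic step is the identity
\[3(p+3)\lambda^2+6(p+1)\lambda=3\lambda(\lambda+1)(p+3)+3(p-1)\lambda,\]
which, on dividing by $p-1$ and using $3\lambda_i(\lambda_i+1)=a_i(p-1)$, converts the inequality to
\[(p+3)a_i+3\lambda_i\le 4p+1.\]

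If $\lambda_i\ge 0$ then in particular $(p+3)a_i\le 4p+1<4(p+3)$, so, $a_i$ being an integer, $a_i\le 3$. For the general bound I would combine the displayed linear inequality with the crude estimate $|\lambda_i|<\tfrac{3}{2}\sqrt{p}$, which is read off from the quadratic trace relation (6.7) exactly as in the proof of Lemma~\ref{l106}: symmetry of $A_i$ and $\mu_i^2+\nu_i^2\ge 0$ give $(p+1)\lambda_i^2\le m_i(3p-m_i)\le 9p^2/4$. Then
\[(p+3)a_i\le 4p+1-3\lambda_i<4p+1+\tfrac{9}{2}\sqrt{p},\]
and this is less than $5(p+3)=5p+15$ precisely when $9\sqrt{p}<2p+28$; squaring reduces that to the trivially valid $4p^2+31p+784>0$. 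Hence $a_i<5$, and integrality gives $a_i\le 4$.

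The only step requiring any insight is spotting that
\[3(p+3)\lambda^2+6(p+1)\lambda\;-\;3\lambda(\lambda+1)(p+3)=3(p-1)\lambda\]
is a multiple of $p-1$; this is what allows Lemma~\ref{l108} to be linearised in terms of $a_i$ and makes both bounds effectively immediate. Once that identity is in view the rest of the argument is a short numerical check, and I do not anticipate any real obstacle.
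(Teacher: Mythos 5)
Your proof is correct and follows essentially the same route as the paper: substitute $\epsilon_i=1$ into Lemma~\ref{l108}, rewrite the quadratic in $\lambda_i$ as the linear inequality $(p+3)a_i\le 4p+1-3\lambda_i$ via the identity $3(p+3)\lambda^2+6(p+1)\lambda=(p+3)\cdot 3\lambda(\lambda+1)+3(p-1)\lambda$, and then bound. The only difference is cosmetic: where the paper disposes of the case $\lambda_i<0$ by splitting into $p\ge19$ (where $-3\lambda_i/(p+3)\le1$) and a finite check for $p<19$, your uniform estimate $4p+1+\tfrac{9}{2}\sqrt{p}<5(p+3)$ handles all $p$ at once, which is a slight tidying of the same argument.
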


\begin{proof}
Since $\epsilon_i=1$, Lemma~\ref{l108} gives
\[(p-1)(4p+1)-6\lambda_i(p+1)-(3p+9)\lambda_i^2\ge0.\]
Hence
\begin{eqnarray*}
(3p+9)(\lambda_i^2+\lambda_i) &\le& (p-1)(4p+1)-(3p-3)\lambda_i,\\
\hbox{and }a_i=\frac{3\lambda_i(\lambda_i+1)}{p-1} &\le& 
\frac{4p+1}{p+3}-\frac{3\lambda_i}{p+3}\\
&=& 4-\frac{11}{p+3}-\frac{3\lambda_i}{p+3}.
\end{eqnarray*}
If $\lambda_i\ge0$ then we have $a_i<4$ and so $a_i\le3$. If $\lambda_i<0$
and $p\ge19$ then, as in the proof of Lemma~\ref{l106}, we have
$-3\lambda_i/(p+3)\le1$, hence $a_i<5$ and so $a_i\le4$. Finally, if $p<19$
and $\lambda_i\le0$ it is a simple matter to check that $a_i\le 3$ must hold.
\end{proof}

\begin{fact}
If none of $a_1,a_2,a_3$ is zero, then $a_1,a_2,a_3$ are all different.
\end{fact}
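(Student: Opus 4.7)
The plan is to argue by contradiction: if two of the $a_i$ (both non-zero) were equal, then the definition of $a_i$ would force an equality between values of the quadratic $\lambda(\lambda+1)$, which together with the previous fact would pin down a linear relation on the $\lambda_i$ forcing the third to vanish.

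More precisely, suppose $a_i = a_j$ for some $i \ne j$, with $a_i$ and $a_j$ both non-zero. Unwinding the definition $a_i = 3\lambda_i(\lambda_i+1)/(p-1)$ gives
\[\lambda_i(\lambda_i+1) = \lambda_j(\lambda_j+1),\]
which factors as $(\lambda_i-\lambda_j)(\lambda_i+\lambda_j+1)=0$. By the preceding Fact the numbers $\lambda_1,\lambda_2,\lambda_3$ are pairwise distinct, so $\lambda_i-\lambda_j\ne0$ and hence $\lambda_i+\lambda_j=-1$. However equation (6.9) for the $A_\nu$ (applied to the $\chi_\lambda$ of degree $p+1$) gives $\lambda_1+\lambda_2+\lambda_3=-1$, so the remaining $\lambda_k$ (with $k\ne i,j$) must equal $0$. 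This forces $a_k=3\lambda_k(\lambda_k+1)/(p-1)=0$, contradicting the hypothesis that none of $a_1,a_2,a_3$ is zero.

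The only step that requires any care is the appeal to equation (6.9): one must be sure that the linear trace relation, when read off on the $p+1$-multiplicity constituent, really does yield $\lambda_1+\lambda_2+\lambda_3=-1$. This is immediate from the form of the matrices $\Theta_{i\lambda}$ on page~\pageref{p19}, since the $\lambda_i$ are the eigenvalues of the (scalar) blocks $\Theta_{i,1}$ corresponding to the constituent of degree $p+1$ with multiplicity one, so (6.9) specialises to exactly that identity. No other tools are needed, and in particular the cubic equations (6.8) and the inequality of Lemma~\ref{l108} are not used here; the argument is purely algebraic and very short.
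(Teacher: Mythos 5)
Your argument is correct and is essentially the paper's own proof: the paper observes that $\lambda_i$ and $\lambda_j$ are then distinct roots of the quadratic $3\lambda(\lambda+1)-a_i(p-1)=0$, so $\lambda_i+\lambda_j=-1$ by Vieta, and concludes via (6.9) that the third $\lambda$ and hence the third $a$ vanish, exactly as you do by factoring. No difference of substance.
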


Suppose, for example, that $a_1=a_2$. Then both $\lambda_1,\lambda_2$ are roots
of the equation
\[3\lambda(\lambda+1)-a_1(p-1)=0.\]
Since (10.11) $\lambda_1\ne\lambda_2$ it follows that $\lambda_1+\lambda_2=-1$.
But from (6.9) we know that $\lambda_1+\lambda_2+\lambda_3=-1$, and so
$\lambda_3=0$. Then $a_3=0$ and (10.13) is proved.

\begin{fact}
If $a>0$ and $\lambda$ is a root of the equation
\[x^2+x-a=0\]
then $\lambda=-\frac{1}{2}\pm\sqrt{a}+\eta$, where $|\eta|<1/(8\sqrt{a})$.
\end{fact}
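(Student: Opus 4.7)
The plan is to write the roots explicitly and estimate the difference from $-\tfrac12 \pm \sqrt{a}$ directly. By the quadratic formula the two roots of $x^2 + x - a = 0$ are
\[\lambda = -\tfrac{1}{2} \pm \tfrac{1}{2}\sqrt{1+4a},\]
so defining $\eta$ by $\lambda = -\tfrac{1}{2} \pm \sqrt{a} + \eta$ forces
\[\eta = \pm\tfrac{1}{2}\bigl(\sqrt{1+4a} - 2\sqrt{a}\bigr).\]
It thus suffices to bound $\sqrt{1+4a} - 2\sqrt{a}$.

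I would do this by the standard conjugate-rationalisation trick: multiplying top and bottom by $\sqrt{1+4a} + 2\sqrt{a}$ gives
\[\sqrt{1+4a} - 2\sqrt{a} \;=\; \frac{(1+4a) - 4a}{\sqrt{1+4a} + 2\sqrt{a}} \;=\; \frac{1}{\sqrt{1+4a} + 2\sqrt{a}}.\]
Since $a>0$ one has $\sqrt{1+4a} > 2\sqrt{a}$, so the denominator exceeds $4\sqrt{a}$, and therefore
\[|\eta| \;=\; \frac{1}{2}\cdot\frac{1}{\sqrt{1+4a} + 2\sqrt{a}} \;<\; \frac{1}{2}\cdot\frac{1}{4\sqrt{a}} \;=\; \frac{1}{8\sqrt{a}},\]
as required. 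There is no genuine obstacle here; the whole lemma is just the observation that $\sqrt{a}$ is a very good approximation to $\tfrac12\sqrt{1+4a}$ for large $a$, made quantitative via the conjugate identity. The lemma is presumably recorded now because the subsequent argument (continuing the analysis of case VII with $\epsilon_1=\epsilon_2=\epsilon_3=1$) needs to pin down the $\lambda_i$ to within a small error in terms of the integer parameters $a_i = 3\lambda_i(\lambda_i+1)/(p-1)$ of fact 10.12, after rescaling the variable.
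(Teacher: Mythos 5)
Your proof is correct and is essentially the paper's own argument: the paper completes the square to get $(\lambda+\tfrac12)^2=a+\tfrac14$ and checks that $\sqrt{a+\tfrac14}$ lies within $1/(8\sqrt{a})$ of $\sqrt{a}$, which is exactly your conjugate-rationalisation estimate after dividing by $2$ (since $\tfrac12\sqrt{1+4a}=\sqrt{a+\tfrac14}$). Your closing remark about why the lemma is recorded also matches its actual use in the proof of Lemma 10.10.
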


For $(\lambda+\frac{1}{2})^2=\lambda^2+\lambda+\frac{1}{4}=a+\frac{1}{4}$.

It is trivial to checl that the positive square root of $a+\frac{1}{4}$ lies
strictly between $\sqrt{a}$ and $\sqrt{a}+1/(8\sqrt{a})$, and that the
negative square root lies strictly between $-a-1/(8\sqrt{a})$ and $-\sqrt{a}$.

We shall now show that one of the numbers $a_i$ is zero by using the fact that
otherwise $p$ has three different representations in the form 
$1+3\lambda_i(\lambda_i+1)/a_i$ with three different (10.13) values of $a_i$,
where $a_i$ is $1$, $2$, $3$ or $4$ and $\lambda_1+\lambda_2+\lambda_3=-1$:

\begin{fact}
One of $a_1$, $a_2$, $a_3$ must be zero.
\end{fact}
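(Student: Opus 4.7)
The plan is to assume all three of $a_1, a_2, a_3$ are strictly positive and derive a contradiction by combining the approximation in Fact~10.14 with the relation $\lambda_1 + \lambda_2 + \lambda_3 = -1$ from~(6.9). Under this assumption, Facts~10.12 and~10.13 force $\{a_1, a_2, a_3\}$ to be one of the four three-element subsets of $\{1, 2, 3, 4\}$.

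First, I would apply~(10.14) to each $\lambda_i$, which is a root of $x^2 + x - a_i(p-1)/3 = 0$. Setting $c_i = a_i(p-1)/3$, this yields
\[\lambda_i = -\tfrac{1}{2} + \epsilon_i\sqrt{c_i} + \eta_i, \qquad \epsilon_i \in \{\pm 1\},\quad |\eta_i| < 1/(8\sqrt{c_i}).\]
Summing over $i$, using~(6.9), and factoring out $\sqrt{(p-1)/3}$ (since $\sqrt{c_i} = \sqrt{a_i}\sqrt{(p-1)/3}$), one obtains
\[\Big|\sum_i \epsilon_i \sqrt{a_i}\Big| \le \tfrac{1}{2}\sqrt{3/(p-1)} + \tfrac{3}{8(p-1)}\sum_i 1/\sqrt{a_i}.\]

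I would then enumerate: for each of the four admissible subsets $\{a_1, a_2, a_3\}$ and each of the eight sign patterns, compute the left-hand quantity. A routine check shows none of the $32$ values vanishes, and the minimum absolute value is $\sqrt{2} - 1 \approx 0.414$, attained at $\{1, 2, 4\}$ with pattern $(+, +, -)$ and its negation. Using $\sum 1/\sqrt{a_i} < 2.3$, one verifies that for every prime $p \ge 11$ the right-hand side of the displayed inequality is smaller than $0.37$, which contradicts the lower bound $\sqrt{2}-1$.

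It remains to dispose of $p = 5$ and $p = 7$, where the approximation is too weak. Here I would argue directly from the integrality of $\lambda_i(\lambda_i + 1) = a_i(p-1)/3$: for $p = 5$ this forces $3 \mid a_i$, so the only admissible value in $\{1,2,3,4\}$ is $a_i = 3$; for $p = 7$ the equation $\lambda(\lambda + 1) = 2a_i$ has integer solutions with $a_i \in \{1, 2, 3, 4\}$ only for $a_i \in \{1, 3\}$. In either case there are too few admissible values to accommodate three distinct $a_i$, completing the proof. The hard part will be the sign-pattern enumeration supplying the bound $\sqrt{2}-1$: there is no apparent shortcut, and the fact that the minimum is attained precisely at $\{1, 2, 4\}$ drives the threshold $p \ge 11$ beyond which the approximation argument works on its own.
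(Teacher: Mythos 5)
Your proof is correct and follows essentially the same route as the paper: both combine the approximation of 10.14 with $\lambda_1+\lambda_2+\lambda_3=-1$ to bound $\left|\pm\sqrt{a_1}\pm\sqrt{a_2}\pm\sqrt{a_3}\right|$ above by a quantity decreasing in $p$ and below by a constant (the paper uses $4/10$, you use the exact minimum $\sqrt{2}-1$) coming from the same enumeration over three-element subsets of $\{1,2,3,4\}$, and then dispose of the remaining small primes by direct arithmetic on $\lambda_i(\lambda_i+1)=a_i(p-1)/3$. The only difference is bookkeeping: you retain the sharper error term $|\eta_i|<\frac{1}{8}\sqrt{3/(a_i(p-1))}$ and so obtain the contradiction already for $p\ge 11$, whereas the paper uses the cruder $|\eta_i|<1/8$, concludes $p<15$, and checks the primes below $15$ by hand.
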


\begin{proof} Suppose not: then (10.13) $a_1$, $a_2$, $a_3$ are all
different. We know that $\lambda_i$ is a root of the equation
\[x^2+x-a_i(p-1)/3=0\]
and so, by 10.14, we have
\[\lambda_i=-\frac{1}{2}\pm\sqrt{\frac{a_i(p-1)}{3}}+\eta_i\]
where
\[|\eta_i|<\frac{1}{8}\sqrt{\frac{3}{a_i(p-1)}}<\frac{1}{8}.\]
Now by (6.9) we have
\[\lambda_1+\lambda_2+\lambda_3=-1,\]
and so
\[-\frac{3}{2}+\sqrt{\frac{p-1}{3}}\left(\pm\sqrt{a_1}\pm\sqrt{a_2}
\pm\sqrt{a_3}\right)+\eta_1+\eta_2+\eta_3=-1.\]
This gives
\[\left|\sqrt{\frac{p-1}{3}}\left(\pm\sqrt{a_1}\pm\sqrt{a_2}\pm\sqrt{a_3}\right)
\right|<\frac{7}{8}.\]
We know (by 10.13 and 10.12) that $a_1$, $a_2$, $a_3$ are three among the
numbers $1,2,3,4$, and crude approximations to $\sqrt{2}$ and $\sqrt{3}$
give the estimate
\[\left|\pm\sqrt{a_1}\pm\sqrt{a_2}\pm\sqrt{a_3}\right| > \frac{4}{10}.\]
So we have
\[\frac{4}{10}\sqrt{\frac{p-1}{3}} < \frac{7}{8}.\]
This yields $p<15$: but elementary arithmetic is enough to show that no
prime less than $15$ has three representations in the form
$1+3\lambda_i(\lambda_i+1)/a_i$ with $\lambda_i,a_i$ integral and
$1\le a_i\le 4$. Thus our supposition leads to a contradiction, and one of
$a_1$, $a_2$, $a_3$ must be zero.
\end{proof}

Choose the notation so that $a_1=0$. Then

\begin{fact}
$\lambda_1=-1$.
\end{fact}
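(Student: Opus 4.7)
The plan is to argue by contradiction. Since $a_1 = 3\lambda_1(\lambda_1+1)/(p-1) = 0$, we have $\lambda_1\in\{0,-1\}$, and the task reduces to excluding the possibility $\lambda_1=0$. Assume then that $\lambda_1=0$. Because $\epsilon_1=1$ this gives $n_1=p-1$, and solving the linear and quadratic trace relations (6.6), (6.7) for the elementary symmetric functions of $\mu_1,\nu_1$ yields $\mu_1+\nu_1=-1$ and $\mu_1\nu_1=-p$.

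The central step is to extract an integrality contradiction from the cubic trace relation (6.8) applied to the triple $(1,1,1)$. By Lemma~\ref{l105} all non-trivial suborbits in case~VII are self-paired, so $1^{*}=1$ and (6.8) becomes $\tr(B_1^3)=3p\cdot n_1\cdot a_{111}$. Since in case~VII the algebra $V$ is commutative, $B_1$ is simultaneously diagonalisable with the other basic adjacency matrices and its eigenvalues $n_1,\lambda_1,\mu_1,\nu_1$ appear with multiplicities $1,p+1,p-1,p-1$. Using
\[
\mu_1^3+\nu_1^3=(\mu_1+\nu_1)^3-3\mu_1\nu_1(\mu_1+\nu_1)=-1-3p,
\]
a short calculation gives $\tr(B_1^3)=p(p-1)(p-5)$, whence $a_{111}=(p-5)/3$. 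Since the multiplication constants are non-negative rational integers, this forces $p\equiv 2\pmod{3}$.

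To close the contradiction I would force $p\equiv 1\pmod{3}$ from the remaining eigenvalues. By Fact~10.11 the $\lambda_i$ are distinct, and (6.9) gives $\lambda_1+\lambda_2+\lambda_3=-1$; together with $\lambda_2,\lambda_3\ne 0$ (otherwise two $\lambda_i$ would coincide) this forces, up to swapping indices, $\lambda_2\ge 1$ and $\lambda_3=-1-\lambda_2\le -2$. Then $\lambda_3(\lambda_3+1)=\lambda_2(\lambda_2+1)$, so $a_2=a_3$; Fact~10.12 then bounds $a_2\in\{1,2,3\}$. A brief mod-$3$ inspection of $a_2(p-1)=3\lambda_2(\lambda_2+1)$ in each of these three cases (using that $p>3$ is prime, which excludes the only stray residue $p\equiv 0\pmod{3}$ that could arise when $a_2=3$) forces $p\equiv 1\pmod{3}$, contradicting the preceding paragraph. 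Hence $\lambda_1=-1$.

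The main obstacle is recognising that one must go beyond the linear and quadratic trace relations, which only pin down the symmetric functions of $\mu_1,\nu_1$, and instead draw on the integrality built into the cubic relation~(6.8); once $a_{111}=(p-5)/3$ is in hand, the two residues mod~$3$ collide essentially on inspection.
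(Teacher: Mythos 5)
Your proposal is correct and follows essentially the same route as the paper: both derive $p\equiv 1\pmod 3$ from $a_i(p-1)=3\lambda_i(\lambda_i+1)$ with $a_i\in\{1,2,3\}$ for a nonzero $a_i$, and then obtain the contradictory congruence $p\equiv 2\pmod 3$ from the cubic relation (6.8) via $3a_{111}=p-5$. Your treatment is marginally more careful in two spots the paper glosses over (explicitly excluding $p\equiv 0\pmod 3$ when $a_2=3$, and noting $a_2,a_3$ are both nonzero), but the argument is the same.
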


\begin{proof}
Certainly $\lambda_1$ is $0$ or $-1$. Assume therefore that $\lambda_1=0$. Not
both $a_2$, $a_3$ are zero, for otherwise $\lambda_1,\lambda_2,\lambda_3$ would
all be roots of the equation $x^2+x=0$ and they would not all be different as
we know (10.11) they must. Suppose therefore that $a_2\ne0$. Then
\[p=\frac{3\lambda_2(\lambda_2+1)}{a_2}+1.\]
If $3\nmid a_2$ then clearly $p\equiv1\pmod{3}$. If $3\mid a_2$ then by 10.12
$a_2=3$, $p=\lambda^2+\lambda+1$ and again we see that $p\equiv1\pmod{3}$.
Thus $p\equiv1\pmod{3}$ in all cases.

From equations (6.6), (6.7) we have
\begin{eqnarray*}
\mu_1+\nu_1 &=& -1,\\
\mu_1^2+\nu_1^2 &=& 3p=(p-1) = 2p+1,
\end{eqnarray*}
and (6.8) gives
\[3p(p-1)a_{111} = (p-1)^3+(p-1)(\mu_1^3+\nu_1^3).\]
Therefore
\begin{eqnarray*}
3pa_{111} &=& (p-1)^2 + \frac{3}{2}(\mu_1+\nu_1)(\mu_1^2+\nu_1^3)
-\frac{1}{2}(\mu_1+\nu_1)^3 \\
&=& (p-1)^2-\frac{3}{2}(2p+1)+\frac{1}{2}\\
&=& p^2-5p\\
3a_{111} &=& p-5.
\end{eqnarray*}
Since $a_{111}$ is an integer we see that $p\equiv5\pmod{3}$, which contradicts
the congruence we derived in the previous paragraph. Hence $\lambda_1\ne0$, so
$\lambda_1=-1$.

At last we are in a position to prove that $p=31$. Since $\lambda_1=-1$ and
$\lambda_1+\lambda_2+\lambda_3=-1$ we know that $\lambda_2=-\lambda_3=\lambda$, 
say. In this case
\begin{eqnarray*}
p-1 &\hbox{divides}&3\lambda(\lambda+1)\\
p-1 &\hbox{divides}&3(-\lambda)(-\lambda+1).
\end{eqnarray*}
Therefore $p-1$ divides $6\lambda$.

By interchanging $\lambda_2$, $\lambda_3$ if necessary we may assume that
$\lambda\ge0$. Then $\lambda\ne0$ since (10.11) $\lambda_2\ne\lambda_3$, and
$\lambda\ne1$ since $\lambda_3\ne\lambda_1$. Thus $\lambda\ge2$. Moreover, we
know from (10.12) that
\[\frac{6\lambda}{p-1}\cdot\frac{\lambda+1}{2}\le3.\]
Thus $\lambda+1\le6$ and if $t\lambda\ne p-1$ then $\lambda+1\le3$. It is now
easy to see that the only possibilities are
\[\begin{array}{rrr}
6\lambda=p-1, & \lambda=5, & p=31;\\[1mm]
\hbox{or} & \lambda=3, & p=19;\\[1mm]
\hbox{or\ \ } 3\lambda=p-1, & \lambda=2, & p=7.
\end{array}\]

In case $p=7$ we have $\lambda_2=2$ and $n_2=2$, but a primitive group of 
degree~$21$ cannot have a suborbit of length~$2$ (\cite[p51]{r20}). This case
therefore does not arise.

In case $p=19$ we use (6.6), (6.7) to calculate $\mu_1$, $\nu_1$, $\mu_2$,
$\nu_2$. $\mu_3$, $\nu_3$. The result is
\begin{eqnarray*}
\mu_1,\nu_1 &\hbox{are}& \pm2\sqrt{5} \\
\mu_2,\nu_2 &\hbox{are}& -2\pm\sqrt{6} \\
\mu_3,\nu_3 &\hbox{are}& 5, -3.
\end{eqnarray*}
Equation (6.9), $\mu_1+\mu_2+\mu_3=-1$ is now impossible to satisfy.

Finally, in case $p=31$, no such contradiction arises. With suitable
choices of roots one finds
\[\begin{array}{ccc}
\begin{array}{rcl}
\mu_1 &=& 4\sqrt{2} \\
\mu_2 &=& -3-\sqrt{2} \\
\mu_3 &=& 2-3\sqrt{2}
\end{array}
&\qquad&
\begin{array}{rcl}
\nu_1 &=& -4\sqrt{2} \\
\nu_2 &=& -3+\sqrt{2} \\
\nu_3 &=& 2+3\sqrt{2}.
\end{array}
\end{array}\]
And since $\lambda_1=-1$, $\lambda_2=5$, $\lambda_3=-5$, the subdegrees are
$n_1=32$, $n_2=20$, $n_3=40$. This is the third possibilitly in
Theorem~\ref{t102} and the proof is now complete.

\end{proof}

\begin{center}\Large\textbf{Part III}\\\textbf{Appendices}\end{center}

\section{A remark on imprimitive groups}

The analysis presented in \S\S4,5 does not apply to imprimitive groups in
general, but it is not hard to see that it is valid in case $p\ge5$ and $G$
is an insoluble group of degree~$3p$ which has no blocks of size $p$ 
in~$\Omega$. Such a group either is primitive, or, if imprimitive, then it has
$p$ blocks of size~$3$, and, by a well-known theorem of Burnside, the group
$G^*$ induced by $G$ on the set of blocks in $\Omega$ is $2$-fold transitive.
If the stabilizer of a block operates as $S_3$ on that block then $G$ is
cab=nonically embeddable in the monomial group (non-standard wreath product)
$S_3\wr G^*$; and if the stabilizer of a block operates as $A_3$ then $G$ is
canonically embeddable in $A_3\wr G^*$.

The irreducible character $\chi$ such that $1+\chi$ is the permutation
character of $G$ operating on the set of blocks in $\Omega$ is necessarily 
a constituent of our permutation character $\pi$ of degree~$3p$. Since $\chi$
has degree $p-1$, $G$ is of one of the Types III, IV, V, VII, VIII. It is not
hard to prove the facts listed below by methods similar (in most cases) to
those used in Part II of this paper. We shall use $G$ to denote an insoluble
group which is imprimitive of degree~$3p$, and which has no blocks of size~$p$.

\begin{fact}
If $G$ is of Type III then its subdegrees are $1,2,3(p-1)$.
\end{fact}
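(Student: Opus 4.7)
The plan is to exploit the block structure directly, without invoking the machinery of \S6. Since $G$ is imprimitive of degree $3p$ with no blocks of size $p$, it acts on a system of $p$ blocks of size $3$. Let $B$ be the block containing the point $0$, let $G_B$ denote its stabiliser, and note that $H = G_0 \le G_B$ with $|G_B : H| = 3$ since $G_B$ is transitive on $B$. The induced action of $G_B$ on $B$ is transitive on three letters, hence is either $A_3$ or $S_3$, as recorded at the start of the section.

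The first step is to rule out the possibility that $G_B$ induces $A_3$ on $B$. In that case, the stabiliser of a point in the regular action of $A_3$ on three letters is trivial, so $H$ would act trivially on $B$ and would fix all three of its points. This would give three suborbits of length $1$, and since the $3p - 3$ points outside $B$ must form at least one further $H$-orbit, the rank of $G$ would be at least $4$. But Type III has rank $3$, so this case is excluded.

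Consequently $G_B$ induces $S_3$ on $B$, and $H$ acts on $B \setminus \{0\}$ as the stabiliser of a letter in $S_3$ acts on the other two letters, that is, as a transposition. Hence $B \setminus \{0\}$ is a single $H$-orbit of length $2$, providing suborbits of lengths $1$ and $2$ inside $B$.

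Finally, since the rank is $3$, only one further suborbit is available, and it must have length $3p - 1 - 1 - 2 = 3(p-1)$. Thus the $3(p-1)$ points outside $B$ form a single $H$-orbit, and the subdegrees are $1, 2, 3(p-1)$ as claimed. The only step requiring any real argument is the exclusion of the $A_3$-case at the outset; the rest is forced by the block system together with the rank hypothesis.
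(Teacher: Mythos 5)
Your proof is correct, and it takes a genuinely different route from the one the paper indicates. The paper offers no explicit argument for the facts of \S11, saying only that they can be proved ``by methods similar (in most cases) to those used in Part II'', i.e.\ by the trace relations (6.6), (6.7), (6.9) applied to the eigenvalues of the adjacency matrices; for an imprimitive group Lemma 6.13 is unavailable, so that route must allow $\epsilon_i=0$, extract the solution $n_1=2$, $\lambda_1=-1$, and then still invoke the block system to exclude the competing solution $\epsilon_1=\epsilon_2\ne 0$ which leads back to $p=3a^2+3a+1$. You bypass all of this: you use only that Type III forces rank $3$ (since $r=\sum e_\lambda^2=3$, valid without primitivity) together with the standing hypothesis of \S11 that $G$ has $p$ blocks of size $3$, and the elementary dichotomy $G_B$ induces $A_3$ or $S_3$ on a block $B$. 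Your exclusion of the $A_3$ case (three fixed points of $H$ plus at least one outside orbit gives rank $\ge 4$) and the counting in the $S_3$ case (noting $K\le H$ where $K$ is the kernel of $G_B$ on $B$, so $H$ induces the transposition fixing $0$) are both sound. What your argument gives up is the by-product of the Part II method, namely the eigenvalues of the adjacency matrices as tabulated in \S14 -- though once the subdegrees are known these follow from (6.6), (6.7) anyway. What it buys is brevity and the observation that the conclusion depends only on the rank and the block structure, not on the particular degrees $2p$ and $p-1$ of the constituents.
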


\medskip

Examples exist without restriction on $p$. For instance, if $G^*$ is any
insoluble group of degree~$p$ then $S_3\wr G^*$, as a group of degree~$3p$,
has Type~III. $\SL(3,3)$ operating as a group of degree~$39$ is also of this
kind.

\begin{fact}
If $G$ is of Type IV then either
\begin{itemize}
\item[(i)] its subdegrees are $1,2,p-1,2(p-1)$; or
\item[(ii)] its subdegrees are $1,1,1,3(p-1)$.
\end{itemize}
\end{fact}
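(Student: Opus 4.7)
The plan is to split into two cases according to whether the setwise stabiliser $G_B$ of the block $B$ of size~$3$ containing the fixed point $\alpha$ induces $A_3$ or $S_3$ on~$B$, and to combine this with the $2$-transitivity of the induced action $G^*$ on the set of $p$ blocks. Since $G$ is transitive on $\Omega$ and $B$ is a block, $G_B$ is transitive on $B$, so the induced permutation group on $B$ is indeed either $A_3$ or $S_3$. The point stabiliser $H$ lies inside $G_B$, and by hypothesis the rank of $G$ is~$4$.

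In the $A_3$-case, the image of $H$ in $G_B/K_B\cong A_3$ is the stabiliser of $\alpha$ in a regular action of $A_3$ on $B$, hence trivial; so $H$ acts trivially on $B$ and contributes three suborbits, each of size~$1$. Rank~$4$ then forces the remaining $3(p-1)$ points to form a single $H$-orbit, giving the subdegrees $1,1,1,3(p-1)$ of case~(ii).

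In the $S_3$-case, the same argument with $S_2$ in place of the trivial group shows that $H$ induces $S_2$ on $B$, contributing suborbits of sizes~$1$ and~$2$. For the other $p-1$ blocks I would use the $2$-transitivity of $G^*$: the image of $H$ in $G^*$ is the stabiliser of $B$, hence transitive on the remaining $p-1$ blocks. Picking a further block $B'$ and setting $H_{B'}=H\cap G_{B'}$, one has $|H:H_{B'}|=p-1$, so for $\beta\in B'$ the $H$-orbit of $\beta$ has size $(p-1)\cdot|H_{B'}\beta|$; equivalently, $H$-orbits on $\Omega\setminus B$ correspond bijectively to $H_{B'}$-orbits on $B'$ via multiplication of the length by $(p-1)$. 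Rank~$4$ gives exactly two $H$-orbits outside $B$, so $H_{B'}$ has exactly two orbits on $B'$; since these partition the $3$ points of $B'$ into two positive parts the only possibility is $3=1+2$, yielding subdegrees $1,2,p-1,2(p-1)$ of case~(i).

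I do not anticipate a serious obstacle: once the two induced actions on $B$ are identified the argument is essentially bookkeeping, with $2$-transitivity on blocks doing the work of rigidifying the orbit structure away from $B$. The only mildly delicate point is the identity $|H\beta|=(p-1)\cdot|H_{B'}\beta|$, which rests on the fact that $H$ already permutes the $p-1$ blocks other than $B$ transitively, so that $|H:H_{B'}|=p-1$.
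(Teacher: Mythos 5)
Your overall strategy is sound and genuinely different from what the paper has in mind: the paper states this fact without proof, remarking only that it follows ``by methods similar to those used in Part II'', i.e.\ by the diophantine analysis of eigenvalues of adjacency matrices via the trace relations (6.6)--(6.9) (now without the strict inequality of Lemma 6.13, since $G$ is imprimitive). Your structural argument through the block system is more elementary, and it has the added benefit of identifying directly which induced action on a block goes with which case --- a point the paper records separately after the statement. The $A_3$ half is complete as written: three singleton suborbits inside $B$ together with rank $4$ force a single orbit on the remaining $3(p-1)$ points, with no further input needed.

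In the $S_3$ half, however, the step you yourself single out as delicate is mis-justified. You assert that the image of $H$ in $G^*$ is the full stabiliser of the block $B$ and hence transitive on the other $p-1$ blocks. Since $|G_B:H|=3$, that image has index $1$ or $3$ in the stabiliser of $B$ in $G^*$, and the index really can be $3$ --- for instance whenever the kernel of the action on the blocks is trivial, so that the image of $H$ is just $H$ itself sitting with index $3$ in $G_B$. An index-$3$ subgroup of a transitive group of degree $p-1$ need not be transitive, and if $H$ had two block-orbits of lengths $k_1,k_2$ your rank count would only yield subdegrees $1,2,3k_1,3k_2$ with $k_1+k_2=p-1$, which is not case (i) in general. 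The repair uses the Type IV hypothesis on the character rather than just on the rank: the permutation character of $G$ on the block system is $1+\chi$ with $\chi$ irreducible of degree $p-1$ (by Burnside's theorem $G^*$ is $2$-transitive), and $\chi$ occurs in $\pi$ with multiplicity exactly $1$ in Type IV, so by Frobenius reciprocity the number of $H$-orbits on the blocks is $1+\langle\chi|_H,1_H\rangle=1+\langle\chi,\pi\rangle=2$. Hence $H$ is transitive on the $p-1$ blocks other than $B$, the identity $|H:H_{B'}|=p-1$ holds, and the rest of your bookkeeping goes through.
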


In case (i) the stabilizer of a block operates on that block as the full
symmetric group $S_3$, and in case (ii) the stabilizer of a block operates
on it as $A_3$. Examples of case (i) occur whenever $p$ is a Fermat prime --
the smallest case being\footnote{This provides another, rather small, example
to settle the point raised in \cite{r20}, p.93, 11.3,4.}\label{p46}%
$S_5$ acting by
conjugation on its set of $15$ Sylow $2$-subgroups -- and I do not know
examples for other primes. Examples of case (ii) occur for all~$p$. For
example, if $G^*$ is any insoluble group of degree~$p$ then $A_3\wr G^*$ has
type IV and subdegrees $1,1,1,3(p-1)$.

\begin{fact}
If $G$ is of Type V then its subdegrees are $1,1,1,p-1,p-1,p-1$ and the
stabilizer of a block operates as $A_3$ on that block.
\end{fact}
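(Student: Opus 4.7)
The plan is to combine the character-theoretic constraints coming from the Type~V decomposition of $\pi$ with a short combinatorial orbit count on the block system of size $3$. From Table~\ref{t52} one has $\pi = 1 + 2\phi + \chi$ with $\deg\phi = p$, $\deg\chi = p-1$, and rank $r = 6$. Since the quotient $G^*$ acts $2$-transitively on the $p$ blocks, the permutation character on $\mathcal{B}$ satisfies $\pi_{\mathcal{B}} = 1 + \chi$ (because $\chi$ is the unique constituent of $\pi$ of degree $p-1$, and by transitivity of induction $\pi_\mathcal{B}$ is a sub-character of $\pi$). Frobenius reciprocity then yields
\[\langle \pi_{\mathcal{B}}|_H,\, 1_H\rangle \;=\; 1 + \langle \chi, \pi\rangle \;=\; 2,\]
so $H$ has exactly two orbits on $\mathcal{B}$, namely $\{B_0\}$ and the remaining $p-1$ outer blocks, on which $H$ is transitive.

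I would then split the six orbits of $H$ on $\Omega$ into those inside $B_0$ and those in $\Omega \setminus B_0$. For any outer block $B$, the map $\mathcal{O} \mapsto \mathcal{O} \cap B$ is a bijection between the $H$-orbits in $\Omega \setminus B_0$ and the $H_B$-orbits on $B$ (where $H_B$ is the setwise stabiliser of $B$), and there are at most $3$ such orbits since $|B| = 3$. Writing $K$ for the stabiliser of $B_0$ in $G$, if $K|_{B_0} \cong S_3$ then the point stabiliser $H$ acts on $B_0$ as a transposition, producing $2$ orbits of sizes $1$ and $2$; if $K|_{B_0} \cong A_3$, then $H$ has index $3 = |K|_{B_0}|$ in $K$, so $H$ acts trivially on $B_0$ and contributes $3$ singleton orbits.

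The equation $6 = (\text{orbits in } B_0) + (\text{orbits outside})$ immediately rules out $K|_{B_0} \cong S_3$, which would give at most $2 + 3 = 5$ orbits. Hence $K|_{B_0} \cong A_3$, producing $3$ fixed points in $B_0$, and the remaining $3$ orbits must all lie outside $B_0$. This forces each $H_B$-orbit on $B$ to be a singleton, so $H_B$ fixes $B$ pointwise; applying orbit-stabiliser to the transitive action of $H$ on the $p-1$ outer blocks, each outer $H$-orbit has length $|H:H_B| = p-1$. The subdegrees are therefore $1,1,1,p-1,p-1,p-1$, and every block stabiliser acts as $A_3$ on its block. No step poses a serious obstacle; the argument is a short combination of Frobenius reciprocity with elementary block-wise orbit counting, and the rank bound $r \le 5$ in the putative $S_3$ case is what delivers both conclusions simultaneously.
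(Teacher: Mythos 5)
Your argument is correct, and it is worth noting that the paper itself offers no written proof of this fact: \S11 merely states that the facts listed there can be proved ``by methods similar (in most cases) to those used in Part~II'', i.e.\ by setting up the linear, quadratic and cubic trace relations (6.6)--(6.9) for the eigenvalues of the adjacency matrices and solving the resulting diophantine system (now without the primitivity constraint of Lemma~\ref{l613}, so that $\epsilon_i=0$ is permitted). Your route is genuinely different and, for this particular fact, cleaner: the identity $\langle\pi_{\mathcal B}|_H,1_H\rangle=\langle\pi_{\mathcal B},\pi\rangle=2$ pins down the $H$-orbit structure on blocks, and the standard bijection between $H$-orbits in $\Omega\setminus B_0$ and $H_B$-orbits on an outer block $B$ turns the rank count $6=(\hbox{orbits in }B_0)+(\hbox{orbits outside})$ into the dichotomy $2+3$ versus $3+3$, forcing the $A_3$ alternative. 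All the small supporting claims check out: $\{B_0\}$ is an $H$-orbit because $H$ fixes $0$; $K|_{B_0}$ is transitive, hence $A_3$ or $S_3$; and when $H_B$ acts trivially on $B$ one has $H_x=H_B$ for $x\in B$, so each outer suborbit has length $|H:H_B|=p-1$. What your approach buys is that it delivers the statement about the block stabiliser acting as $A_3$ directly and simultaneously with the subdegrees, whereas the eigenvalue method would only produce the numerical solution $1,1,1,p-1,p-1,p-1$ (after ruling out other solutions of the diophantine system) and would still need a separate block-wise argument for the $A_3$ claim; the only thing the trace-relation method buys in exchange is uniformity with the rest of Part~II and the eigenvalue data itself.
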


Many examples exist. The groups $\SL(2,p-1)$, where $p$ is a Fermat prime,
operating as described in Lemma~\ref{l42}, are of this kind. Similarly, if
$q$ is a prime, $q\equiv1$~mod~$3$, and $p=(q^n-1)/(q-1)$ (this requires
$n\equiv\pm1$~mod~$6$). then $\PGL(n,q)$, $\PSL(n,q)$ can be faithfully 
represented as imprimitive groups of degree~$3p$ and Type V. By a theorem of
Ito~\cite{r11} these, the group $\SL(3,3)$ mentioned above, and $\PSL(2,7)$
are essentially the only groups $\PSL(n,q)$ of degree~$3p$.

\begin{fact}
There are no imprimitive groups of type VII.
\end{fact}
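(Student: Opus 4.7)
The plan is to exploit the block structure. Let $K$ denote the setwise stabiliser of a block $B$ in $G$, and $H$ the stabiliser of a point of $B$. Since $G^*$ is $2$-transitive on the $p$ blocks, $1_K^G = 1 + \chi^*$ with $\chi^*$ an irreducible constituent of $\pi$ of degree $p-1$; after relabelling I take $\chi^* = \chi_2$, so $\chi_1 + \chi_3 = (1_H^K - 1_K)^G$ is induced from the non-trivial part of the permutation character of $K$ on $B$. If $K|_B = A_3$, then $H$ coincides with the kernel of $K \to A_3$ (point stabilisers in $A_3$ are trivial), so $1_H^K - 1_K = \tilde\omega + \bar{\tilde\omega}$ for a non-trivial linear character $\tilde\omega$ inflated from $K/\ker \cong A_3$. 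Inducing gives $\tilde\omega^G + \bar{\tilde\omega}^G = \chi_1 + \chi_3$, with each of $\tilde\omega^G$ and $\bar{\tilde\omega}^G$ of degree $p$; but the constituents of each must come from the multiset $\{\chi_1, \chi_3\}$, and no subset of $\{p+1, p-1\}$ sums to $p$, ruling out the $A_3$ case.

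For the $S_3$ case I would argue as follows. Here $1_H^K = 1_K + \tau$ where $\tau$ is the inflation of the $2$-dimensional irreducible of $S_3$, so $\tau^G = \chi_1 + \chi_3$. Since $\tau$ has rational character values, $\tau^G$ is rational; as $\chi_1$ and $\chi_3$ have distinct degrees, each is rational individually, and consequently every eigenvalue of an adjacency matrix on the $\chi_1$- or $\chi_3$-component is a rational integer. The within-block suborbit $\{1,2\}$ is self-paired, and its adjacency matrix $B_1 = I_p \otimes (J_3 - I_3)$ acts as $+2I$ on the constant-on-block subspace $1 \oplus \chi_2$ and as $-I$ on the sum-zero-within-block subspace $\chi_1 \oplus \chi_3$, giving $\lambda_1 = -1$, $\mu_1 = 2$, $\nu_1 = -1$. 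For any external suborbit $\Delta_i$, the restriction of $B_i$ to the constant-on-block subspace commutes with the $2$-transitive $G^*$-action on $p$ blocks, so by Schur's lemma it lies in $\langle I, J_p - I_p \rangle$; because $\Delta_i$ is external it has zero diagonal, hence $B_i|_{\text{const}} = c_i(J_p - I_p)$ for some integer $c_i \ge 0$, giving $m_i = c_i(p-1)$ and $\mu_i = -c_i$. The rank-$4$ constraint $m_2 + m_3 = 3(p-1)$ with $1 \le c_i \le 2$ then forces $\{c_2, c_3\} = \{1,2\}$.

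After relabelling so that $c_2 = 1$ (hence $m_2 = p-1$, $\mu_2 = -1$), I would feed these values and $\lambda_1 = -1$ into the trace relations. The linear relation $(6.6)$ for $B_2$ combined with $(6.9)$ gives $(p+1)\lambda_2 + (p-1)\nu_2 = 0$, and eliminating $\nu_2$ in the quadratic relation $(6.7)$ yields $\lambda_2^2 = (p-1)^2/(p+1)$. Since $\lambda_2 \in \mathbb{Z}$ and $(p-1)^2 \equiv 4 \pmod{p+1}$, this forces $(p+1) \mid 4$, hence $p \le 3$, contradicting $p \ge 5$. The main point of difficulty is recognising that $2$-transitivity of $G^*$ alone, through Schur's lemma on the constant-on-block subspace, pins down the structure $B_i|_{\text{const}} = c_i(J_p - I_p)$ without any extra hypothesis on how $H$ acts on external blocks; once this observation is in place, the final contradiction is a single quadratic computation of the kind used throughout \S\S7--10.
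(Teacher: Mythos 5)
Your proposal is correct, but there is nothing in the paper to compare it with directly: the fact that Type VII does not occur for imprimitive groups is stated in \S11 without proof, the author saying only that the facts of that section can be established ``by methods similar (in most cases) to those used in Part II''. Your argument is a complete and legitimate instantiation of that programme. The $A_3$ case is disposed of purely character-theoretically: $(1_H^K-1_K)^G=\tilde\omega^G+\bar{\tilde\omega}^G=\chi_1+\chi_3$ forces $\tilde\omega^G$, of degree $p$, to be a sub-sum of $\{\chi_1,\chi_3\}$ with degrees $p+1$, $p-1$, which is impossible; this is clean and needs no trace relations. In the $S_3$ case your key structural input -- that the block-constant subspace is the $1\oplus\chi_2$ isotypic part (using $\chi_2\ne\chi_3$, which is exactly what Type VII asserts), hence is preserved by every $B_i$, and that Schur's lemma applied to the $2$-transitive $G^*$ pins down $B_i|_{\mathrm{const}}=c_i(J_p-I_p)$ with $c_i$ a non-negative integer and $n_i=c_i(p-1)$ -- is sound, and substitutes effectively for Lemma 6.13, which is unavailable here since $G$ is imprimitive. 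The concluding computation checks: with $n_2=p-1$, $\mu_2=-1$ one gets $(p+1)\lambda_2^2=(p-1)^2$, and since $\lambda_2$ is an algebraic integer with $\lambda_2^2$ rational this gives $(p+1)\mid(p-1)^2\equiv4\pmod{p+1}$, so $p\le3$. Three small tidying remarks: you should note explicitly that $\Delta_2$ is self-paired (immediate, since $n_2=p-1$ differs from all other subdegrees) before using (6.7) in the symmetric form; the relation $(p+1)\lambda_2+(p-1)\nu_2=0$ follows from (6.6) alone, without (6.9); and the rationality discussion of $\chi_1,\chi_3$ via $\tau^G$ is dispensable, since $\chi_1$ is rational simply as the unique constituent of degree $p+1$, and in any case only the rationality of $\lambda_2^2$ is needed at the end.
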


Thus Theorem~\ref{t102} is exhaustive for groups of this kind.

\begin{fact}
If $G$ is of type VIII then $p=7$, the subdegrees are
$1,2,2,4,4,8$, and $G$ is $\PSL(2,7)$ operating by conjugation on its set of
Sylow $2$-subgroups. 
\end{fact}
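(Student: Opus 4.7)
The plan is to combine the combinatorial machinery of Section~6 with the rigidity of the block relation, paralleling the treatment of case VIII in Section~10. Since $G$ is imprimitive with $p$ blocks of size $3$, the ``other points in the same block'' relation yields a self-paired suborbit of size $2$ whose adjacency matrix $B$ is block-diagonal with each $3\times 3$ block equal to $J_3-I_3$; hence $B$ has eigenvalue $2$ with multiplicity $p$ and eigenvalue $-1$ with multiplicity $2p$. Matching this spectrum against the isotypic decomposition $U^{-1}BU$ associated with $\pi=1+\chi_1+2\chi_2$ (degrees $1,p+1,p-1$) forces $\Theta_{B,1}=-1$ (a scalar) and the $2\times 2$ matrix $\Theta_{B,2}$ to have eigenvalues $\{2,-1\}$; in particular the linear and quadratic relations (6.6), (6.7) for $B$ are automatically satisfied.

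Next I would use Frobenius reciprocity via $\pi=\mathrm{ind}_K^G(\mathrm{ind}_H^K 1_H)$, where $K$ is the block stabilizer, to rule out the possibility that $K$ acts on its block as $A_3$. By Burnside's theorem $G^\ast$ is $2$-transitive on blocks, so the block permutation character is $1+\chi_{\mathrm{block}}$ with $\chi_{\mathrm{block}}$ irreducible of degree $p-1$; since $\chi_{\mathrm{block}}$ must be a constituent of $\pi$, we get $\chi_{\mathrm{block}}=\chi_2$. In the $A_3$ case $\mathrm{ind}_H^K 1_H=1+\omega+\bar\omega$, and the sum $\mathrm{ind}_K^G\omega+\mathrm{ind}_K^G\bar\omega$ (degree $2p$) must equal $\chi_1+\chi_2$; but each of these induced characters has degree~$p$ with constituents drawn from $\{\chi_1,\chi_2\}$ each with multiplicity $0$ or $1$, and there is no way to form a degree-$p$ subcharacter from $\chi_1$ (degree $p+1$) and $\chi_2$ (degree $p-1$). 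Hence $K$ acts as $S_3$ and $\mathrm{ind}_K^G\psi_2=\chi_1+\chi_2$, where $\psi_2$ is the $2$-dimensional irreducible of $S_3$ inflated to $K$.

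Then I would apply the trace relations (6.6)--(6.9) to the $r-2=4$ remaining non-trivial suborbits, proceeding along the lines of Lemmas~\ref{l106}--\ref{l108}. The crucial observation is that Lemma~\ref{l613} still applies to every adjacency matrix $A$ other than $B$: since $G^\ast$ is $2$-transitive on blocks and any non-block suborbit must have edges between some pair of distinct blocks, flag-transitivity forces edges between every ordered pair of distinct blocks, and hence the graph $\Delta\circ\Delta^\ast$ appearing in the proof of Lemma~\ref{l613} is connected. Running through the analogues of $\sum\epsilon_i=3$, the divisibility $(p-1)\mid 3\lambda_i(\lambda_i+1)$, the discriminant inequality from $(\mu_i-\nu_i)^2\ge 0$, and the integrality conditions from the cubic relations (6.8) forces $p=7$ and pins down the subdegrees as $1,2,2,4,4,8$.

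Finally, the identification $G=\PSL(2,7)$ follows from standard classification: $G^\ast$ is an insoluble $2$-transitive group of degree $7$, hence lies in $\{\PSL(2,7),A_7,S_7\}$, and $G\le S_3\wr G^\ast$; matching the stabilizer order $|H|=|G|/21$ against the requirement that the $H$-action on the $20$ points outside a block realise the subdegrees $2,4,4,8$ rules out $A_7$ and $S_7$ and singles out $\PSL(2,7)$ acting by conjugation on its set of Sylow $2$-subgroups (where $H$ is a Sylow $2$-subgroup $\cong D_8$). The main obstacle is the third paragraph: the non-commutativity of $V$ (since $\chi_2$ has multiplicity~$2$) forces one to work with $2\times 2$ matrices $\Theta_{i,2}$ rather than scalars, and balancing the four remaining admissible adjacency matrices against the fixed spectrum of $B$ requires careful bookkeeping to produce the Diophantine obstruction that rules out all primes $p\ne 7$.
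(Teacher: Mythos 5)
First, a caveat: the paper offers no written proof of this statement --- the Facts of \S11 are asserted with only the remark that they follow ``by methods similar (in most cases) to those used in Part II'' --- so your proposal has to be judged on its own merits rather than against an explicit argument. Your first two paragraphs are correct and are genuinely worthwhile content: the identification of the spectrum of the in-block matrix $B$ (eigenvalue $2$ with multiplicity $p$, eigenvalue $-1$ with multiplicity $2p$, forcing $\Theta_{B,1}=-1$ and $\Theta_{B,2}$ to have eigenvalues $2,-1$), and especially the Frobenius-reciprocity argument eliminating the $A_3$ block action: $\mathrm{ind}_K^G\omega$ would be a degree-$p$ subcharacter of $\pi$ containing no principal constituent, which is impossible when the remaining constituents have degrees $p+1$ and $p-1$. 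That is clean and correct.

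The gap is in the third paragraph, which is where the entire content of the statement lives. The claim that Lemma~\ref{l613} ``still applies to every adjacency matrix other than $B$'' is false, and the justification offered (edges between every ordered pair of distinct blocks implies $\Delta\circ\Delta^*$ connected) is a non sequitur. In the very group the Fact describes, the point stabiliser $D_8$ lies in two maximal subgroups isomorphic to $S_4$, one from each conjugacy class, so $\PSL(2,7)$ of degree $21$ carries \emph{two} systems of seven blocks of size $3$; the second suborbit of length $2$ is precisely the in-block relation of the second system. For that suborbit one has $\Delta\circ\Delta^*=\Delta$, which is disconnected (its components are the blocks of the second system) even though $\Delta$ has edges between every ordered pair of blocks of the first system; the corresponding matrix has its subdegree $2$ as an eigenvalue of multiplicity $p$, so conclusion (i) of Lemma~\ref{l613} fails, as does the strict bound $|\theta|<m$ for the eigenvalues associated with $\chi_2$. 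Any ``careful bookkeeping'' resting on those strict inequalities would therefore exclude $\PSL(2,7)$ itself. Beyond this, the Diophantine analysis that is supposed to force $p=7$ and the subdegrees $1,2,2,4,4,8$ is not carried out but merely asserted, and --- as you yourself note --- the non-commutativity of $V$ means the analogues of Lemmas~\ref{l106}--\ref{l108} do not transfer verbatim. Since that analysis is exactly the part of the claim requiring proof, the proposal as it stands does not establish the statement.
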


This is the group appearing in Lemma~\ref{l43} for $p=7$; it is the imprimitive
normal subgroup of index~$2$ in the group of Example~3 (p.~\pageref{e:ex3}).

\section{The index of $P$ in $N(P)$}

N. Ito, in \cite{r13}, has achieved a complete classification of transitive
simple permutation groups of degree $3p$ under the assumption that $N(P)$,
the normaliser of a Sylow $p$-subgroup $P$, has order $2p$. This raises the
question whether one can find a good bound for $N(P)$ in general. Of course,
there will be no significant such bound for $2$-fold transitive groups, but it
is easy to see that for primitive groups which are not $2$-fold transitive, 
the results of Part II imply that $|N(P):P|$ is always bounded by a function
whose order of magnitude\footnote{A similar deduction can be made from
Wielandt's results for simply transitive, primitive groups of degree~$2p$.}
is $4\sqrt{(3p)}$. Indeed, a little more precisely:

\begin{fact}
If $G$ is primitive of degree~$3p$ and Type III or IV,
then $|N(P):P|=q$ where $q$ divides one of $12$, $18$, $8a$, $12a$, 
$8(a+1)$ or $12(a+1)$.
\end{fact}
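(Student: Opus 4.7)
The plan is to bound $q$ by combining the elementary divisibility $q\mid p-1$ with the sharper constraint coming from the explicit subdegrees calculated in Theorems~\ref{t91} and~\ref{t92}.

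By Lemma~\ref{l31}, $P$ is cyclic of order $p$, and since $P$ has no fixed points on $\Omega$ it acts with exactly three orbits $O_1,O_2,O_3$ of length $p$. First I would establish $C_G(P)=P$: any $x\in C_G(P)$ of order coprime to $p$ that stabilises each $O_m$ setwise must act on each $O_m$ as an element of the centraliser of the regular $P$-action, hence as an element of $P$, and combined with faithfulness of $G$ on $\Omega$ this forces $x=1$; the case in which $x$ permutes the $O_m$ nontrivially is handled by passing to the subgroup fixing all three $O_m$ and invoking Lemma~\ref{l32}. Hence $N(P)/P\hookrightarrow\operatorname{Aut}(P)\cong\mathbb{Z}/(p-1)\mathbb{Z}$, so $q\mid p-1=3a(a+1)$, which gives the crude bound.

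To sharpen, I exploit the action of $N(P)$ on $\{O_1,O_2,O_3\}$. Write $k\in\{1,2,3\}$ for the length of the $N(P)$-orbit through $O_1$, so that $|H\cap N(P)|=q/k$; this subgroup fixes $\alpha\in O_1$, normalises $P$, acts on each suborbit $\Delta_j$, and on each intersection $\Delta_j\cap O_m$ either fixes $O_m$ setwise or pairs $\Delta_j\cap O_m$ with $\Delta_j\cap O_{m'}$. Set $d_{jm}=|\Delta_j\cap O_m|$. The orbit lengths of $H\cap N(P)$ (or of its $O_m$-stabiliser) on $\Delta_j\cap O_m$ divide $q/k$, and together with the column totals $d_{j1}+d_{j2}+d_{j3}=n_j$ and the row totals $\sum_j d_{jm}=p$ (respectively $p-1$ for $m=1$), the explicit subdegree values from Theorems~\ref{t91} and~\ref{t92} force linear relations among the $d_{jm}$ that tie $q/k$ to $2a+1$, $a$, or $a+1$. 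A short but careful translation of these relations yields, in each subcase, that $q$ divides one of $12$, $18$, $8a$, $12a$, $8(a+1)$, or $12(a+1)$, the particular bound depending on the Type, its subcase, and the value of $k$.

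The hard part is the case bookkeeping: five (Type, case) combinations from Theorems~\ref{t91} and~\ref{t92}, each further split by $k\in\{1,2,3\}$ and, in Type IV, by whether $\Delta_2,\Delta_3$ are paired or self-paired. One must check in each subcase that the intersection-number divisibilities really do force $q$ into one of the six bound classes, and that the sporadic bounds $12$ and $18$ absorb the small-$a$ cases where the combinatorial constraints become degenerate. The factor of $4$ appearing in $8a$ or $8(a+1)$ (in place of the naive $3a(a+1)$ from $q\mid p-1$) should come from the extra $2$-symmetry when $k=2$ and $N(P)$ fuses $O_2$ with $O_3$; tracking this factor uniformly through all subcases is the principal labour.
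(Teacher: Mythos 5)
You have correctly identified the two ingredients the paper combines (here the paper only says the argument is ``similar to the proof given below'', i.e.\ to Theorem~12.2): the divisibility $q\mid p-1=3a(a+1)$ coming from $C(P)=P$ and $N(P)/P\hookrightarrow\mathrm{Aut}(P)$, and a second constraint extracted from the action of $Q=H\cap N(P)$ on the suborbits, closed off by the Euclidean algorithm applied to the explicit subdegree polynomials of Theorems~\ref{t91} and~\ref{t92}. But your second constraint, as stated, is vacuous. ``The orbit lengths of $H\cap N(P)$ on $\Delta_j\cap O_m$ divide $q/k$'' is true of any action of a group of order $q/k$, and the row and column totals of the intersection numbers $d_{jm}$ carry no information modulo $q$ on their own; nothing in your plan actually ties $q$ to $a$ and $a+1$. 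The missing idea is \emph{semiregularity}: $Q$ embeds in $\mathrm{Aut}(P)\cong\mathbb{Z}_{p-1}$, which acts \emph{regularly} on the $p-1$ non-identity elements of $P$, so on each $P$-orbit $Q$ has at most one fixed point and all its remaining orbits have full length; globally $Q$ moves all but a bounded number ($2$ or $3$) of the points of $\Omega\setminus\{\alpha\}$ in orbits of length $q$ (or $q/k$). Only then does each nontrivial subdegree satisfy $n_j\equiv 0,1,2$ (or $3$) $\pmod q$, which is the congruence that, combined with $q\mid 3a(a+1)$ and the formulas $n_j=a(3a+1)$, $(a+1)(3a+2)$, etc., yields divisors of the shape $2a$, $2(a+1)$, $12$, $18$ by Euclid. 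Once you have this, the intersection numbers $d_{jm}$ are unnecessary: the paper works with the full subdegrees directly.

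Two further soft spots. First, your proof of $C_G(P)=P$ does not close the case of an element permuting the $P$-orbits nontrivially: an involution centralising $P$ that swaps $O_2$ with $O_3$ and fixes $O_1$ pointwise is not an abelian \emph{normal} subgroup of $G$, so Lemma~\ref{l32} does not apply; the paper excludes such elements by observing they are odd permutations (after first reducing to $G\cap A_{3p}$), and excludes a $3p$-cycle in $C(P)$ by Schur's theorem. Second, for Types III and IV one has $p\equiv1\pmod 3$, so unlike the Type II case $N(P)$ may be transitive on $\Omega$; this extra configuration (your $k=3$) must be carried through and is one source of the factors $8$ and $12$ in the statement. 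The case bookkeeping you defer is genuinely where the listed bounds come from, but it cannot be started until the semiregularity step is in place.
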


The proof is similar to the proof given below.

Curiously, for groups of Type II one gets far better information -- and in
view of Ito's work there is good hope that this is significant:

\begin{theorem}
If $G$ is of Type II and contains no odd permutations then $|N(P):P|$
divides~$8$. If $G$ does contain odd permutations then $|N(P):P|$ divides~$16$.
\end{theorem}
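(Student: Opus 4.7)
The plan is to bound $L := N_G(P)/P$ through a sequence of structural reductions followed by an arithmetic calculation driven by the subdegrees from Theorem~\ref{t61}.

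First, I would establish $C_G(P) = P$. By Lemma~\ref{l31}, $P$ is cyclic of order $p$; since $p \nmid |H|$, $P$ acts semiregularly on $\Omega$ with exactly three orbits $\Omega_0, \Omega_1, \Omega_2$ of length $p$. Any $y \in C_G(P)$ restricts on each $\Omega_i$ to a permutation centralising the regular $P$-action there, so $y|_{\Omega_i}$ lies in $P$; because the Sylow $p$-subgroup of $G$ has order only $p$, the triple $(y|_{\Omega_0}, y|_{\Omega_1}, y|_{\Omega_2}) \in P^3$ is forced to be diagonal, giving $y \in P$. Consequently $L$ embeds into $\mathrm{Aut}(P) \cong \mathbb{F}_p^*$ and is cyclic of order $d$ dividing $p-1$.

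Second, I would determine the action of $L$ on $\Omega$. In both parts of Theorem~\ref{t61}, $p \equiv 2 \pmod 3$, so $3 \nmid p-1$, whence $3 \nmid d$ and $L$ has no element of order~$3$. Hence $L$ fixes each $P$-orbit setwise, and because the two non-trivial suborbits have distinct lengths, $L$ also fixes each suborbit setwise. A generator $\tau$ of $L$ lifts to $N_G(P)$ and acts on each $\Omega_i$ by an affine map $x \mapsto kx + a_i$ with $k \in \mathbb{F}_p^*$ of order $d$; this has exactly one fixed point on $\Omega_i$ and otherwise cycles of length $d$. Choosing the base point $\alpha_0$ to be one of $\tau$'s three fixed points places $\tau$ in $H$, so $\tau$ preserves each suborbit $\Delta_i$.

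Third, the key arithmetic step: writing $f_i := |\Delta_i \cap \mathrm{Fix}(\tau)|$, we have $f_0 = 1$ and $f_1 + f_2 = 2$, and the cycle structure of $\tau$ on $\Delta_i$ gives the congruence $n_i \equiv f_i \pmod d$. Substituting the subdegrees of Theorem~\ref{t61} together with $p \equiv 1 \pmod d$, one finds that $d$ divides one of $4a$, $4a+1$, $4a+2$ in case~(i), and one of $4(a+1)$, $4a+3$, $2(2a+1)$ in case~(ii). Direct Euclidean computations with $p-1$ show that in case~(i) $\gcd(p-1, 4a+1) = 1$, $\gcd(p-1, 4a+2) = 2$, and (using that $a$ is even, by \S12) $\gcd(p-1, 4a) \mid 8$; the case~(ii) bounds are parallel.

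Finally, the distinction between $8$ and $16$ arises from a parity consideration: what the third step really bounds is $|L \cap A_{3p}|$, because the fixed-point congruences applied to $\tau$ see only the part of $L$ whose lifts are even permutations of $\Omega$. If $G \le A_{3p}$, then $L \subseteq A_{3p}$ and directly $|L| \mid 8$; otherwise $L/(L \cap A_{3p})$ has order at most~$2$, giving the weaker bound $|L| \mid 16$. The main obstacle will be the third step --- producing the Euclidean estimates cleanly and invoking the \S12 parity of $a$ to guarantee that the gcd really does divide $8$ in every sub-case, and not merely some larger divisor of $p-1$.
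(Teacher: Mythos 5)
Your overall strategy is the paper's own: show $C(P)=P$, so that $Q:=N(P)/P$ is cyclic of order $q$ dividing $p-1$; place $Q$ inside $H$; derive $n_i\equiv 0,1$ or $2\pmod q$ from the $Q$-orbit structure on $\Omega$; and finish with the Euclidean computation of $\gcd(p-1,\,4a+2-\epsilon)$ for $\epsilon=0,1,2$, which matches the paper's and is correct. But your first step has a genuine gap. An element $y\in C_G(P)$ need not fix each $P$-orbit $\Omega_i$ setwise: the three $P$-orbits are permuted by $N(P)$, so $y$ ``restricts to $\Omega_i$'' only after you have ruled out the nontrivial permutations of the orbits. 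Your diagonal argument therefore does not apply to an involution interchanging $\Omega_1$ and $\Omega_2$ compatibly with the $P$-action (which centralises $P$ and has $p$ transpositions and $p$ fixed points), nor to a $3p$-cycle whose $p$-th power generates $P$. These are exactly the two configurations the paper must exclude, and they are why the hypothesis that $G$ contains no odd permutations is needed (the involution above is an odd permutation, $p$ being odd), together with Schur's theorem (a primitive group containing a regular $3p$-cycle is $2$-fold transitive, contradicting Type II). Your proposal never invokes the parity hypothesis at the point where it is indispensable, which is the tell-tale sign of the gap. A milder instance of the same oversight occurs in your second step: $3\nmid q$ only forces the image of $N(P)$ in $\mathrm{Sym}\{\Omega_0,\Omega_1,\Omega_2\}$ to have order $1$ or $2$, so $N(P)$ may fuse two $P$-orbits into one orbit of length $2p$; then $\tau$ has an orbit of length $2$ in place of two extra fixed points, though the congruence $n_i\equiv 0,1,2\pmod q$ survives in either case.

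The reduction for the second statement must also come first, not last. If $G$ contains odd permutations, then $C(P)$ may genuinely have order $2p$, and $N(P)/P$ is then not cyclic of order dividing $p-1$, so your steps one to three do not apply to $G$ at all. The paper passes to $G\cap A_{3p}$, observes that it is still primitive and of Type II, applies the first statement to it to get the bound $8$, and multiplies by the index $2$. Your closing remark that the congruences ``see only the part of $L$ whose lifts are even permutations'' is not an accurate description of what the argument does and does not repair this.
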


\begin{proof}
The second statement follows from the first since if $G$ contains odd
permutations then $G\cap A_{3p}$ is a subgroup of index~$2$ in $G$ which must
be primitive (cf.~Tamaschke's remark, 3.3 above; or compare~\S11), and of
Type~II. So let us suppose that $G$ contains no  odd permutations. Then
the centraliser $C(P)$ is $P$, for, if $C(P)$ contained a cycle of length
$3p$ then $G$ would be $2$-fold transitive by a theorem of Schur (see
\cite{r20},~p.65), and if $C(P)$ had order $2p$ then it would contain an
element of order~$2$ having $p$ fixed points and $p$ transpositions, an
odd permutation. Since $C(P)=P$ we know that $N(P)=P.Q$ where $Q$ is a
cyclic group of order $q$, say, $q$ divides $p-1$ and $Q$ acts faithfully
by conjugation as a group of automorphisms of $P$. A consequence of the
theorem of \S8 is that $p\equiv2$~mod~$3$, hence $3\nmid |N(P)|$ and $N(P)$
cannot be transitive on $\Omega$. Therefore $N(P)$ either has one orbit of
length $p$ and one of length $2p$, or three orbits of length $p$ in $\Omega$.
In either case $q$ is a stabiliser for an $N(P)$ orbit of length~$p$, and so
we may suppose that $Q\le H$. Either $Q$ has a further $2$ fixed points, or it
has an orbit of length~$2$; in both cases the remaining $3p-3$ points fall into
$Q$-orbits of length~$q$. A non-trivial $H$-orbt is a union of $Q$-orbits,
and it follows that
\[|\Gamma|\equiv 0, 1 \hbox{ or }2\hbox{ mod }q.\]
We know also that $p-1\equiv0$~mod~$q$. In case (i) (see \S8) we may take
$|\Gamma|=p+4a+1$, and we have\footnote{$(b,c)$ denotes the highest common
factor of $b$ and $c$.}
\[\begin{array}{lcl}
&&q\hbox{ divides }(48a^2+34a+6,48a^2+30a+4)\\[1mm]
\hbox{or}&\qquad&q\hbox{ divides }(48a^2+34a+5,48a^2+30a+4)\\[1mm]
\hbox{or}&\qquad&q\hbox{ divides }(48a^2+32a+4,48a^2+30a+4).
\end{array}\]
The Euclidean algorithm and a little computation yield that $q$ divides
$2$, $1$ and $(2a+4,8)$ respectively. That is, $q\mid 8$, and if $a$ is odd
then $q\mid2$. Similarly, in case (ii) we may take $|\Gamma|=p-4a-3$, and
we find that $q$ divides $2$, $1$ or $(8,2a+6)$. That is, $q\mid8$, and if 
$a$ is even thatn $q\mid2$. This proves the theorem.
\end{proof}

If $|N(P):P|=2$ it is not hard to show that $p=5$ and $G$ is $A_6$ as in
Example~1 (see also Ito~\cite{r13}). Thus we may in fact suppose that for
groups of type II(i) $a$ is even, and for groups of type II(ii) $a$ is odd.
However, I would guess that if $|N(P):P|=4$ then $G$ is $S_6$ as in
Example~1, and that groups of Type II with $|N(P):P|=8$ or~$16$ do not exist.

\section{A proof by Dr B. J. Birch}

The proof of Theorem~\ref{t92} in the case where all orbits are self-paired
eluded me for some time. Equations (9.7) and inequality (9.6) have two 
infinite families of solutions. One of these arises in case $\nu_1=-\nu_2=\nu$,
$\nu_3=-1$, in which case $p=\nu^2$, solutions of which are of no interest to
us since $p$ is a prime. The other is the family of solutions found in \S9,
where $p=3a^2+3a+1$, $\nu_1=2a+1$, $\nu_2=\nu_3=-a-1$. In fact, as Mr 
J.~E.~Stoy showed by computation, (9.6) and (9.7) do have other solutions,
but these appear to be relatively few and far between. At this point Dr Birch
proved for me that other soutions are indeed rare, and as his argument may
well extend for groups of degree $kp$ with $k>3$, I reproduce the relevant
part of his letter here.

``You consider the equations
\begin{equation}
\left.\begin{array}{rcl}
\gamma_1+\gamma+2+\gamma_3+1 &=& 0 \\[1mm]
\sqrt{4p-1-3\gamma_1^2}+\sqrt{4p-1-3\gamma_2^2} &=& \sqrt{4p-1-3\gamma_3^2}\\[1mm]
\hbox{with }4p-1 &\ge& 3\max(\gamma_i^2).\end{array}\right\}
\end{equation}
Removing square roots, one gets
\begin{equation}
\left.\begin{array}{rcl}
4p-1 &=& \gamma_1^2+\gamma_2^2+\gamma_3^2+2T \\
\hbox{with }T &\ge& 0
\end{array}\right\}
\end{equation}
and
\begin{equation}
\left.\begin{array}{c}
T^2=\gamma_1^4+\gamma_2^4+\gamma_3^4-\gamma_1^2\gamma_2^2-\gamma_2^2\gamma_3^2
-\gamma_3^2\gamma_1^2 \\
\gamma_1+\gamma_2+\gamma_3=0.
\end{array}\right\}
\end{equation}
Note that (3) is soluble with any two of $\gamma_1^2$, $\gamma_2^2$,
$\gamma_3^2$ equal -- leading to your two \emph{families} of solutions.

Suppose from now on that $|\gamma_1|>|\gamma_2|>|\gamma_3|>0$, so that
$\gamma_2$, $\gamma_3$, $\gamma_2-\gamma_3$, $\gamma_2+\gamma_3$ all have the
same sign (allowing $|\gamma_1|=|\gamma_2|$ or $|\gamma_2|=|\gamma_3|$ if
you like]. Write $\gamma_2+\gamma_3=S$, $\gamma_2-\gamma_3=D$. Then
\begin{eqnarray*}
4T^2 &=& \left(2\gamma_1^2-\gamma_2^2-\gamma_3^2\right)^2
+3\left(\gamma_2^3-\gamma_3^2\right)^2 \\
&=& \left(2(S+1)^2-\textstyle{\frac{1}{2}}S^2-\textstyle{\frac{1}{2}}D^2
\right)^2 +3S^2D_2.
\end{eqnarray*}
\[[2T+\{2(S+1)^2-\textstyle{\frac{1}{2}}(S^2+D^2)\}]
[2T-\{2(S+1)^2-\textstyle{\frac{1}{2}}(S^2+D^2)\}]=3S^2D^2\]
\begin{equation}
4(S+1)^2-S^2-D^2=P-Q\hbox{ with }PQ=3S^2D^2.
\end{equation}
Write $P=XY$ with $X\mid 3S^2$, $Y\mid D^2$; then $Q=ZT$ with $XZ=3S^2$,
$YT=D^2$, and we get
\begin{equation}
-8S-4=XZ-YT-XY+ZT=(X+T)(Z-Y).
\end{equation}
Take any large $N_0$ and small $\epsilon>0$. If now $p<N_0$, then by (2)
$S^2<\frac{3}{2}N_0$ (approximately). By (5) and $XZ=3S^2$, for fixed $S$
there are only $O(S^\epsilon)$ possibilities for $X,Y,Z,T$; so there are only
$O(N_0^{\frac{1}{2}+\epsilon})$ primes $p<N_0$ for which (1) is soluble.

In fact `sporadic' solutions of (4) appear to be extremely rare -- but I 
don't know how to prove this.'' 

\section{Tabulation of results}

The first of the following tables summarises the results of Part II. The
remaining ones give the eigenvalues of the basic adjacency matrices for every
case in which equations (6.6)--(6.9) are soluble. In these tables the columns
are indexed by the degrees of the irreducible constituents of $\pi$; the rows
are indexed by the basic adjacency matrices. The multiplication constants are
not tabulated; they can be calculated directly from the tables of eigenvalues
by means of (6.8).

\setcounter{table}{0}

\begin{landscape}

\begin{table}[htbp]
\[\begin{array}{|r|c|c|c|c|c|l|}
\hline
\hbox{Type} & p & & \hbox{rank} & \hbox{degrees of irreduc-} &
\hbox{subdegrees} & \hbox{Comments} \\
&&&& \hbox{ible constituents of $\pi$} & & \\[1mm]
\hline\hline
\hbox{VII (i)} & 7 & & & & 1,4,8,8 &
\hbox{$\PSL(2,7)$ -- Example 3} \\[1mm]
\hbox{(ii)} & 19 & & 4 & 1,p+1,p-1,p-1 &
1,6,20,30 & \hbox{$\PSL(2,19)$ -- Example 4} \\[1mm]
\hbox{(iii)} & 31 & & & & 1,20,32,40 & ? \\[1mm]
\hline
\hbox{II (i)} & 48a^2+30a+5 & a\ge0 & & &
1,2(8a+3)(3a+1),8(4a+1)(3a+1) & \hbox{$A_6,S_6$ -- Example 1, are} \\
& & \hbox{$a$ even} & & & & \hbox{examples for $a=0$} \\[1mm]
& & & 3 & 1,p,2p-1 & & \hbox{$N(P)$ is small, see \S12} \\[1mm]
\hbox{(ii)} & 48a^2+66a+23 & a\ge0 & & & 
1,2(8a+5)(3a+2),8(4a+3)(3a+2) & \\
& & \hbox{$a$ odd} & & & & \\[1mm]
\hline
\hbox{III (i)} & 3a^2+3a+1 & a\ge2 & & & 1,a(3a+1),2(a+1)(3a+1) &
\hbox{For $a=1$, $a_{111}=-1$, which} \\
& & & & & & \hbox{is not possible} \\[1mm]
& & & 3 & 1,2p,p-1 & & \\
\hbox{(ii)} & 3a^2+3a+1 & a\ge1 & & & 1,(a+1)(3a+2),2a(3a+2) &
\hbox{$A_7,S_7$ as in Example 2} \\
& & & & & & \hbox{arise for $a=1$} \\[1mm]
\hline
\hbox{IV (i)} & 3a^2+3a+1 & a\ge1 & & & 1,a(3a+1),(a+1)(3a+1),(a+1)(3a+1) &
\Delta_3=\Delta_2^*\hbox{\ \ I know no} \\
& & \hbox{$a$ even} & & & & \phantom{\Delta_3=\Delta_2^*}\hbox{\ \ examples of
any} \\
\hbox{(ii)} & 3a^2+3a+1 & a\ge1 & 4 & 1,p,p,p-1 & 1,(a+1)(3a+2),a(3a+2),a(3a+2) 
& \Delta_3=\Delta_2^*\hbox{\ \ of these kinds} \\
& & & & & & \phantom{\Delta_3=\Delta_2^*}\hbox{\ \ of group} \\[1mm]
\hbox{(iii)} & 3a^2+3a+1 & a\ge1 & & & 1,(a+1)(3a+2),a(3a+2),a(3a+2) & 
\hbox{All suborbits}\\
&&&&&& \hbox{are self-paired}\\[1mm]
\hline
\end{array}\]
\caption{\label{tab1}Summary of primitive groups of degree~$3p$}
\end{table}
\end{landscape}

\begin{table}[htbp]
\[\begin{array}{|c|cccc|}
\hline
B_i\setminus f_i & 1 & 8 & 6 & 6 \\[1mm]
\hline
B_0 & 1 & 1 & 1 & 1 \\[1mm]
\hline
B_1 & 4 & -2 & 1+\sqrt{2} & 1-\sqrt{2} \\[1mm]
\hline
B_2 & 8 & -1 & -2\sqrt{2} & 2\sqrt{2} \\[1mm]
\hline
B_3 & 8 & 2 & -2+\sqrt{2} & -2-\sqrt{2} \\[1mm]
\hline
\end{array}\]
\caption{\label{tab142}Type VII(i), $p=7$}
\end{table}

\begin{table}[htbp]
\[\begin{array}{|c|cccc|}
\hline
B_i\setminus f_i & 1 & 20 & 18 & 18 \\[1mm]
\hline
B_0 & 1 & 1 & 1 & 1 \\[1mm]
\hline
B_1 & 6 & -3 & \frac{1}{2}(3+\sqrt{5}) & \frac{1}{2}(3-\sqrt{5}) \\[1mm]
\hline
B_2 & 20 & -1 & -2\sqrt{5} & 2\sqrt{5} \\[1mm]
\hline
B_3 & 30 & 3 & \frac{1}{2}(-5+3\sqrt{5}) & \frac{1}{2}(-5-3\sqrt{5}) \\[1mm]
\hline
\end{array}\]
\caption{\label{tab143}Type VII(ii), $p=19$}
\end{table}

\begin{table}[htbp]
\[\begin{array}{|c|cccc|}
\hline
B_i\setminus f_i & 1 & 32 & 30 & 30 \\[1mm]
\hline
B_0 & 1 & 1 & 1 & 1 \\[1mm]
\hline
B_1 & 32 & -1 & 4\sqrt{2} & -4\sqrt{2} \\[1mm]
\hline
B_2 & 20 & 5 & -3-\sqrt{2} & -3+\sqrt{2} \\[1mm]
\hline
B_3 & 40 & -5 & 2-3\sqrt{2} & 2+3\sqrt{2} \\[1mm]
\hline
\end{array}\]
\caption{\label{tab144}Type VII(iii), $p=31$}
\end{table}

\clearpage

\begin{table}[htbp]
\[\begin{array}{|c|ccc|}
\hline
B_i\setminus f_i & 1 & 48a^2+30a+5 & 96a^2+60a+9 \\[1mm]
\hline
B_0 & 1 & 1 & 1 \\[1mm]
\hline
B_1 & 2(8a+3)(3a+1) & -8a-3 & 4a+1 \\[1mm]
\hline
B_2 & 8(4a+1)(3a+1) & 8a+2 & -4a-2 \\[1mm]
\hline
\end{array}\]
\caption{\label{tab145}Type II(i), $p=48a^2+30a+5$}
\end{table}

\begin{table}[htbp]
\[\begin{array}{|c|ccc|}
\hline
B_i\setminus f_i & 1 & 48a^2+66a+23 & 96a^2+132a+45 \\[1mm]
\hline
B_0 & 1 & 1 & 1 \\[1mm]
\hline
B_1 & 2(8a+5)(3a+2) & 8a+5 & -4a-3 \\[1mm]
\hline
B_2 & 8(4a+3)(3a+2) & -8a-6 & 4a+2 \\[1mm]
\hline
\end{array}\]
\caption{\label{tab146}Type II(ii), $p=48a^2+66a+23$}
\end{table}

\begin{table}[htbp]
\[\begin{array}{|c|ccc|}
\hline
B_i\setminus f_i & 1 & 6a^2+6a+2 & 3a^2+3a \\[1mm]
\hline
B_0 & 1 & 1 & 1 \\[1mm]
\hline
B_1 & a(3a+1) & a & -2a-1 \\[1mm]
\hline
B_2 & 2(a+1)(3a+1) & -a-1 & 2a \\[1mm]
\hline
\end{array}\]
\caption{\label{tab147}Type III(i), $p=3a^2+3a+1$}
\end{table}

\begin{table}[htbp]
\[\begin{array}{|c|ccc|}
\hline
B_i\setminus f_i & 1 & 6a^2+6a+2 & 3a^2+3a \\[1mm]
\hline 
B_0 & 1 & 1 & 1 \\[1mm]
\hline
B_1 & (a+1)(3a+2) & -a-1 & 2a+1 \\[1mm]
\hline
B_2 & 2a(3a+2) & a & -2a-2 \\[1mm]
\hline
\end{array}\]
\caption{\label{tab148}Type III(ii), $p=3a^2+3a+1$}
\end{table}

\begin{table}[htbp]
\[\begin{array}{|c|cccc|}
\hline
B_i\setminus f_i & 1 & 3a^2+3a+1 & 3a^2+3a+1 & 3a^2+3a \\[1mm]
\hline
B_0 & 1 & 1 & 1 & 1 \\[1mm]
\hline
B_1 & a(3a+1) & a & a & -2a-1 \\[1mm]
\hline
B_2 & (a+1)(3a+1) & \alpha & \beta & a \\[1mm]
\hline
B_3 & (a+1)(3a+1) & \beta & \alpha & a \\[1mm]
\hline
\end{array}\]
\caption{\label{tab149}Type IV(i), $p=3a^2+3a+1$, $a$ even\\
$\alpha+\beta=-a-1$\qquad$\alpha\beta=\frac{1}{2}(5a+2)(a+1)$}
\end{table}

\begin{table}[htbp]
\[\begin{array}{|c|cccc|}
\hline
B_i\setminus f_i & 1 & 3a^2+3a+1 & 3a^2+3a+1 & 3a^2+3a \\[1mm]
\hline
B_0 & 1 & 1 & 1 & 1 \\[1mm]
\hline
B_1 & (a+1)(3a+2) & -a-1 & -a-1 & 2a+1 \\[1mm]
\hline
B_2 & a(3a+2) & \alpha & \beta & -a-1 \\[1mm]
\hline
B_3 & a(3a+2) & \beta & \alpha & -a-1 \\[1mm]
\hline
\end{array}\]
\caption{\label{tab1410}Type IV(ii),(iii), $p=3a^2+3a+1$\\
Type IV(ii): $a$ is odd and $\alpha+\beta=a$, $\alpha\beta=\frac{1}{2}a(5a+3)$\\
Type IV(iii): $a$ is even and $\alpha+\beta=a$, $\alpha\beta=-\frac{1}{2}a(4a+3)$}
\end{table}

\clearpage

\bigskip\bigskip\noindent
The Queen's College, Oxford\\
January, 1969

\bigskip\bigskip\noindent
\hrule

\subsection*{Afterword}

By Peter J. Cameron, University of St Andrews

As said above, this paper was written in the academic year 1968--69. For 
various reasons, including the fact that other mathematicians (Olaf
Tamaschke and Leonard Scott) had proved overlapping results, it was never
published. I would like to explain why I am making it available now.
There are three main reasons.

\paragraph{Historical significance} The paper was written at a time when 
mathematical objects from three different areas (association schemes in
statistics, coherent configurations in permutation groups, and cellular
algebras in computer science), were being recognised as essentially the same
thing, and to provide a framework for using combinatorial methods in the
study of permutation groups. It gives a very good introduction to this
material and would have been very influential had it been published at the
time. As it was, its effect was indirect: for example, I read it as a doctoral
student, and the ideas were present in my thesis.

\paragraph{Mathematical content} The arguments are much more difficult and
subtle than those used by Wielandt, and the mathematics there which could be
valuable in other contexts.

\paragraph{Mathematical significance} The main theorem of the paper has been
subsumed by results proved using the Classification of Finite Simple
Groups (which was still in the future when the paper was written): for
example, we now have a classification of primitive groups of odd degree%
\footnote{W. M. Kantor, Primitive permutation groups of odd degree, and an
application to finite projective planes, \textit{J. Algebra} \textbf{106}
(1987), 15--45; Martin W. Liebeck and Jan Saxl, The primitive permutation
groups of odd degree, \textit{J. London Math. Soc.} (2) \textbf{31} (1985),
250--264.}. But it has other implications which have not been drawn out. It is
known that Wielandt's arguments can be used to prove a purely combinatorial 
result, concerning strongly regular graphs with prescribed eigenvalue
multiplicities\footnote{P. J. Cameron and J. H. van Lint, \textit{Designs,
Graphs, Codes and their Links}, London Math. Soc. Student Texts \textbf{22},
Cambeidge University Press, Cambridge, 1991, Theorem~2.20.}, and it is likely
that those in this paper will have similar implications for strongly regular
graphs and, more generally, for association schemes. So new mathematics will
result from the paper, and it will be important to have it available to
researchers.

\medskip

As a fourth, more personal reason, I want it to be a tribute to
Peter Neumann, an important mathematician and historian of mathematics,
an inspiring teacher, and a fine and generous person. I was his sixth
doctoral student and regard myself as in his debt.

\medskip

I have re-typed the paper in \LaTeX\ from a scan of the original typescript. In
a couple of places the text was illegible; specifically footnote 2 on page~12
of the present document and the surrounding text (where I have made a guess as
to what was written) and footnote 5 on page~28 (which was cut off in the scan
but seems to me to be unnecessary).

\medskip

I am grateful to Leonard Scott for providing me with a scan of the paper and
of related historical material on his proposed collaboration with Peter
Neumann (which did not materialise), to Sylvia Neumann for giving permission
to post this paper, and to Cheryl E. Praeger and others for encouraging me to
do so.

\paragraph{Final note} This second arXiv version corrects some typographic
errors (both by Neumann in the original and by me while re-typing). I am
very grateful to Marina Anagnostopoulou-Merkouri for spotting these.
\end{document}